\definecolor{darkgreen}{HTML}{3CB50F}
\theoremstyle{plain}
\newtheorem{theorem}{Theorem}[section]
\newtheorem{cor}[theorem]{Corollary}
\newtheorem{lem}[theorem]{Lemma}
\newtheorem{prop}[theorem]{Proposition}
\theoremstyle{definition}
\newtheorem{definition}[theorem]{Definition}
\newtheorem{conj}[theorem]{Conjecture}
\newtheorem{rmk}[theorem]{Remark}
\theoremstyle{theorem}
\newtheorem*{thm1}{Theorem A}
\newtheorem*{thm2}{Theorem B}
\newtheorem*{coro}{Corollary}
\theoremstyle{remark}
\newcommand{\ZZ}{\mathbb{Z}}
\newcommand{\PP}{\mathbb{P}}
\newcommand{\EFF}{\mathrm{Eff}}
\newcommand{\Mov}{\mathrm{Mov}}
\newcommand{\MOV}{\mathrm{Mov}}
\newcommand{\OO}{\mathcal O}
\newcommand{\OOT}{\mathcal{O}_{\mathbb{P}^2}}
\newcommand{\Hom}{\operatorname{Hom}}
\def\Hom{{\rm Hom}\,}
\def\PP{{\mathbb P}}
\pgfplotsset{compat=1.15}
\begin{document}

\title{Minimal free resolutions of sheaves on the projective plane and the stable base locus decomposition of their moduli spaces} 
\author{Manuel Leal}
\author{C\'esar Lozano Huerta}
\author{Tim Ryan}


\address{Universidad Nacional Aut\'onoma de M\'exico\\
Instituto de Matem\'aticas, Unidad Oaxaca \\
Oaxaca, Mex.}
\email{maz.leal.camacho@gmail.com}
\email{lozano@im.unam.mx}

\address{University of Michigan\\
Department of mathematics \\
Ann Arbor, USA.}
\email{rtimothy@umich.edu} 
\keywords{Moduli of sheaves on the plane, Hilbert scheme of points on the plane, minimal free resolutions.}

\subjclass[2010]{14J60 (Primary); 13D02, 14E30 (Secondary)}

\date{October 2021.}

\maketitle
\begin{abstract}
The purpose of this paper is to incorporate minimal free resolutions into the study of the birational geometry of the moduli space of coherent sheaves on the plane with character $\xi$, denoted by $M(\xi)$. We show that it is possible to recover the relevant Bridgeland destabilizing object from the minimal free resolution in order to compute the effective cone $\EFF(M(\xi))$ and conjecture a full relationship between Bridgeland destabilizing objects and minimal free resolutions. Moreover, we also prove that minimal free resolutions, paired with interpolation for vector bundles, yield the movable cone of the Hilbert scheme of $n$ points on the plane $\PP^{2[n]}$, for certain values of $n$. We propose a program that computes the full stable base locus decomposition of $\PP^{2[n]}$ based on free resolutions and interpolation. We exhibit that this programs yields correct answers for small values of $n$. 

 \end{abstract}

\bigskip
\section{Introduction}

\medskip\noindent
The purpose of this paper is to incorporate minimal free resolutions into the study of the birational geometry of the moduli space of coherent sheaves on the plane with character $\xi$, denoted by $M(\xi)$. There are at least two approaches to study the stable base locus decomposition of $M(\xi)$: Bridgeland stability and interpolation, and this paper shows how the minimal free resolution interacts with both of them.

\medskip\noindent
The minimal free resolution of a coherent sheaf on the plane is a classical tool in algebraic geometry and commutative algebra and its main two ingredients are a Betti diagram and a matrix with polynomial entries. 
It is intuitively clear that there should be a relationship between these resolutions and the stable base locus decomposition (SBLD) of the effective cone $\EFF(M(\xi))$. 
However, thus far, the exact relationship has remained unclear and recent work has used Bridgeland stability conditions, via destabilizing objects, to compute the SBLD of $\EFF(M(\xi))$ without any reference to minimal free resolutions.
We then ask:

\medskip\noindent
\textbf{Question 1:} \textit{Is it possible to recover the relevant Bridgeland destabilizing objects from the minimal free resolutions of the elements in $M(\xi)$ to compute the stable base locus decomposition of $\EFF(M(\xi))$?}

\medskip\noindent
Our first result, Theorem A, starts the study of this question and asserts that it is indeed possible to recover the relevant Bridgeland destabilizing object from the minimal free resolution in order to compute the effective cone $\EFF(M(\xi))$. Further, if $\PP^{2[n]}$ denotes the Hilbert scheme of $n$ points on the plane, then Theorem B exhibits that it is also possible to recover the Bridgeland destabilizing objects from minimal free resolutions to get the movable cone $\MOV(\PP^{2[n]})$, for certain (infinite) values of $n$. Moreover, if we examine $\PP^{2[n]}$, with $n=2,3,4,5,6,12$, then the answer to Question 1 is affirmative as we see in Section \ref{section5}. Section \ref{section5} also provides a precise conjecture to answer Question 1.

\medskip\noindent
From a more classical viewpoint, one may wish to recover the SBLD of $\EFF(M(\xi))$ directly from the minimal free resolutions without needing to appeal to Bridgeland stability.  This can be achieved via the interpolation problem for vector bundles stated later in this introduction.

\medskip\noindent
In \cite{CHW}, the bundles that solve this interpolation problem, called interpolating bundles, are used to describe the edges of the effective cone $\EFF(M(\xi))$. For this description, a resolution of a general sheaf $U\in M(\xi)$ in terms of higher rank vector bundles is of key importance. Naturally, one can ask if one can replace the use of vector bundle resolutions by minimal free resolutions:

\medskip\noindent
\textbf{Question 2:} \textit{Is it possible to use interpolating bundles and the minimal free resolutions of the elements in $M(\xi)$ to compute the stable base locus decomposition of $\EFF(M(\xi))$?}

\medskip\noindent
Theorem A and the corollaries below partially answer Question 2. Indeed, they describe the stable base locus of the (primary) extremal chamber of $\EFF(M(\xi))$ when such base locus is divisorial. Furthermore, Theorem B asserts that also the movable cone of $\PP^{2[n]}$, for certain values of $n$, can be described from the minimal free resolution and interpolation; which contributes to answering Question 2 in the affirmative. In Section \ref{section5}, we exhibit that the answer to Question 2 is affirmative for $\PP^{2[n]}$ when $n=2,3,4,5,6,12$. In order to carry out these computations of the SBLD systematically, we propose an \textit{interpolation program}, which we describe later in this introduction and which could be applied more generally.

\begin{center} 
   \includegraphics[scale=.15]{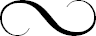} 
\end{center}

\medskip\noindent
Let us now describe our results in more detail. In doing so, we revisit work by Coskun, Huizenga and Woolf \cite{CHW} and reinterpret some of their results in terms of minimal free resolutions.  We now recall the description of $\EFF(M(\xi))$, by Coskun et al., in terms of the interpolation problem for vector bundles.

\medskip\noindent
\textbf{Interpolation problem for vector bundles:} Given a sheaf $U\in M(\xi)$, determine the infimum of slopes $\mu^+\in \mathbb{Q}$ such that
\begin{enumerate}
\item[(a)] $\mu(\xi)+\mu^+\ge 0$ and
\item[(b)] $U$ is cohomologically orthogonal to a vector bundle $V$ of slope $\mu^+$.
\end{enumerate}

\medskip\noindent
If $U\in M(\xi)$ is a general sheaf, then by solving the interpolation problem for $U$, one finds two bundles: the interpolating bundle $V$, cohomologically orthogonal to $U$; and in addition an exceptional bundle $E_{\alpha\cdot \beta}$, called the controlling exceptional bundle, which is determined from the character $\xi$. 
If $U\otimes V$ has no cohomology, then the Brill-Noether divisor $$D_V=\{F\in M(\xi)\ | \ h^1(F\otimes V)\ne 0\} $$ is an effective divisor which spans an extremal class in $\EFF(M(\xi))$. 
In order to show that $U\otimes V$ has in fact no cohomology, Coskun et al. use the so-called \textit{Gaeta triangle} of $U$, which is an element of the derived category $D^b(\PP^2)$ and, in their notation, can be written in one of the following forms
\begin{equation}\label{gaetatriangle}
    E^m_{-(\alpha\cdot\beta)}\rightarrow U\rightarrow W[1] \rightarrow \cdot,  \qquad  W \rightarrow U \rightarrow E_{-(\alpha\cdot\beta)-3}^{m}[1]\rightarrow \cdot, \qquad\text{ or } \qquad E^l_{-\beta} \rightarrow U\rightarrow E^k_{-\alpha-3}[1] \rightarrow \cdot
\end{equation}

\medskip\noindent
It follows that $U\otimes V$ has no cohomology if the tensor products of $V$ with each of the factors of the Gaeta triangle (\textit{e.g.}, $E_{-(\alpha\cdot\beta)} \otimes V$ and $W\otimes V$) has no cohomology, and this is what they proved. Therefore, $D_V$ is an effective divisor which spans an extremal class in $\EFF(M(\xi))$.

\medskip\noindent
Our first result is a dictionary between a Gaeta triangle of $U$ and the classical Gaeta minimal free resolution of $U$ (see Section \ref{preliminaries} for definitions). We show that a Gaeta triangle is equivalent to the Gaeta minimal free resolution with a map whose properties depend on the character of $U$. Let us phrase this result as follows and refer the reader to Theorem \ref{MAINdetailed} for the details.

\medskip\noindent
\begin{thm1} \label{thm: main}
A stable sheaf $U\in M(\xi)$ fits into a Gaeta triangle above (\ref{gaetatriangle}) if and only if its minimal free resolution has the Gaeta Betti diagram and, in addition, the map in it contains the map defining the minimal free resolution of the following:
\begin{enumerate}
    \item[(a)] the bundle $E_{-(\alpha \cdot \beta)}^m$ if $U$ fits into the triangle in \eqref{gaetatriangle} on the left,
    \item[(b)] the sheaf $W$ if $U$ fits into the triangle in \eqref{gaetatriangle} in the middle,
    \item[(c)] the bundle $E_{-\beta}^\ell$ if $U$ fits into the triangle in \eqref{gaetatriangle} on the right. 
\end{enumerate}
\end{thm1}

\medskip\noindent
The strategy to prove Theorem A is the following: we interpret a Gaeta triangle (\textit{e.g.}, $E_{-(\alpha\cdot \beta)}^{m}\rightarrow U\rightarrow W[1] \rightarrow\cdot $) as a map of complexes in $D^b(\PP^2)$ whose terms are sums of line bundles (namely, a map between the minimal free resolutions of $W$ and $E_{-(\alpha\cdot \beta)}$). The mapping cone of this  map yields a complex which involves only line bundles and we argue that this is, in fact, the minimal free resolution of $U$. 
Since a Gaeta triangle has one of three possible forms, the proof of this result has to analyze three possibilities, which we do in \S \ref{Positivecase}, \S \ref{NegativeCase}, and \S \ref{sec: exceptional case}.

\medskip\noindent
Theorem A allows us to study base loci of divisor classes. For example, if a general sheaf $U\in M(\xi)$ fits into the triangle in (\ref{gaetatriangle}) on the right $i.e.$, we are in the situation of Theorem A part (c), then we can write the minimal free resolution of a generic sheaf $F$ in the stable base locus of the (primary) extremal divisor class $J$ of the effective cone $\EFF(M(\xi))$. This is the content of the following corollary; we refer the reader to Corollary \ref{cor: EffRigid} for the details.

\begin{coro}
Suppose $U\in M(\xi)$ fits into the triangle in (\ref{gaetatriangle}) on the right. Then, the closure in $M(\xi)$ of the locus of sheaves which fail to contain $E_{-\beta}$ in its minimal free resolution forms an irreducible and reduced divisor which is the stable base locus of the primary extremal chamber of $\EFF(M(\xi))$.
\end{coro}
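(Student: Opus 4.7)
The plan is to use Theorem A(c) as a dictionary between the right-hand Gaeta triangle in \eqref{gaetatriangle} and minimal free resolutions, then combine it with the cohomological description of the primary extremal chamber from \cite{CHW} to identify the stable base locus. Recall from \cite{CHW} that in this case the primary extremal divisor class is spanned by the Brill--Noether divisor $D_V = \{F \in M(\xi) : h^1(F\otimes V) \ne 0\}$, and the stable base locus of the extremal chamber coincides set-theoretically with $D_V$. Thus the corollary reduces to showing that, for $F \in M(\xi)$, one has $F \in D_V$ if and only if $E_{-\beta}$ does not appear in the minimal free resolution of $F$, and to verifying the divisorial, irreducible, reduced structure of this locus.

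\medskip\noindent
For the forward direction, suppose that the minimal free resolution of $F$ has the Gaeta Betti diagram and contains the map defining the minimal free resolution of $E_{-\beta}^\ell$. By Theorem A(c), $F$ fits into a Gaeta triangle $E_{-\beta}^\ell \to F \to E_{-\alpha-3}^k[1] \to \cdot$. Tensoring this triangle with $V$ and applying the CHW cohomology vanishings $h^*(E_{-\beta}\otimes V)=0$ and $h^*(E_{-\alpha-3}\otimes V)=0$, the long exact sequence of cohomology forces $h^*(F\otimes V)=0$, so $F\notin D_V$ and therefore $F$ is not in the stable base locus.

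\medskip\noindent
The converse is the step I expect to be the main obstacle. Assuming that $E_{-\beta}$ does not appear in the minimal free resolution of $F$, Theorem A(c) precludes $F$ from fitting into the right-hand Gaeta triangle, and one must show that this failure actually forces $h^1(F\otimes V)\ne 0$. The route I would take is to compute the hypercohomology of $F\otimes V$ directly from the minimal free resolution, viewed as a complex of line bundles: tensor the complex with $V$, take the spectral sequence, and track how the missing $E_{-\beta}^\ell$-block controls a specific differential. The absence of this block should prevent the differential from being an isomorphism and thereby leave a surviving class in $h^1(F\otimes V)$. The subtle point is that ``fails to contain $E_{-\beta}$'' must be interpreted as a rank condition on the resolution matrix (not merely a Betti-number condition), and one must ensure this rank drop matches the cohomological jump exactly rather than cutting out a higher-codimension stratum.

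\medskip\noindent
Finally, for the divisorial, irreducible, reduced structure, I would work on the open stratum of $M(\xi)$ carrying the Gaeta Betti diagram, over which a universal minimal free resolution exists. The locus where the $E_{-\beta}^\ell$-block fails to be contained in the resolution map is cut out by the vanishing of a single maximal minor of the corresponding block of the universal resolution matrix, so it is a Cartier divisor. Irreducibility and reducedness of this determinantal condition are standard once the minor is shown to be non-constant and of the expected rank, and since $M(\xi)$ is irreducible with the Gaeta stratum open and dense, the closure in $M(\xi)$ inherits both properties. Combined with the two inclusions above, this identifies the divisor with the stable base locus of the primary extremal chamber.
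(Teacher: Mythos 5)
Your dictionary step (Theorem \ref{MAINdetailed}(c): the right-hand Gaeta triangle is equivalent to the minimal free resolution containing the $E_{-\beta}$-block) and your forward direction (tensor the triangle with $V$ and use cohomological orthogonality of $V$ with $E_{-\beta}$ and $E_{-\alpha-3}$ to conclude such sheaves avoid the base locus) agree with the paper's Corollary \ref{cor: EffRigid}. The genuine gap is in the other half. You assume, citing \cite{CHW}, that the stable base locus of the primary extremal chamber coincides set-theoretically with $D_V$; but \cite{CHW} only gives one inclusion (Corollary 7.2 there: sheaves admitting the resolution lie outside the base locus), and the converse --- that the divisor where the resolution fails is actually contained in the stable base locus of \emph{every} class in the chamber --- is precisely the nontrivial content, explicitly noted in the paper as not stated in \cite{CHW}. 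The paper supplies it by a different mechanism: the rational map $\pi$ from $M(\xi)$ to the Kronecker moduli space $Kr_N(m_1,m_2)$, $U\mapsto W$, is either undefined on $D_{E_{\alpha\cdot\beta}}$ or has positive-dimensional fibers there (coming from the extra $E_{-(\alpha\cdot\beta)}$-factors in the degenerate Beilinson spectral sequence), so this divisor is contracted and hence lies in the base locus of the whole chamber. Your proposal has no substitute for this contraction argument: even a set-theoretic identification of the locus with $D_V$ for the single interpolating bundle $V$ does not by itself show it is the base locus of the chamber, and your proposed converse (a hypercohomology spectral sequence in which the missing $E_{-\beta}$-block ``should'' leave a class in $h^1(F\otimes V)$) is only a sketch, which you yourself flag as the main obstacle.

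The structural claims also rest on an unsubstantiated determinantal argument. The locus of sheaves failing to contain $E_{-\beta}$ includes sheaves whose Betti diagram is not the Gaeta one, so it is not captured by working only over the open Gaeta-Betti-diagram stratum; the existence of a universal minimal free resolution over that stratum needs justification; and ``cut out by the vanishing of a single maximal minor'' is not established --- the condition is containment of a block up to row and column operations, which is not evidently a single minor vanishing, and its codimension, irreducibility and reducedness do not follow from the usual determinantal-variety facts without further work. The paper sidesteps all of this by quoting \cite[Proposition 5.9]{CHW} for divisoriality and extremality of $D_{E_{\alpha\cdot\beta}}$ and \cite[Theorem 7.3]{CHW} for irreducibility and reducedness, and then using Theorem \ref{MAINdetailed}(c) only to translate ``has no Gaeta triangle'' into ``the minimal free resolution fails to contain $E_{-\beta}$,'' i.e.\ $D_{E_{\alpha\cdot\beta}}=D_{Gaeta}$. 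If you want an argument independent of those two CHW results, you would need to prove their content, not just the translation.
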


\medskip\noindent
When the Gaeta triangle referred to in the previous corollary coincides with the Gaeta minimal free resolution, then the previous result reads as follows. This is the content of Corollary \ref{cor: DGaeta}.

\begin{coro}
If the Gaeta resolution of $U\in M(\xi)$ is pure, then the closure in $M(\xi)$ of the locus of sheaves whose Betti diagram is not that of Gaeta forms an irreducible and reduced divisor. 
\end{coro}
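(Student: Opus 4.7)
The plan is to deduce this corollary from the preceding one (Corollary \ref{cor: EffRigid}) by showing that, under the purity hypothesis, the two loci described in the two statements coincide. First I would unpack what a pure Gaeta resolution buys us. Purity forces the generators at each homological step to be concentrated in a single internal degree, so the Gaeta Betti diagram is supported on exactly two columns and admits no numerical cancellation; hence the shape of the resolution matches the two-term form underlying the right Gaeta triangle $E^\ell_{-\beta}\to U\to E^k_{-\alpha-3}[1]\to\cdot$ of \eqref{gaetatriangle}. By Theorem A(c), $U$ itself fits into such a right Gaeta triangle.

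Next I would prove that for any $F\in M(\xi)$,
\[
\text{the Betti diagram of }F\text{ equals the Gaeta diagram}\iff F\text{ fits into a right Gaeta triangle}.
\]
The $(\Leftarrow)$ direction is immediate from Theorem A(c). For $(\Rightarrow)$, purity means the Gaeta Betti numbers are already minimal among all resolutions of a sheaf with character $\xi$, so any $F$ realizing these numbers must have a minimal free resolution of the same pure two-column shape; applying Theorem A(c) to $F$ then yields the desired right Gaeta triangle for $F$, with an $E^\ell_{-\beta}$ summand in its resolution. Equivalently, after passing to closures, the locus of $F$ whose Betti diagram fails to be Gaeta is the same as the locus where $E_{-\beta}$ fails to appear in the minimal free resolution of $F$. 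Applying Corollary \ref{cor: EffRigid} to this common locus concludes the proof.

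The step I expect to require the most care is the $(\Rightarrow)$ direction just above, where one must rule out the possibility of a sheaf having the Gaeta Betti numbers numerically without genuinely fitting into the Gaeta triangle. Purity is exactly what excludes this pathology, since it prevents any cancellation of syzygies, and the argument can be made rigorous via the upper semicontinuity of Betti numbers in families: any specialization either preserves the pure shape (landing in the Gaeta triangle by Theorem A(c)) or strictly increases some Betti number, and only the latter locus contributes to the divisor in question. Everything else is a direct translation through Theorem A(c) and an appeal to Corollary \ref{cor: EffRigid}.
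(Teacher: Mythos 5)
Your argument follows the paper's own route: purity places $U$ in case (c) of Theorem \ref{MAINdetailed}, so the locus of sheaves whose Betti diagram is not that of Gaeta coincides with $D_{\tiny{Gaeta}}$, and Corollary \ref{cor: EffRigid}(b) then yields that this locus is an irreducible, reduced divisor. The only step you treat more lightly than the paper is the justification that purity really lands $\xi$ in case (c) --- namely that the controlling exceptional bundle is then a line bundle or the tangent bundle and $\chi(E_{-(\alpha\cdot\beta)},U)=0$, which is a numerical condition on $\xi$ and not a formal consequence of the two-column shape of the diagram alone --- while your semicontinuity remark is harmless but not needed.
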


\medskip\noindent
In the case $M(\xi)=\PP^{2[n]}$, we have a converse of the previous corollary; see Corollary \ref{cor: DGaeta2}. 

\begin{coro}
Let $n$ be such that the closure in $\PP^{2[n]}$ of the locus of sheaves whose Betti diagram is not that of Gaeta, called $D_{\tiny{Gaeta}}$, has codimension 1. Then, the Gaeta minimal free resolution is pure.
\end{coro}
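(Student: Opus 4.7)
I will argue the contrapositive: assume that the Gaeta minimal free resolution is not pure, and show that $D_{\mathrm{Gaeta}}$ must then have codimension zero in $\PP^{2[n]}$, contradicting the hypothesis. Non-purity means that some twist $\OO(-d)$ appears as a direct summand of both of the free modules in the Gaeta shape $F_{1} \to F_{0} \to I_{Z} \to 0$, which is precisely the configuration in which the standard cancellation lemma for minimal free resolutions applies.

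Concretely, a complex of free modules over the homogeneous coordinate ring of $\PP^{2}$ is a minimal free resolution of its cokernel if and only if every entry of every differential lies in the irrelevant maximal ideal; any constant entry forces an acyclic summand $\OO(-d) \xrightarrow{\sim} \OO(-d)$ to split off. Under the non-purity hypothesis, the block of the differential $F_{1} \to F_{0}$ between the two shared $\OO(-d)$ summands is a scalar, and for a general $I_{Z} \in \PP^{2[n]}$ this scalar is nonzero. Splitting off the resulting acyclic piece yields a strictly shorter complex with strictly smaller Betti numbers, and one checks that this shortened complex is the actual minimal free resolution of a general $I_{Z}$. It follows that a general $I_{Z}$ fails to have the Gaeta Betti diagram, so it lies in the locus defining $D_{\mathrm{Gaeta}}$; since $\PP^{2[n]}$ is irreducible, the closure is all of $\PP^{2[n]}$, giving $\mathrm{codim}\, D_{\mathrm{Gaeta}} = 0$, in contradiction with the hypothesis.

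The main subtlety will be to carry out the cancellation step uniformly in families: I need that the Gaeta shape is a genuine (possibly non-minimal) free resolution of a general $I_{Z}$, that the shared-twist scalar block is generically nonzero, and that the split-off complex is minimal. For this I would appeal to Theorem A applied in the Hilbert scheme setting, together with upper semicontinuity of Betti numbers on $\PP^{2[n]}$, which propagates the cancellation from the generic point to an open dense subset and ensures that the Betti diagram strictly drops below the Gaeta one on a nonempty open of $\PP^{2[n]}$.
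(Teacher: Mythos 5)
Your contrapositive is set up on a false premise, and the conclusion you derive from it is actually false, so the argument does not work. In this paper ``pure'' means that only \emph{two} distinct line bundle twists appear in the Gaeta resolution; ``not pure'' means \emph{three} distinct twists appear, e.g.
$0 \to \mathcal{O}_{\PP^2}(-d-2)^{a}\oplus\mathcal{O}_{\PP^2}(-d-1)^{b} \to \mathcal{O}_{\PP^2}(-d)^{c} \to \mathcal{I}_Z \to 0$
or its mirror, where the exponents are Euler characteristics whose signs are arranged (via the sign of $\chi(\mathcal{T}_{\PP^2}(-d-1),\mathcal{I}_Z)$) precisely so that no twist occurs on both sides. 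Non-purity therefore does \emph{not} produce a shared summand $\mathcal{O}(-d)$ in both free modules, there is no scalar block, and the cancellation lemma has nothing to act on: the Gaeta resolution is, by construction and by the classical fact the paper quotes, already the \emph{minimal} free resolution of the general sheaf. Consequently your conclusion that a general $\mathcal{I}_Z$ fails to have the Gaeta Betti diagram (so that $D_{\mathrm{Gaeta}}$ would have codimension $0$) contradicts this fact; if your argument were sound it would prove something false, which signals that the reduction itself is wrong.

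The correct contrapositive is different: assuming the Gaeta resolution is not pure, you must show that every locus of ideal sheaves with a \emph{non-Gaeta} Betti diagram has codimension at least $2$ in $\PP^{2[n]}$ (not codimension $0$), so that the non-Gaeta-Betti-diagram locus cannot be a divisor. This is where the real content lies, and it is how the paper argues: it classifies the ``divisorial Gaeta resolutions'' (the only non-Gaeta Betti diagrams whose strata have codimension $1$), observes that for the Hilbert scheme these occur only in the two families attached to pure Gaeta resolutions \eqref{pure1} and \eqref{pure2}, and concludes that when the Gaeta resolution is not pure no Betti-diagram jump happens in codimension $1$. To repair your proof you would need a dimension count for Betti strata (in the spirit of the dimension formula from \cite{BS}, as used in Lemma \ref{lem: resolution zeroes tangent}) showing that when three twists already appear generically, any degeneration of the Betti diagram imposes at least two conditions; semicontinuity of Betti numbers alone, and certainly the cancellation trick, will not give this.
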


\medskip\noindent
 The previous corollaries contribute to answering Question 2 in the affirmative as far as the edge of the effective cone is concern. In this same setting, we now investigate whether the inner walls of the SBLD of $\EFF(M(\xi))$ are also determined by information contained in the minimal free resolution.
A priory, there is no reason why this should be the case as there are many families that naturally arise in the geometry of sheaves that are not controlled by conditions on the minimal free resolution; see \cite{RYANLOZANOHUERTA} for examples of this type. 
However, we are able to prove the following.
 

\medskip\noindent
We call a positive integer \textit{triangular} if it is of the form $n=\tfrac{r(r+1)}{2}$; likewise, a positive integer of the form $n=2s(s+1)$ will be called a \textit{tangential} number. These are precisely the cases for which the Gaeta triangle in (\ref{gaetatriangle}) on the right coincides with the classical Gaeta resolution.

\medskip\noindent
\begin{thm2} 
Let $J$ be the primary extremal divisor of $M\left(1,0,-n\right)=\PP^{2[n]}$, with $n$ a triangular or a tangential number. 
Then the interpolation problem is solvable for a general $\Gamma \in J$ by a stable vector bundle $M$ whose presentation can be seen in Theorems \ref{TRI} and \ref{MOV}. 
Consequently, the Brill-Noether divisor $D_M$ spans a movable class, which in addition is extremal in the movable cone $\MOV(\PP^{2[n]})$.
\end{thm2}

\medskip\noindent
The previous statement is the content of Theorem \ref{TRI} and Theorem \ref{MOV}, where we also write down the class of the extremal divisor of $\MOV({\PP^{2[n]}})$. We prove these theorems by using Corollary \ref{cor: DGaeta} (the second corollary above) and by solving the interpolation problem for a general element in the the extremal divisor $J\in \PP^{2[n]}$. Indeed, we find the minimal slope bundle $M$ which is cohomologically orthogonal to a general $\Gamma\in J $, by looking at the minimal free resolution of the ideal sheaf $\mathcal{I}_{\Gamma}$. Consequently, the induced Brill-Noether divisor $D_{M}$ spans an effective class which does not have $J$ in its base locus; hence $D_M\in \MOV(\PP^{2[n]})$. We prove that $D_M$ is extremal in the movable cone by exhibiting a curve dual to $D_M$ which sweeps out an open set of $J$.  We observe that this procedure fits into the following elementary program.

\medskip\noindent 
\textbf{Interpolation Program:} In order to find the inner walls of the SBLD of $\EFF(\PP^{2[n]})$,  we may proceed as follows. Let $J\subset \PP^{2[n]}$ be the primary extremal divisor of $\EFF(\PP^{2[n]})$. Then,
\begin{enumerate}
\item Solve the interpolation problem for a generic point in the stable base locus of $J$ using its minimal free resolution. The outcome is a bundle $M$.
\item The Brill-Noether divisor induced by $M$, denoted $D_M$, spans a potential wall in the SBLD. 
\item The divisor $D_M$ has a base locus of its own, so we may repeat the item (1) replacing $J$ with $D_M$.
\end{enumerate}

\medskip\noindent
We run this program in Section 5 successfully and exhibit that both of our questions above are answered in the affirmative in the cases of the Hilbert scheme $\PP^{2[n]}$ with small values of $n$; in particular this program recovers some of the examples of \cite{ABCH}.

\medskip\noindent
A comment is in order about how feasible this program is. First, the stable base locus of a divisor is in general reducible. Then, the item (1) has to be done component by component. Second, the item (2) yields a class $D_M$ which does not have the points considered in item (1) in its base locus. To show that $D_M$ spans an actual wall in the SBLD, it suffices to exhibit a dual curve that sweeps out the component considered in item (1). Finding this curve may be challenging (see Remark \ref{eg12}). 
Third, and more pressing point, we do not know if after running this program we have exhausted all of the walls of the SBLD as we do not know if all such walls are determined by imposing conditions on the minimal free resolution. This is the content of our Question 2.

\medskip\noindent
\textbf{Bridgeland stability viewpoint:} The inner walls of the SBLD of $\PP^{2[n]}$ are known using Bridgeland stability conditions \cite{LZ,BM}. 
Answering Question 1 above then reduces to the existence of a dictionary between the destabilizing objects (triangles) and (some of the) minimal free resolutions. 
The Gaeta triangle yields one such destabilizing object (triangle) $F \rightarrow U$ and Theorem A establishes the relation between this object and the minimal free resolution of $U$. Indeed, in this language, Theorem A claims that if $U\in M(\xi)$ is general, then $U$ is $\zeta$-admissible (see Definition \ref{generalgaeta}), where $\zeta$ is the destabilizing character of $U$. 

\medskip\noindent
Theorem B also provides evidence that a destabilizing object that gives rise to the movable cone is contained in the minimal free resolution of sheaves. Section \ref{section5} works out explicit examples where this is again the case for all destabilizing objects that occur. 
In fact, Conjecture \ref{conj} aims at making the dictionary between destabilizing objects and minimal free resolutions precise and claims that one can recover the destabilizing objects from the minimal free resolutions in general.

\section*{organization of the paper} \noindent
In Section \ref{preliminaries} we include preliminaries on sheaves over the plane and their minimal free resolutions. We also include the notation of the objects in birational geometry we aim to study using free resolutions.
In Section \ref{sec: eff}, we prove Theorem A, which in this part of the paper is labeled as Theorem \ref{MAINdetailed}. 
Since a Gaeta triangle has three possible forms, we have to analyze three cases in order to prove Theorem \ref{MAINdetailed}; we do so in \S \ref{Positivecase}, \S \ref{NegativeCase}, and \S \ref{sec: exceptional case}.
In Section \ref{sec: mov}, we prove Theorem B. Notice that this result involves two cases: triangular and tangential numbers. Consequently, we prove Theorem B in two steps: Theorem \ref{TRI} and Theorem \ref{MOV}. We want to point out that the core ideas in proving these two theorems are distinct in each case. In Section \ref{section5}, we show that we can answer both Questions 1 and 2 for $\mathbb{P}^{2[n]}$ with $n=2,3,4,5,6,12$.


\bigskip
\section{Preliminaries}\label{preliminaries}
\medskip\noindent
This section recalls the notions about sheaves on $\PP^2$ that will be used throughout the paper.
We work over $\mathbb{C}$ throughout and all sheaves are coherent.

\subsection{The logarithmic Chern character}

\noindent
Let $U$ and $V$ be  coherent sheaves on $\PP^2$ with $U$ having Chern character $(r,\mathrm{ch}_1,\mathrm{ch}_2)$ and $r>0$.
The \textit{slope} and \textit{discriminant} of $U$  are defined to be $\mu(U)=\frac{\mathrm{ch}_1}{r} \;\text{ and }\; \Delta(U) =\frac{1}{2}\mu(U)^2-\frac{\mathrm{ch}_2}{r}$.
The slope and discriminant are logarithmic Chern classes in the sense that \[\mu(U\otimes V) = \mu(U) +\mu(V) \qquad \text{ and }\qquad \; \Delta(U\otimes V) = \Delta(U) +\Delta(V).\] Similarly, for the dual bundle $U^*$ we have \[\mathrm{rk}(U^*) =\mathrm{rk}(U), \qquad \mu(U^*) = -\mu(U),\qquad \text{ and } \qquad \Delta(E^*) = \Delta(E).\] 
Then, we will abuse notation and refer to $(r,\mu,\Delta)$ as the log Chern character of a sheaf $U$ of positive rank. 

\medskip\noindent
A coherent, torsion-free sheaf is called \textit{slope (semi-)stable} if all proper nontrivial subsheaves have smaller (or equal) slope. 

\subsection{Euler Characteristic of sheaves}
\medskip\noindent
By the Riemann-Roch formula, the Euler characteristic of $U$ is 
\[ \chi(U) = \sum_{i=0}^2(-1)^i h^i\left(\mathbb{P}^2,U\right) = \mathrm{rk}(U)\left(\frac{1}{2}\left(\mu(U)+1\right)\left(\mu(U)+2\right)-\Delta(U)\right).\]

\medskip\noindent
Similarly, the \textit{relative Euler characteristic} of two sheaves $U$ and $V$ is 
\begin{align}\label{CHI} 
\begin{split}
\chi(U,V) &= \sum_{i=0}^2(-1)^i \mathrm{ext}^i\left(U,V\right) \\         &=\mathrm{rk}(U)\mathrm{rk}(V)\left(\frac{1}{2}\left(\mu(V)-\mu(U)+1\right)\left(\mu(V)-\mu(U)+2\right)-\Delta(U)-\Delta(V)\right).
\end{split}
\end{align}

\medskip\noindent
Since the relative Euler characteristic depends only on the rank, slope, and discriminant of the corresponding sheaves, it induces a bilinear pairing $(\xi,\zeta):=\chi(\xi^*,\zeta) = \chi(\xi \otimes \zeta)$ in the Grothendieck group $K_0(\PP^2)\otimes \mathbb{R}\cong \mathbb{R}^3$.  
We will use $\chi(\xi^*,\zeta)$ and $\chi(\xi \otimes \zeta)$ interchangeably, but take the convention of using $\chi(\xi \otimes \zeta)$ when considering interpolation and $\chi(\xi^*,\zeta)$ otherwise. 
Using this pairing, we may consider the set of characters which are orthogonal to the character $\xi$, which we denote by $\xi^{\perp}$. 
Denoting the moduli space of coherent sheaves with Chern character $\xi$ by $M(\xi)$, the Picard group of $M(\xi)$ can be identified with $\xi^{\perp}$ as long as $\Delta(\xi)$ is sufficiently large.
More precisely, if $\Delta(\xi)$ is above the Dr\'ezet-Le Potier curve, see \cite{DL}, then
\[\mathrm{Pic}(M(\xi))\cong \{\zeta \in K_0(\PP^2)\ | \ (\xi,\zeta)=0 \}.\]

\medskip\noindent
As orthogonality does not depend on the rank of the bundle, but only on its slope and discriminant, we can then visualize $\xi^{\perp}$ in the $(\mu,\Delta)$-plane, where it forms a parabola.

\subsection{Exceptional bundles}\label{Exceptional bundles}
\medskip\noindent
The geometry of coherent semi-stable sheaves on the plane is largely controlled by exceptional vector bundles.
See \cite{LP} or \cite{DL} for a more thorough introduction to exceptional bundles.

\medskip\noindent
A vector bundle $E$ is called \textit{exceptional} if $\mathrm{ext}^i(E,E) =0$ for $i>1$ and $E$ has only homotheties as automorphisms. 
In particular, this implies that $\chi(E,E)=1$ and consequently, by \eqref{CHI} above, its discriminant is \[\Delta(E) = \frac{1}{2}\left(1-\frac{1}{\mathrm{rk}(E)^2}\right).\]
It is clear from this equation that the discriminant of an exceptional bundle is less than $\frac{1}{2}$.
In fact, exceptional bundles are the only stable sheaves with discriminant less than $\tfrac{1}{2}$.
In addition, an exceptional bundle with slope $\alpha$ is unique up to isomorphism, therefore we can refer to \textit{the} exceptional bundle $E_{\alpha}$ with slope $\alpha$, which also has discriminant $\Delta_\alpha$.
The line bundles $\mathcal{O}_{\PP^2}(d)$, as well as the tangent bundle $\mathcal{T}_{\PP^2}$, are examples of exceptional bundles.

\medskip\noindent
There is a bijection from the dyadic integers to the set of slopes of exceptional bundles, $\epsilon:\mathbb{Z}\left[\frac{1}{2}\right] \to \mathfrak{E}$, defined by the rules: 
\begin{equation*}
\begin{aligned}
1. &\quad  \epsilon(n) = n\text{ for }n\in\mathbb{Z},\text{ and}\\[2mm]
2. & \quad \epsilon\left(\frac{2p+1}{2^q}\right) = \epsilon\left(\frac{p}{2^{q-1}}\right)\cdot\epsilon\left(\frac{p+1}{2^{q-1}}\right), 
&\text{ where by definition } \alpha\cdot\beta := \frac{\alpha+\beta}{2}+\frac{\Delta_\beta-\Delta_\alpha}{3+\alpha-\beta}.
\end{aligned} 
\end{equation*}

\medskip\noindent
Exceptional bundles on the plane sit in \textit{(complete strong) exceptional collections} which are triples of the form $\{E_\alpha,E_{\gamma},E_\beta\}$ where the only nonzero $\mathrm{Ext}$ groups between any pair of them are $\Hom(E_\alpha,E_{\gamma})$, $\Hom(E_\alpha,E_\beta)$, and $\Hom(E_{\gamma},E_\beta)$.
We will focus on the exceptional collections where $\alpha = \epsilon\left(\frac{p}{2^q}\right)$, $\beta = \epsilon\left(\frac{p+1}{2^q}\right)$, and $\gamma=\alpha\cdot\beta$. 
Any triple of slopes of that form is an exceptional collection.

\medskip\noindent
Each exceptional bundle $E$ determines two parabolas in the $(\mu,\Delta)$-plane defined by $\chi(*,E) =0$ and $\chi(E,*)=0$. 
The equation of each of these parabolas is determined by \eqref{CHI} above. We are interested in the intersection points of these curves with the line $\Delta = \frac{1}{2}$ in a neighbourhood of $E$; we denote such points $G$ and $F$.

\begin{definition}\label{Cherncharacter}
For any log Chern character $\xi$, the $(\mu,\Delta)$-coordinates of $F$ and $G$ in the previous paragraph, together with any sufficiently divisible rank, are called the \textit{right endpoint log Chern character} and \textit{left endpoint log Chern character}, respectively. \end{definition}

\noindent
By forgetting the rank, we abuse notation by referring to \textit{the} right (respectively, left) endpoint Chern character.
In Figure 1, these have been marked as $F$ and $G$, respectively. For a fixed Chern character $\xi$, following \cite{CHW}, we will call an exceptional bundle $E$ the \textit{(primary) controlling exceptional} bundle of $\xi$, if the larger intersection point of $\xi^{\perp}$ and $\Delta = \frac{1}{2}$ lies between the left and right endpoint Chern characters of $E$. 
The secondary controlling exceptional bundle is defined analogously for the smaller intersection of $\xi^{\perp}\cap \{\Delta=\frac{1}{2}\}$, if the rank of $\xi$ is at least 3.

\begin{center}
\begin{figure}[htb]\label{FIG1}
\resizebox{.75\textwidth}{!}{\includegraphics{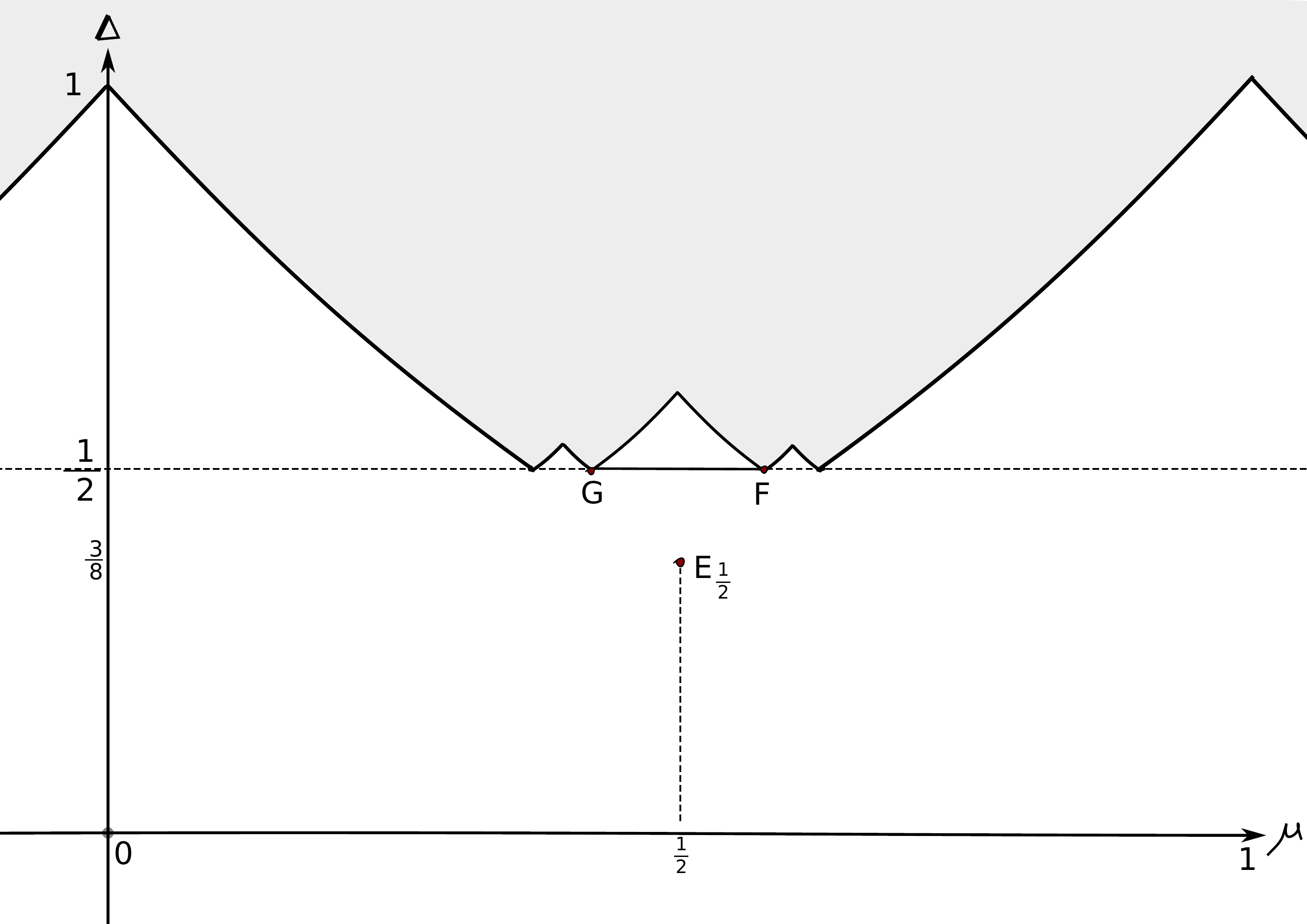}}
\caption{Endpoints $F,G$ prescribed by the Dr\'ezet-Le Potier curve over the exceptional bundle $E_{\frac{1}{2}}$.}
\end{figure}
\end{center}

\subsection{Free resolution of a coherent sheaf}\label{section2.4}

\medskip\noindent
The minimal free resolution (MFR) of a coherent sheaf on $\PP^2$ is determined by a matrix with homogeneous-polynomial entries. Indeed, any coherent sheaf $U$ on $\mathbb{P}^2$ has a minimal free resolution of the form \[0 \to \bigoplus_{i=1}^m \mathcal{O}_{\PP^2}(-d_{2,i})\overset{M}{\longrightarrow} \bigoplus_{i=1}^{m+k} \mathcal{O}_{\PP^2}(-d_{1,i}) \to U \to 0,\]
where $M$ is a matrix with homogeneous-polynomial entries. Recall that the \textit{Betti number} $\beta_{1,j}$ (respectively, $\beta_{2,j}$) is the number of summands in the middle term (respectively, left term) of the short exact sequence above such that $d_{1,i} = j$ (respectively, $d_{2,i} = j+1$). 
We will sometimes refer to these numbers as the \textit{left-hand side Betti numbers} making reference to the direction of the resolution.
For vector bundles, one can similarly take a free resolution of the bundle to the right by dualizing the minimal free resolution of the dual bundle. 
More precisely, let $U$ be a vector bundle of rank $\geq 2$. Its dual $U^*$ has a minimal free resolution
\[0 \to \bigoplus_{i=1}^m \mathcal{O}_{\PP^2}(d_{1,i})\to \bigoplus_{i=1}^{m+k} \mathcal{O}_{\PP^2}(d_{2,i}) \to U^* \to 0.\]
Note the difference in indexing and sign between this resolution and the resolution above of $U$. We make this convention aiming at simplifying subsequent definitions.
Since all of the factors are locally free, dualizing is exact and we have short exact sequence \[0 \to U \to \bigoplus_{i=1}^{m+k} \mathcal{O}_{\PP^2}(-d_{2,i})\to \bigoplus_{i=1}^{m} \mathcal{O}_{\PP^2}(-d_{1,i}) \to 0.\]
We define the \textit{right-hand side Betti numbers} $\gamma_{2,j}$ (respectively, $\gamma_{1,j}$) of $U$ to be  the number of summands in the middle (respectively, right) term of the short exact sequence above such that $d_{2,i} = j+1$ (respectively, $d_{1,i} = j$). Note also that the left-hand side Betti numbers of $U^*$, $\beta_{i,j}$, and the right-hand side Betti numbers of $U$, $\gamma_{i,j}$, are related by the equation $\beta_{i,j} = \gamma_{i,-j}$.  In the case that $U^*$ is not locally free, we define these similarly, ignoring the torsion which appears in the dualized sequence. We will say that a resolution $\mathcal{R}^\bullet$ is \textit{contained} in a resolution $\mathcal{S}^\bullet$ if they both have the same (co)domain and, up to row and column reduction, the matrix of the map in $\mathcal{R}^\bullet$ is contained in the matrix of the map in $\mathcal{S}^\bullet$.

\medskip\noindent
We recall that one may encode the Betti numbers of the minimal free resolution in a diagram; namely the Betti diagram. We will follow the notation in \cite{E}.

\begin{center}
\begin{tabular}{ c } 
 Betti diagram of a sheaf $U$\\
\hline
\begin{tabular}{ c| c c   }
0 &   1 & 2     \\ \hline 
1 & $\beta_{1,1}$ & $\beta_{2,1}$     \\
$\vdots$ &  &   \\
$d-1$ & $\beta_{1,d}$ & $\beta_{2,d}$  \\
\end{tabular}\\ 
\end{tabular}
\end{center}

\medskip\noindent
If a sheaf $U$ is general in its moduli space $M(\xi)$, then its minimal free resolution is the well-known \textit{Gaeta resolution}.
If $\chi\left(\mathcal{T}_{\mathbb{P}^2}(-d-1), U\right) \geq 0$, where $d$ is the minimal integer such that $h^0(\mathbb{P}^2,U(d))>0$ and $\mathcal{T}_{\mathbb{P}^2}$ denotes the tangent sheaf of $\PP^2$, then the Gaeta resolution of $U$ is 
\[0 \to \mathcal{O}_{\PP^2}(-d-2)^{-\chi\left(\mathcal{O}_{\PP^2}(-d+1),U\right)}  \oplus \mathcal{O}_{\PP^2}(-d-1)^{\chi\left(\mathcal{T}_{\mathbb{P}^2}(-d-1), U\right)} \to   \mathcal{O}_{\PP^2}(-d)^{\chi\left(\mathcal{O}_{\PP^2}(-d),U\right)} \to U \to 0.\]
Similarly, if $\chi\left(\mathcal{T}_{\mathbb{P}^2}(-d-1), U\right) \leq 0$, then the Gaeta resolution of $U$ is \[0 \to \mathcal{O}_{\PP^2}(-d-2)^{-\chi\left(\mathcal{O}_{\PP^2}(-d+1),U\right)}  \to \mathcal{O}_{\PP^2}(-d-1)^{-\chi\left(\mathcal{T}_{\mathbb{P}^2}(-d-1), U\right)} \oplus   \mathcal{O}_{\PP^2}(-d)^{\chi\left(\mathcal{O}_{\PP^2}(-d),U\right)} \to U \to 0.\]
A Gaeta minimal free resolution is said to be \textit{pure} if only two line bundles appear in it.

\medskip\noindent
The general object of the moduli space $M(\xi)$ admits another resolution in terms of exceptional bundles, which are usually of rank higher than 1 \cite[Proposition 5.3]{CHW}. 
Indeed, if $E_{\alpha\cdot\beta}$ is the controlling exceptional bundle of $\xi$ and $\chi(E_{-(\alpha\cdot\beta)}, U) > 0$ for a general sheaf $U \in M(\xi)$, then $U$ has resolution 
\begin{equation}\label{res1}
0 \to \left(E_{-\alpha-3}\right)^{-\chi\left(E_{-\alpha},U\right)}   \to\left(E_{-\beta}\right)^{-\chi\left(E_{-(\alpha\cdot(\alpha\cdot\beta))},U\right)}  \oplus \left(E_{-(\alpha\cdot\beta)}\right)^{\chi\left(E_{-(\alpha\cdot\beta)},U\right)} \to U \to 0.
\end{equation}

\medskip\noindent
Similarly,  if $E_{\alpha\cdot\beta}$ is the controlling exceptional bundle of $\xi$ and $\chi\left(E_{-(\alpha\cdot\beta)}, U\right) \leq 0$ for a general sheaf $U \in M(\xi)$, then $U$ has resolution
\begin{equation}\label{res2}
0 \to \left(E_{-3-(\alpha\cdot\beta)}\right)^{-\chi\left(E_{-(\alpha\cdot\beta)},U\right)} \oplus \left(E_{-\alpha-3}\right)^{\chi\left(E_{-((\alpha\cdot\beta)\cdot \beta)},U\right)}   \to\left(E_{-\beta}\right)^{-\chi\left(E_{-\beta},U\right)}  \to U \to 0.
\end{equation}
We have used relative Euler characteristics to express the exponents above as they come from the Beilinson spectral sequence. This notation differs from that of \cite{CHW} but they both denote the same quantities. Also notice that, in case $E_{\alpha\cdot \beta}$ is a line bundle, then the resolutions above coincide with the classical Gaeta minimal free resolution. Our main theorem will also incorporate this case.

\medskip\noindent
Each of the vector bundle resolutions (\ref{res1}) and (\ref{res2}) can be written in the derived category $D^b(\PP^2)$ as an exact triangle, called \textit{Gaeta triangle}:
\begin{equation}\label{kfu}
E_{-(\alpha\cdot\beta)}^{m}\to U \to W[1] \to \cdot,
\qquad
W \to U \to E_{-(\alpha\cdot\beta)-3}^{-m}[1]\to \cdot, \qquad\text{ or } \qquad E^n_{-\beta} \rightarrow U\rightarrow E^\ell_{-\alpha-3}[1] \rightarrow \cdot\end{equation}
where $m$, $n$, and $\ell$ are the appropriate powers and $W$ denotes a remainder complex. The triangle on the right-hand side, whose terms are $E_{-\beta}$ and $E_{-\alpha -3}[1]$, corresponds to the special case when $\chi(E_{-(\alpha\cdot \beta)},U)=0$. We will use these triangles in Section \ref{sec: eff} in order to compute the effective cone of divisors of the moduli space $M(\xi)$, and in Section \ref{sec: mov} in computing the cone of movable divisors of the Hilbert scheme of $n$ point on $\PP^2$, for two infinite families of values of $n$.

\subsection{Birational geometry}
We want to use the minimal free resolutions of sheaves to study the birational geometry of their moduli spaces. Let us briefly recall the notions at stake in birational geometry and refer the reader to \cite{Lazarsfeld} for a more thorough treatment of those.

\medskip\noindent
Recall that the N\'eron-Severi space $\mathrm{NS}(X)_\mathbb{Q}$ of a smooth algebraic variety $X$ is the vector space over $\mathbb{Q}$ of divisors modulo numeric equivalence. Inside the N\'eron-Severi space, the closure of the convex cone spanned by the classes of subvarieties is the \textit{effective cone}, $\mathrm{Eff}(X)$.
The effective cone decomposes according to the positivity of the divisors; specifically, according to their stable base locus.
The \textit{stable base locus} of a divisor $D$ on $X$, denoted $\mathbf{B}(D)$, is the intersection of all divisors linearly equivalent to any positive integer multiple of $D$, \[\mathbf{B}(D):= \bigcap_{m>0}\bigcap_{D' \in \vert mD \vert}D'.\]
Moduli spaces of sheaves on $\mathbb{P}^2$ are Mori dream spaces \cite{CHW} which implies that there is an open subset of the N\'eron-Severi space where the stable base locus is locally constant \cite{ELMNP}. 
The convex cones where it is locally constant are the \textit{chambers} of the stable base locus decomposition. We refer to the union of such chambers of $\EFF(X)$, which we abbreviate as SBLD, as the stable base locus decomposition.


\section{Base locus of the edge of $\EFF(M(\xi))$ via free resolutions}
\label{sec: eff}

\medskip\noindent
In this section we prove Theorem A, which asserts that the Gaeta minimal free resolution determines the effective cone $\EFF(M(\xi))$.  
This will be the first step of a program, which uses minimal free resolutions of sheaves on $\mathbb{P}^2$, to recover the Bridgeland destabilizing objects of those sheaves and hence the stable base locus decomposition of their moduli space.
In order to do this first step, we have to examine the properties of the maps that occur in the minimal free resolutions.

\medskip\noindent
We call a matrix with homogeneous-polynomial entries a  $\zeta$-\textit{map} if it can be row-and-column reduced to the  following block-form
\[M=\left(\begin{array}{c|c}
0  & A(M) \\ \hline
 B(M) & C(M)
\end{array}\right),\]
where $B(M)$ can be realized as the map of the minimal free resolution of a sheaf with Chern character $\zeta$. Note, the matrix $B(M)$ might be empty when $\zeta$ is the character of a line bundle. Notice that if the matrices $M$ and $B$ come from minimal free resolutions, then they are full-rank matrices and it follows that $A$ is a full rank matrix as well.

\medskip\noindent
Fix a Betti diagram, $G$. Let $S$ be the set of $\zeta$'s such that the general map in a resolution with Betti diagram $G$ is a $\zeta$-map. 
Then we say a map $M$ is a \textit{general map with Betti diagram $G$} if the set of $\zeta$'s for which it is a $\zeta$-map is contained in $S$.

\medskip\noindent
We have defined $\zeta$-maps in order to differentiate between two sheaves whose minimal free resolutions both have the same Betti diagram but whose maps have different properties. This definition emphasizes that a minimal free resolution of a sheaf consists of a Betti diagram and a map.

\begin{definition}\label{generalgaeta}
Let $U$ be a coherent sheaf in $M(\xi)$ with minimal free resolution
\[0\rightarrow \bigoplus^r_{i=1}  \mathcal{O}_{\PP^2}(a_i) \xrightarrow{M} \bigoplus^s_{i=1}  \mathcal{O}_{\PP^2}(b_i)\rightarrow U\rightarrow 0\] such that $a_i\le a_{i+1}$ and $b_i\le b_{i+1}$. Then $U$ is said to be  $\zeta$-\textit{admissible} (respectively, \textit{general admissible}) if the matrix $M$ is a $\zeta$-map (respectively, general map with that Betti diagram). \end{definition}

\medskip\noindent
If the coherent sheaf $U\in M(\xi)$ has a Gaeta minimal free resolution and is $\zeta$-admissible (respectively, general admissible), 
then $U$ will be called \textit{Gaeta} $\zeta$-\textit{admissible} (respectively, \textit{Gaeta admissible}). 
The former label (Gaeta) tells us the Betti diagram of the general sheaf with this property while the latter label (($\zeta$-)admissible) tells us a property of the map.




\medskip\noindent
The Gaeta triangle in which a sheaf $U$ fits depends on the sign of the Euler characteristic $\chi(E_{-(\alpha\cdot\beta)}, U)$, which means that our statement and proofs must depend on it as well. Let us restate Theorem A using the definitions above. 
It is worth mentioning that the matrices $A$ or $B$ in the following Theorem can have zero size. This is the case  when the Gaeta triangle involves line bundles in its factors.

\begin{theorem}\label{MAINdetailed} 
Let $U\in M(\xi)$ be a sheaf and let $\alpha\cdot \beta$ be the controlling exceptional slope of $\xi$.  Then there are three options:
\begin{enumerate}
\item[(a)] Assume $\chi(E_{-(\alpha\cdot\beta)}, U) > 0$. 
Then, the sheaf $U$ admits a resolution as in \eqref{res1} if and only if $U$ is Gaeta $\zeta$-admissible, where $\zeta$ is the character of $E_{-(\alpha \cdot \beta)}^{\chi(E_{-(\alpha \cdot \beta)},U)}$.

\medskip
\item[(b)] Assume $\chi (E_{-(\alpha\cdot\beta)}, U)< 0$. Then, the sheaf $U$ admits a resolution as in \eqref{res2} if and only if  $U$ is Gaeta $\zeta$-admissible where $\zeta$ is the character of a sheaf $W$ defined by the short exact sequence \[0 \to E_{-\alpha-3}^{-\chi\left(E_{-((\alpha \cdot \beta)\cdot \beta)},U\right)}\to E_{-\beta}^{\chi(E_{-\beta},U)}\to  W \to 0.\]

\medskip
\item[(c)] Assume $\chi (E_{-(\alpha\cdot \beta)}, U)= 0$. Then, the sheaf $U$ admits the following resolution \begin{equation}\label{res3}
0 \to (E_{-3-\alpha})^{-\chi\left(U\otimes E_{\alpha}\right)}   \to \left(E_{-\beta}\right)^{-\chi(U\otimes E_{\beta})}  \to U \to 0.
\end{equation}
if and only if $U$ is Gaeta $\zeta$-admissible where $\zeta$ is the character of $E_{-\beta}^{-\chi(U\otimes E_{\beta})}$.
 \end{enumerate}

\end{theorem}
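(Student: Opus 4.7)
The overarching strategy is to recast each Gaeta triangle of $U$ as arising from a mapping cone of line-bundle complexes in $D^b(\PP^2)$. The three exceptional bundles $E_{-(\alpha\cdot\beta)}$, $E_{-\beta}$, $E_{-\alpha-3}$ each admit an explicit two-term minimal free resolution by line bundles, and the key observation is that any morphism between such bundles (or their direct sums) lifts to a morphism between these resolutions. Consequently the mapping cone of the lifted chain map is a complex of line bundles quasi-isomorphic to $U$, and the proof reduces to showing that it is the \emph{minimal} free resolution of $U$ and that its differential inherits the block form $\begin{pmatrix} 0 & A \\ B & C \end{pmatrix}$ prescribed by $\zeta$-admissibility.

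For the forward direction in case (a), assume $U$ fits into $E_{-(\alpha\cdot\beta)}^m \to U \to W[1] \to \cdot$, so $U \simeq \mathrm{Cone}(W \to E_{-(\alpha\cdot\beta)}^m)$. I would replace $E_{-(\alpha\cdot\beta)}^m$ with its minimal free resolution $K^\bullet$ and resolve $W$ by line bundles via the horseshoe lemma applied to the two exceptional bundles making up $W$. Lifting the morphism $W \to E_{-(\alpha\cdot\beta)}^m$ to a chain map $L^\bullet \to K^\bullet$ between these resolutions, the mapping cone is a complex of line bundles whose term-by-term Betti numbers, computed via \eqref{CHI}, match the Gaeta Betti diagram. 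Minimality, that is the absence of cancelling $\cO(-k)$ summands between adjacent terms, holds because the twists contributed by $L^\bullet$ and $K^\bullet$ lie in $\{-d, -d-1, -d-2\}$ and are distributed across the three homological positions in a way dictated by the Beilinson spectral sequence of the slopes $\alpha$, $\beta$, $\alpha\cdot\beta$. The differential of the cone is then a block matrix with lower-left block equal to the MFR map of $E_{-(\alpha\cdot\beta)}^m$, which is precisely the $\zeta$-admissibility condition for $\zeta = [E_{-(\alpha\cdot\beta)}^{\chi(E_{-(\alpha\cdot\beta)},U)}]$. Case (c) reduces to the same computation with $W$ replaced by the single exceptional bundle $E_{-\alpha-3}^\ell$, while case (b) requires first producing an explicit MFR of the intermediate sheaf $W$ via the horseshoe lemma on $0 \to E_{-\alpha-3}^{-\chi\left(E_{-((\alpha\cdot\beta)\cdot\beta)},U\right)} \to E_{-\beta}^{\chi(E_{-\beta},U)} \to W \to 0$, which then supplies the lower-left block $B$.

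For the converse, given a Gaeta $\zeta$-admissible sheaf $U$ with matrix $M = \begin{pmatrix} 0 & A \\ B & C \end{pmatrix}$, the columns on which $B$ acts define a subcomplex whose cokernel is a sheaf $V$ of character $\zeta$. The composite sending the codomain of $B$ into the codomain of $M$ and then projecting to $U$ descends to an injection $V \hookrightarrow U$ (injectivity uses that $A$ is itself a minimal resolution map and hence injective), with cokernel $\mathrm{coker}(A)$. The resulting short exact sequence $0 \to V \to U \to \mathrm{coker}(A) \to 0$ yields the desired Gaeta triangle after identifying $V$ and $\mathrm{coker}(A)$ with the prescribed exceptional bundles in cases (a) and (c), or with the sheaf $W$ in case (b); this identification is forced by the rank, slope, and discriminant of these sheaves being determined by the Betti numbers together with the constraint of coming from an exceptional triple.

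The principal obstacle is verifying minimality of the mapping cone in each of the three cases, which is why the proof is distributed across \S\ref{Positivecase}, \S\ref{NegativeCase}, and \S\ref{sec: exceptional case}. Minimality boils down to numerics: the line-bundle twists appearing in the resolutions of $E_{-\alpha-3}$, $E_{-\beta}$, and $E_{-(\alpha\cdot\beta)}$ must separate into the Gaeta twists $-d, -d-1, -d-2$ without unintended overlap, a property controlled by the position of $\xi$ relative to the Dr\'ezet--Le Potier curve and the dyadic combinatorics of the controlling exceptional slopes.
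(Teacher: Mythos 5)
Your overarching strategy --- realizing $U$ as the mapping cone of a map between two-term line-bundle complexes built from the factors of the Gaeta triangle, and arguing that the cone is the minimal free resolution with the block form $\left(\begin{smallmatrix} 0 & A \\ B & C \end{smallmatrix}\right)$ --- is the same as the paper's (Propositions \ref{lem: pos matrix}, \ref{lem: neg matrix}, \ref{lem: exc matrix}). But there are two genuine gaps in the forward direction. First, you resolve the remainder object $W$ on the wrong side: a horseshoe resolution built from the minimal free resolutions of the two exceptional bundles is a \emph{left} resolution of length two, so its cone with the two-term resolution of $F$ is a three-term complex of line bundles, and you would still have to prune it down to a length-one resolution of $U$ --- a cancellation step you never address, and which need not produce a minimal resolution. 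The paper avoids this entirely by resolving $W$ to the \emph{right}, i.e.\ by dualizing the minimal free resolution of $W^*$ to get $0\to W\to L_{-1}\to L_0\to 0$ (the right-hand side Betti numbers of \S\ref{section2.4}), so the cone is automatically a two-term complex. Second, and more seriously, the technical heart of the paper is Proposition \ref{lem: pos Betti} and its analogues: one must verify that the Euler-characteristic expressions defining the exponents of the two resolutions have the correct \emph{signs} and sum to the Gaeta Betti numbers, and this is exactly where Lemma \ref{INEQUALITY1}, the left/right endpoint characters on $\Delta=\tfrac{1}{2}$, and the reduction to a linear inequality in $d$ (with the bounds $\tfrac{3}{8}\le\Delta_E<\tfrac{1}{2}$, etc.) are needed. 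Your sketch simply asserts that the Betti numbers ``match'' and that minimality follows because the twists of the two resolutions separate across homological positions; in fact the twists do overlap (both resolutions contribute $\OOT(-d-1)$ summands in adjacent slots in the explicit formulas of Proposition \ref{lem: pos matrix}), and in the paper minimality is deduced from the Betti-number identity together with the full-rank and zero-block structure of $M$, not from disjointness of twists. Without the sign analysis, the construction could a priori call for negative ``exponents,'' and the identification with the Gaeta diagram collapses.

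For the converse, you posit a sheaf-level short exact sequence $0\to V\to U\to \mathrm{coker}(A)\to 0$. In general the third term of a Gaeta triangle is a two-term complex (the object $W[1]$, which is a sheaf only when its rank is at least $2$), so the reconstruction has to be performed in $D^b(\PP^2)$ from the short exact sequence of two-term complexes determined by the block decomposition of $M$, as the paper does; moreover, identifying the quotient object with the prescribed exceptional factors is not ``forced by rank, slope, and discriminant'' alone, since a character does not determine a sheaf --- what is used is that the quotient is a general object with that presentation. These points are fixable (and the paper's own converse is terse), but as written your converse does not cover all three cases.
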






\medskip\noindent
We prove this by showing that any general Gaeta resolution is the ``sum'' of free resolutions of the two objects which form a Gaeta triangle in ($\ref{kfu}$). In order to do that, first we will show that the Gaeta resolution has the same Betti numbers as the direct sum of the (general) Betti numbers of each of the terms in the Gaeta triangle in which it fits. Secondly, we show that the resulting map in the free resolution is a $\zeta$-map and that this free resolution is minimal.  Subsections \ref{Positivecase}, \ref{NegativeCase}, and \ref{sec: exceptional case} prove parts (a), (b), and (c) of the theorem, respectively.

\begin{definition}\label{DDGaeta}
The complement of the open set in $M(\xi)$ which parametrizes Gaeta admissible sheaves has codimension bigger or equal than 1, and we denote it by \[D_{\tiny{Gaeta}}=\{F\in M(\xi)\ | \ F \text{ is not a Gaeta admissible sheaf} \}.\]
\end{definition}

\medskip\noindent
The following three corollaries study the relationship between the stable base locus of $D_V$, the edge of $\EFF(M(\xi))$, and $D_{\tiny{Gaeta}}$.

\begin{cor}\label{cor: EffRigid}
Let $U\in M(\xi)$ be a sheaf and let $\alpha\cdot \beta$ be the controlling exceptional slope of $\xi$.

\begin{itemize}
    \item[(a)]  If $D_V$ spans the primary extremal ray of $\EFF(M(\xi))$, then we have that the stable base locus of $D_V$ satisfies $\textbf{B}({D_V})\subset D_{\tiny{Gaeta}}$. 
    
    \item[(b)] Assume that $\chi(E_{-(\alpha\cdot \beta)},U)=0$, then $D_{\tiny{Gaeta}}\subset M(\xi)$ is an irreducible, reduced divisor and it spans the primary extremal ray of $\mathrm{Eff}(M(\xi))$.  
Moreover, this divisor is the base locus of the primary extremal chamber of $\mathrm{Eff}(M(\xi))$.
\end{itemize}

\end{cor}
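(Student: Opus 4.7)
For part (a), Theorem \ref{MAINdetailed} identifies $M(\xi) \setminus D_{\tiny{Gaeta}}$ with the locus of sheaves that fit into one of the Gaeta triangles of \eqref{gaetatriangle}. For any such $F$, the CHW argument recalled in the introduction shows that each outer term of the Gaeta triangle, tensored with the primary interpolating bundle $V$, has no cohomology; the long exact sequence in cohomology of the tensored triangle then forces $h^i(F \otimes V) = 0$ for all $i$, so $F \notin D_V$. Since $\mathbf{B}(D_V) \subset D_V$ by definition, we conclude $\mathbf{B}(D_V) \subset D_{\tiny{Gaeta}}$, as claimed.

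For part (b), the hypothesis $\chi(E_{-(\alpha \cdot \beta)}, U) = 0$ places us in case (c) of Theorem \ref{MAINdetailed}, with Gaeta triangle $E^n_{-\beta} \to U \to E^\ell_{-\alpha-3}[1]$. Splicing the minimal free resolutions of the two exceptional outer terms yields a Gaeta MFR for a general $U$ which is pure, i.e., involves only two distinct line bundles of consecutive degrees. My plan then splits into two steps. First, I show that $D_{\tiny{Gaeta}}$ is an irreducible, reduced divisor: with the pure Betti diagram in hand, failure of Gaeta $\zeta$-admissibility is governed by the rank-drop of a single submatrix of the universal presentation, and parametrizing MFRs in a versal family over $M(\xi)$ realizes the failure locus as a classical determinantal divisor, irreducible of codimension exactly one and generically reduced. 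Second, I identify $D_{\tiny{Gaeta}}$ with the primary extremal divisor: by CHW, the Brill--Noether divisor $D_V$ spans the primary extremal ray and is rigid, so $\mathbf{B}(D_V) = D_V$ as sets and $D_V$ is itself an irreducible codimension-one subscheme. Part (a) then gives $D_V \subset D_{\tiny{Gaeta}}$, and an inclusion of irreducible divisors forces set-theoretic equality; reducedness upgrades this to equality as divisors. Consequently the class of $D_{\tiny{Gaeta}}$ spans the primary extremal ray of $\EFF(M(\xi))$, and $D_{\tiny{Gaeta}}$ coincides with the stable base locus of the primary extremal chamber.

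The main obstacle is the determinantal analysis at the heart of the first step of part (b): verifying that the failure locus has codimension exactly one and is irreducible, as opposed to a union of higher-codimension strata or reducible determinantal pieces. The pure form of the Gaeta diagram supplied by the hypothesis $\chi = 0$ is essential — without purity, distinct Betti-diagram degenerations could contribute distinct components of differing codimensions. The technical crux is thus constructing a versal family of presentation matrices near a general Gaeta admissible sheaf and verifying the transversality of the vanishing of the relevant minor along $D_{\tiny{Gaeta}}$.
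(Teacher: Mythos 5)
Your part (a) is essentially the paper's argument: Theorem \ref{MAINdetailed} plus the CHW orthogonality along the Gaeta triangle shows a Gaeta admissible sheaf lies outside some Brill--Noether divisor of class $D_V$, hence outside $\mathbf{B}(D_V)$. No issue there.

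Part (b), however, has genuine gaps. First, the step you yourself flag as the ``technical crux'' --- that the locus of non-Gaeta-admissible sheaves is a divisor, irreducible and generically reduced, via a versal family of presentation matrices and a classical determinantal argument --- is not carried out, and it is not routine: the failure locus is not literally a single minor's vanishing (the generic point of $D_{\tiny{Gaeta}}$ has a \emph{different} Betti diagram, i.e.\ it is a cohomology-jumping locus for $U\otimes E_{\alpha\cdot\beta}$, not a rank-drop inside a fixed Gaeta-shaped matrix), and one must also rule out that the failure locus has codimension $\geq 2$ or extra components. This is exactly the content the paper imports from \cite[Proposition 5.9, Theorem 7.3]{CHW} after using Theorem \ref{MAINdetailed}(c) to identify $D_{\tiny{Gaeta}}$ with the Brill--Noether divisor $D_{E_{\alpha\cdot\beta}}$ (the locus of sheaves with no Gaeta triangle); your proposal replaces that citation with a sketch that would need substantial work. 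Second, your identification step rests on the assertion ``by CHW, $D_V$ \ldots is rigid, so $\mathbf{B}(D_V)=D_V$.'' CHW does not state this; the statement you need (that the extremal divisor actually lies in the stable base locus, equivalently the converse of \cite[Corollary 7.2]{CHW}) is precisely what the paper takes care to prove, via the rational map $\pi: M(\xi)\dashrightarrow Kr_N(m_1,m_2)$ to the Kronecker moduli space and the observation that $\pi$ has positive-dimensional fibers (or is undefined) over $D_{E_{\alpha\cdot\beta}}$. Without that fibration/contracted-curve input, rigidity of $D_V$ is unjustified, and even granting it, your conclusion only addresses the boundary ray: to get that $D_{\tiny{Gaeta}}$ is the base locus of the whole primary extremal \emph{chamber} you still need curves sweeping out $D_{\tiny{Gaeta}}$ that are negative on classes in the interior of the chamber (again the Kronecker-fiber curves), plus the reverse inclusion from CHW's Corollary 7.2. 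So the skeleton of your part (b) could be made to work, but the two pillars it stands on --- the determinantal divisoriality of $D_{\tiny{Gaeta}}$ and the rigidity/contractedness of $D_V$ --- are exactly the nontrivial points, and they are asserted rather than proved.
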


\begin{proof}
(a) In order to show this, it suffices to show that if a sheaf $U$ is Gaeta admissible, it is not in the base locus of $D_V$, where $D_V$ spans the primary extremal ray of the effective cone.
If $U$ is Gaeta admissible, then admits a resolution of type \eqref{res1} or \eqref{res2} so it not in some Brill-Noether divisor with class $D_V$. 
Therefore, it is not in the stable base locus of $D_V$.

\medskip\noindent
(b) By \cite[Proposition 5.9]{CHW}, the locus of sheaves which do not have a Gaeta triangle is a divisor, $D_{E_{\alpha.\beta}}$, which spans the primary edge of the effective cone $\EFF(M(\xi))$.
By the converse of \cite[Corollary 7.2]{CHW}, this divisor is the base locus of the primary extremal chamber.
Note, \cite{CHW} does not state the converse of their Corollary 7.2, however it does hold as they indirectly note in the discussion after Theorem 1.3.
We can argue such a converse as follows. Let $\pi:M(\xi) \dashrightarrow Kr_N(m_1,m_2)$ be the rational map to the moduli space of Kronecker modules induced by the association $U\mapsto W$, where $W$ is a complex of the form \cite[Prop. 6.3]{CHW} $$E_{-\alpha-3}^{m_1}\rightarrow E_{-\beta}^{m_2}.$$
If a sheaf $U$ is in the divisor $D_{E_{\alpha.\beta}}$, then it either does not have a corresponding Kronecker module (it is in the indeterminacy locus of $\pi$), or it is in one of the positive dimensional fibers of $\pi$. Indeed,
if $U\in D_{E_{\alpha \cdot\beta}}$ and has a corresponding Kronecker module, then the Beilinson spectral sequence degenerates to
\[0 \to E_{-\alpha-3}^a \oplus E_{-(\alpha.\beta)}^b \to E_{-(\alpha.\beta)}^b \oplus E_{-\beta}^c \to U \to 0\]
with $b>0$.
The fibers of $\pi$ correspond to fixing the Kronecker module $i.e.$, the map $E_{-\alpha-3}^a  \to E_{-\beta}^c$, and  varying the maps to and from $E_{-(\alpha.\beta)}^b$. 
When $b>0$, this means $\pi$ has positive dimensional fibers over the Brill-Noether divisor $D_{E_{\alpha \cdot \beta}}$; hence it gets contracted and constitutes the base locus.
By Theorem \ref{MAINdetailed}, part (c), a sheaf $U\in M(\xi)$ has a Gaeta triangle if and only if it is $\zeta$-admissible, with $\zeta$ the character of $E_{-\beta}^{-\chi(U\otimes E_{\beta})}$, therefore $D_{E_{\alpha \cdot \beta}}=D_{\tiny{Gaeta}}$. 
Moreover, \cite[Theorem 7.3]{CHW} yields $D_{\tiny{Gaeta}}$ irreducible and reduced. 
\end{proof}

\medskip\noindent
There are cases for which the inclusion in Part (a) is strict. For example, consider the Hilbert scheme of 9 points on the plane, and observe that the locus of 9 points which are a complete intersection of two cubics is contained in $D_{\tiny{Gaeta}}$ but fails to be in $\textbf{B}({D_V})$.

\begin{definition}
We call the Chern charter $\xi$ \textit{rigid} if $D_{\tiny{Gaeta}}$ has codimension $1$.
\end{definition}

\medskip\noindent
There are two reasons for a sheaf to be contained in $D_{\tiny{Gaeta}}$. One corresponds to failing to be $\zeta$-admissible, hence having a special map. 
The other corresponds to having a Betti diagram which is not that of Gaeta. 
Among those, a \textit{divisorial Gaeta resolution} will be a minimal free resolution whose Betti diagram is not that of Gaeta but where the locus of sheaves with that resolution form a divisor in $M(\xi)$. 
The following two cases exhibit divisorial Gaeta resolutions:

\begin{enumerate}[1.]
    \item If the moduli space $M(\xi)$ is non-empty and the general $U\in M(\xi)$ has Gaeta resolution
    \begin{equation}\label{pure1}
    0\to\OO_{\PP^2}(-1)^s\to\OO_{\PP^2}^t\rightarrow U\rightarrow 0,
    \end{equation}
    then a divisorial Gaeta resolution is 
    $$0\to\OO_{\PP^2}(-2)\oplus\OO_{\PP^2}(-1)^{s-3}\to\OO_{\PP^2}^{t-3}\oplus\OO_{\PP^2}(1)\rightarrow U'\rightarrow 0$$
    as long as $t>s\geq3$. 
    Note that for $t\geq s+2$, the general sheaf in this moduli space is a Steiner vector bundle; whereas for $t=s+1$, it is a (shifted) ideal of $n$ points, with $n$ a triangular number.
    
    \item If the moduli space $M(\xi)$ is non-empty and the general $U\in M(\xi)$ has Gaeta resolution
    \begin{equation}\label{pure2}
    0\to\OO_{\PP^2}(-2)^s\to\OO_{\PP^2}^t\rightarrow U\rightarrow 0    
    \end{equation}
    then a divisorial Gaeta resolution is
    $$0\to\OO_{\PP^2}(-2)^s\oplus\OO_{\PP^2}(-1)\to\OO_{\PP^2}(-1)\oplus\OO_{\PP^2}^t\rightarrow U'\rightarrow 0$$
    as long as $t> s\geq1$.
    \end{enumerate}

\medskip\noindent
\begin{rmk}\label{DivGaeta}
In the case of the Hilbert scheme of points on the plane, $M(1,0,-n)=\PP^{2[n]}$, the cases above account for all the divisorial Gaeta resolutions. We will analyze these two cases in Section \ref{sec: mov}.
\end{rmk}

\medskip\noindent
\begin{cor}\label{cor: DGaeta}
If the Gaeta resolution of $U\in M(\xi)$ is pure then the Chern character $\xi$ is rigid. Moreover, when the Gaeta resolution is pure as above in (\ref{pure1}), (\ref{pure2}), then a generic point in $D_{\tiny{Gaeta}}$ has the divisorial Gaeta minimal free resolution. 
\end{cor}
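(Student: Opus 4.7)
The plan is to combine a dimension count for the family of sheaves with divisorial Gaeta minimal free resolution with the irreducibility of $D_{\tiny{Gaeta}}$ provided by Corollary \ref{cor: EffRigid}(b).

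First, I argue that a pure Gaeta resolution places us in case (c) of Theorem \ref{MAINdetailed}. The vanishing of one Betti number in the Gaeta resolution is equivalent to the vanishing of an Euler characteristic $\chi(E,U)=0$ for an exceptional bundle $E$: for (\ref{pure1}), $E$ is the line bundle corresponding to the missing term, while for (\ref{pure2}), $E$ is an appropriate twist of the tangent bundle $\mathcal{T}_{\mathbb{P}^2}$. Analysing the controlling exceptional $E_{\alpha\cdot\beta}$ attached to $\xi$ shows that we may take $E=E_{-(\alpha\cdot\beta)}$, placing us in case (c) with $\chi(E_{-(\alpha\cdot\beta)},U)=0$. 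Corollary \ref{cor: EffRigid}(b) then yields that $D_{\tiny{Gaeta}}$ is an irreducible reduced divisor of codimension one, so $\xi$ is rigid.

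Second, for the ``moreover'' clause, I parametrise the family of sheaves with divisorial Gaeta resolution by the Hom-space between the two free modules appearing in the resolution, modulo the action of their parabolic automorphism groups, subject to the minimality constraint (in case (\ref{pure2})) that the $\mathcal{O}(-d-1)\to\mathcal{O}(-d-1)$ entry vanishes. In case (\ref{pure1}), the standard count with a one-dimensional diagonal-scalar stabiliser gives dimension $\dim M(\xi)-1$. In case (\ref{pure2}), the key subtlety is that the generic stabiliser of the $\mathrm{Aut}\times\mathrm{Aut}$ action on the constrained subspace is two-dimensional: besides the diagonal scalars, a one-parameter family of automorphisms compatible with the minimality constraint stabilises a generic map. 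Once this is accounted for, the divisorial Gaeta family has codimension one in $M(\xi)$.

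Finally, the divisorial Gaeta sheaves are not Gaeta admissible (their Betti diagrams strictly differ from the Gaeta one), so this family lies in $D_{\tiny{Gaeta}}$. Since $D_{\tiny{Gaeta}}$ is irreducible of codimension one and the divisorial Gaeta family is a codimension-one subvariety of it, the two must coincide on a dense open subset; hence the generic point of $D_{\tiny{Gaeta}}$ has the stated divisorial Gaeta resolution. The principal technical obstacle is the identification of the controlling exceptional $E_{\alpha\cdot\beta}$ from the shape of the pure Gaeta resolution, needed to align the purity vanishing with the hypothesis of Corollary \ref{cor: EffRigid}(b); a secondary subtlety is the correct stabiliser calculation in case (\ref{pure2}), without which the naive count would give the wrong codimension.
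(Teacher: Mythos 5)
Your proposal is correct and follows essentially the same route as the paper: purity places $U$ in case (c) of Theorem \ref{MAINdetailed} (the controlling exceptional bundle being a line bundle or a twist of $\mathcal{T}_{\PP^2}$), Corollary \ref{cor: EffRigid}(b) then makes $D_{\tiny{Gaeta}}$ an irreducible reduced divisor, and the irreducible codimension-one family of sheaves with divisorial Gaeta resolution, being contained in $D_{\tiny{Gaeta}}$, must coincide with it generically. The only difference is that you substantiate the codimension-one claim, which the paper merely asserts, by an explicit parameter count modulo automorphisms, correctly identifying the two-dimensional generic stabiliser in case (\ref{pure2}) coming from the scalars together with the one-dimensional space $\Hom\left(\mathcal{O}_{\PP^2}(-1)\oplus\mathcal{O}_{\PP^2}^{t},\,\mathcal{O}_{\PP^2}(-2)^{s}\oplus\mathcal{O}_{\PP^2}(-1)\right)$.
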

\begin{proof}
    If the Gaeta resolution of $U$ is pure, then we may apply Theorem \ref{MAINdetailed} part (c). 
    Indeed, in this case the controlling exceptional bundle $E_{(\alpha \cdot \beta)}$ is either a line bundle or the tangent bundle because the exponents in the Gaeta minimal free resolutions are defined using only those, and we have that $\chi(E_{-(\alpha \cdot \beta)}, U)=0$.

In both cases, the exceptional bundles $E_{-\alpha-3}$ and $E_{-\beta}$ are the line bundles that occur in the minimal free resolution of $U$. 
    By Corollary \ref{cor: EffRigid}, it follows that $D_{\tiny{Gaeta}}$ is an irreducible divisor (which spans an extremal ray of $\EFF(M(\xi))$). 

    Let $\mathcal{H}$ denote the family of sheaves above with \textit{divisorial Gaeta} resolution. This family is irreducible, is contained in $D_{\tiny{Gaeta}}$ and has codimension $1$ in $M(\xi)$, hence $\mathcal{H}=D_{\tiny{Gaeta}}$. 
    \end{proof}

\noindent
In the case of the Hilbert scheme of points $\PP^{2[n]}$, we have a converse of the previous Corollary.

\begin{cor}\label{cor: DGaeta2}
Let $n$ be such that $D_{\tiny{Gaeta}}$ has codimension 1 in $\PP^{2[n]}$. Then, the Gaeta minimal free resolution is pure or the generic point in $D_{\tiny{Gaeta}}$ has Gaeta Betti diagram. \end{cor}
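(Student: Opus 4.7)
The plan is to decompose $D_{\tiny{Gaeta}}$ according to the two distinct ways a sheaf can fail to be Gaeta admissible, as encoded in Definition \ref{generalgaeta}. Specifically, I would write $D_{\tiny{Gaeta}} = D_1 \cup D_2$, where $D_1$ is the locus of sheaves whose MFR has the Gaeta Betti diagram but whose map is not $\zeta$-admissible, and $D_2$ is the locus of sheaves whose Betti diagram is not that of Gaeta. The target is then a simple dichotomy: if the Gaeta MFR is pure, there is nothing to prove; otherwise I need to show that every component of $D_2$ has codimension at least two in $\PP^{2[n]}$, so that the codimension-one divisor $D_{\tiny{Gaeta}}$ is forced to have its generic point in $D_1$ and hence carry the Gaeta Betti diagram.

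The key input for bounding the codimension of $D_2$ is Remark \ref{DivGaeta}, which for $\PP^{2[n]}$ classifies all divisorial non-Gaeta Betti diagrams: they fall into exactly the two families (\ref{pure1}) and (\ref{pure2}), and in both of them the Gaeta Betti diagram of the generic sheaf is pure. Under the assumption that the Gaeta MFR is not pure, neither family applies, so no sublocus of $D_2$ is divisorial; every component of $D_2$ has codimension at least two. Combined with the hypothesis $\mathrm{codim}\, D_{\tiny{Gaeta}} = 1$, this forces the codimension-one part of $D_{\tiny{Gaeta}}$ to lie in $D_1$, which is exactly the statement that the generic point has the Gaeta Betti diagram.

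The principal obstacle in executing this plan is the full justification of Remark \ref{DivGaeta}, which is the nontrivial classification step that the rest of the argument rests on. The underlying heuristic combines Hilbert--Burch with upper-semicontinuity of Betti numbers: every jump in the Betti diagram of an ideal of $n$ points arises by adding pairs of redundant generator/syzygy line bundles, and in the non-pure Gaeta setting each such addition imposes more than one independent condition on the $n$-tuple, pushing the corresponding stratum into codimension at least two. Only in the pure Gaeta cases does a single extra Koszul-type pair cut out a genuine divisor, yielding precisely (\ref{pure1}) and (\ref{pure2}). Making this counting rigorous, or alternatively verifying Remark \ref{DivGaeta} directly against the known stratifications of $\PP^{2[n]}$ by Hilbert function and Betti table, is the essential ingredient beyond which the corollary follows immediately.
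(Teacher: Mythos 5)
Your proposal is correct and follows essentially the same route as the paper: the paper's proof is precisely the observation that when the Gaeta resolution is not pure there are no divisorial non-Gaeta Betti diagrams (the content of Remark \ref{DivGaeta}), so a codimension-one $D_{\tiny{Gaeta}}$ must have its generic point with the Gaeta Betti diagram. Your decomposition into $D_1\cup D_2$ just makes this dichotomy explicit, and your reliance on Remark \ref{DivGaeta} as the key (and unproved-in-detail) input matches the paper exactly.
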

\begin{proof}
If $D_{\tiny{Gaeta}}$ is divisorial, since there are no divisorial Betti diagrams when the Gaeta resolution is not pure, then the general element of $D_{\tiny{Gaeta}}$ must have the Gaeta Betti diagram.
\end{proof}

\medskip\noindent
The previous results show that minimal free resolutions contain the information of the first Bridgeland wall. In Section \ref{sec: mov}, we prove that the minimal free resolutions also contain the information of the movable cone of $\PP^{2[n]}$, the Hilbert scheme of $n$ points on $\PP^2$, when the Gaeta resolution of the generic point in $\PP^{2[n]}$ is pure. This will imply that the minimal free resolutions have the information of the second Bridgeland wall in these cases.


\medskip\noindent
We now proceed to prove Theorem \ref{MAINdetailed}. Let $\xi = (s, \mu_\xi, n)$ be a fixed stable log Chern character such that the moduli space $M(\xi)$ is positive dimensional with Picard rank two.
Recall that a general $U\in M(\xi)$  fits into one of the triangles in (\ref{kfu}) where $W$, $F$ are complexes of sheaves in the derived category $D^b(\PP^2)$. Note $F$ is always a sheaf and so is $W$ if its rank is at least $2$. 
We first aim to show that the Gaeta minimal free resolution can be recovered from the Gaeta triangle. 
Following the notation of \cite{CHW}, the proof of this depends on the sign of the $\chi (E_{-(\alpha\cdot\beta)}, U)$, where $\alpha\cdot\beta$ denotes the exceptional slope associated to $\xi$. We treat these cases separately. Let $E_{\alpha\cdot\beta} = E^*(d)$ where $\mu_E \in [0,1)$ and $E^*$ denotes the dual vector bundle of $E$.


\subsection{The positive case: $\chi(E_{-(\alpha\cdot \beta)}, U)>0$.}\label{Positivecase}
We first consider the case where that Euler characteristic is positive. This case is labeled in \cite{CHW} as $(\xi,\xi_{\alpha\cdot\beta})>0$. Throughout this subsection we denote the Gaeta triangle of $U$, induced by (\ref{res1}), as 
\begin{equation*}
F\rightarrow U\rightarrow W[1] \rightarrow \cdot, \end{equation*}
where $F=E_{-(\alpha \cdot \beta)}^{\chi\left(E_{-(\alpha \cdot \beta)},U\right)}$ and $W$ is the complex \[E_{-\alpha-3}^{-\chi\left(E_{-((\alpha \cdot \beta)\cdot \beta)},U\right)}\to E_{-\beta}^{\chi(E_{-\beta},U)}\] sitting in degrees $-1$ and $0$. We need the following technical lemma.

\begin{lem}\label{INEQUALITY1}
Let $G$ be the left endpoint logarithmic Chern character of $E_{\alpha \cdot \beta} = E^*(d)$ as in Definition (\ref{Cherncharacter}).
Then $\chi(G^*,\xi) < 0$ if and only if the following inequality holds:
\[\frac{1}{2} d^2 +  \left(\sqrt{2 \Delta_E + \frac{5}{4}} - \mu_E+\mu_U\right)d + \frac{1}{2}\mu_E^2-\mu_E\left( \sqrt{2 \Delta_E + \frac{5}{4}}+\mu_U\right)  +  \Delta_E+\frac{1}{2}\mu_U^2 +\mu_U\sqrt{2 \Delta_E + \frac{5}{4}}<n. \]
\end{lem}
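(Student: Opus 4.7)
The plan is a direct unfolding of Definition \ref{Cherncharacter} and the Riemann--Roch pairing \eqref{CHI}, carried out in three short algebraic steps.

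First, I would compute the $(\mu,\Delta)$-coordinates of $G$. By Definition \ref{Cherncharacter}, $G$ lies on the parabola $\chi(E_{\alpha\cdot\beta},\,\cdot\,)=0$ intersected with $\Delta=\tfrac12$, at the branch closest to $E_{\alpha\cdot\beta}$ on its left. Since $E_{\alpha\cdot\beta}=E^{*}(d)$ with $\mu_{E}\in[0,1)$, we have $\mu_{E_{\alpha\cdot\beta}}=d-\mu_{E}$ and $\Delta_{E_{\alpha\cdot\beta}}=\Delta_{E}$. Setting $\Delta_{G}=\tfrac12$ and writing $x:=\mu_{G}-(d-\mu_{E})$, the equation $\chi(E_{\alpha\cdot\beta},G)=0$ coming from \eqref{CHI} becomes the quadratic $(x+1)(x+2)=2\Delta_{E}+1$. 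The root closest to $E_{\alpha\cdot\beta}$ is $x=-\tfrac32+\sqrt{2\Delta_{E}+\tfrac54}$, yielding
\[\mu_{G}\;=\;d-\mu_{E}-\tfrac32+\sqrt{2\Delta_{E}+\tfrac54},\qquad \Delta_{G}=\tfrac12.\]

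Second, I would translate the sign condition $\chi(G^{*},\xi)<0$ into a polynomial inequality. Using \eqref{CHI} with $\mu_{G^{*}}=-\mu_{G}$, $\Delta_{G^{*}}=\tfrac12$, $\mu_{\xi}=\mu_{U}$, and $\Delta_{\xi}=n$, and noting that both ranks appearing are positive, the sign of $\chi(G^{*},\xi)$ coincides with the sign of
\[\tfrac12\bigl(\mu_{U}+\mu_{G}+1\bigr)\bigl(\mu_{U}+\mu_{G}+2\bigr)-\tfrac12-n.\]
Thus $\chi(G^{*},\xi)<0$ is equivalent to $\tfrac12(\mu_{U}+\mu_{G}+1)(\mu_{U}+\mu_{G}+2)-\tfrac12<n$.

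Finally, I would expand this inequality into the form displayed in the statement. Introducing $Y:=\mu_{U}+d-\mu_{E}+\sqrt{2\Delta_{E}+5/4}$, so that $\mu_{U}+\mu_{G}=Y-\tfrac32$, the left-hand side collapses to $\tfrac12 Y^{2}-\tfrac58$. Squaring $Y$, substituting $\bigl(\sqrt{2\Delta_{E}+5/4}\bigr)^{2}=2\Delta_{E}+5/4$, and dividing by $2$, the constants $5/8$ cancel and each remaining monomial matches the corresponding term of the inequality in the lemma. The argument contains no deep step; the only point requiring care is the branch choice for $G$, which is unambiguously the root $-\tfrac32+\sqrt{2\Delta_{E}+5/4}$ because $\Delta_{E}<\tfrac12$ for every exceptional bundle forces $\sqrt{5+8\Delta_{E}}<3$, making this root negative and placing $G$ to the immediate left of $E_{\alpha\cdot\beta}$ as required.
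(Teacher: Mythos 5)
Your proposal is correct and follows essentially the same route as the paper: solve $\chi(E^*(d),G)=0$ together with $\Delta_G=\tfrac12$ to get $\mu_G=d-\mu_E-\tfrac32+\sqrt{2\Delta_E+\tfrac54}$, observe that $\chi(G^*,\xi)<0$ is equivalent to $\tfrac12(\mu_U+\mu_G+1)(\mu_U+\mu_G+2)-\tfrac12<n$, and substitute. Your explicit justification of the branch choice and the clean expansion via $Y=\mu_U+d-\mu_E+\sqrt{2\Delta_E+5/4}$ are slightly more detailed than the paper's, but the argument is the same.
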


\begin{proof}
The log Chern character of $G$ is defined in the $(\mu,\Delta)$-plane by the two equations $\Delta = \frac{1}{2}$ and $\chi\left( E_{\alpha \cdot \beta},G\right) = 0$, where $E_{\alpha\cdot\beta} = E^*(d)$. These two equations can be combined to get 
\begin{equation*}
    \begin{aligned}
\frac{1}{2}(\mu_G+\mu_E-d+1)(\mu_G+\mu_E-d+2)-\Delta_E =\frac{1}{2},       
    \end{aligned}
\end{equation*}
which implies that $\mu_G =  d-\mu_E - \frac{3}{2} + \sqrt{2 \Delta_E + \frac{5}{4}}$.
Since we have that $\chi(G^*,U) <0$ if and only if 
\[\frac{1}{2}(\mu_U+\mu_G+1)(\mu_U+\mu_G+2)-\frac{1}{2}-n<0,\]
then the lemma follows when substituting the value of $\mu_G$ into the previous inequality. 
\end{proof}

\begin{prop}
\label{lem: pos Betti}
Let $U\in M(\xi)$ be a general element with associated controlling exceptional slope $\alpha\cdot\beta$. If $\chi(E_{-(\alpha\cdot \beta)}, U) > 0$, then the Betti numbers of $U$ are the sum of the left-hand side Betti numbers of $F$ and the right-hand side Betti numbers of $W$.  
\end{prop}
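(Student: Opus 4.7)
\medskip\noindent
\textbf{Proof plan.} My plan is to realize a free resolution of $U$ as the mapping cone of a chain map between the left minimal free resolution of $F$ and the right minimal free resolution of $W$, and to deduce the Betti numbers from this construction.

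\medskip\noindent
First, rotating the Gaeta triangle \eqref{kfu} gives the distinguished triangle $W \to F \to U \to W[1]$ in $D^b(\PP^2)$, so $U$ is quasi-isomorphic to the mapping cone of some morphism $\phi\colon W \to F$. Fix a left MFR of $F$ and a right MFR of $W$,
\[
0 \to P_1 \to P_0 \to F \to 0, \qquad 0 \to W \to Q_0 \to Q_1 \to 0,
\]
where each $P_i, Q_j$ is a direct sum of line bundles on $\PP^2$. Since these are acyclic resolutions, $\phi$ lifts to a chain map between the complex $[Q_0 \to Q_1]$ (concentrated in degrees $0,1$) and the complex $[P_1 \to P_0]$ (concentrated in degrees $-1,0$), and the mapping cone of this chain map is a two-term complex of direct sums of line bundles quasi-isomorphic to $U$. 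Since $U$ is a sheaf concentrated in degree $0$, this yields an exact sequence
\[
0 \to P_1 \oplus Q_0 \xrightarrow{\Phi} P_0 \oplus Q_1 \to U \to 0,
\]
which is a (possibly non-minimal) free resolution of $U$. The degrees of line bundle summands in $P_\bullet$ can be read off from the MFR of the exceptional bundle $E_{-(\alpha\cdot\beta)}$ (raised to the power $m = \chi(E_{-(\alpha\cdot\beta)},U)$), while those in $Q_\bullet$ can be read off from the right MFRs of $E_{-\alpha-3}$ and $E_{-\beta}$ together with the defining sequence of $W$.

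\medskip\noindent
The core of the argument reduces to showing that this resolution is minimal, i.e., that no common line bundle summand can be canceled between $P_0$ and $Q_0$ or between $P_1$ and $Q_1$. The diagonal blocks of $\Phi$, corresponding to the maps $P_1 \to P_0$ in the MFR of $F$ and $Q_0 \to Q_1$ in the right MFR of $W$, have entries in the maximal ideal by the minimality of the given resolutions. The only possible source of a constant entry is the off-diagonal block $Q_0 \to P_0$ lifting $\phi$. To rule this out, I would use Lemma \ref{INEQUALITY1} together with the hypothesis that $E_{-(\alpha\cdot\beta)}$ is the controlling exceptional bundle and that $\chi(E_{-(\alpha\cdot\beta)}, U) > 0$: these constraints locate the left endpoint Chern character $G$ of $E_{\alpha\cdot\beta}$ relative to $\xi$ so as to pin down the line bundle degrees occurring in $P_\bullet$ and $Q_\bullet$ to narrow, disjoint ranges, precluding cancellation.

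\medskip\noindent
Once minimality is established, the left-hand side Betti numbers of $U$ split, by construction of $\Phi$, into contributions from $P_\bullet$ (which are exactly the left-hand side Betti numbers of $F$) and contributions from $Q_\bullet$ (which are exactly the right-hand side Betti numbers of $W$), giving the claimed additivity. The main obstacle I anticipate is the degree-disjointness step needed for minimality: it requires a careful case analysis using Lemma \ref{INEQUALITY1} to ensure that the line bundle degrees in $P_0$ and $Q_0$ (and in $P_1$ and $Q_1$) do not overlap, using precisely the positivity hypothesis $\chi(E_{-(\alpha\cdot\beta)},U) > 0$ to separate the relevant ranges.
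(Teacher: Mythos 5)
Your route (build the mapping cone of a lift $W\to F$ of resolutions and prove minimality directly) is essentially the paper's Proposition \ref{lem: pos matrix}, run in the opposite order from the paper, which first proves the Betti-number identity purely numerically and only then uses it to conclude that the mapping-cone resolution is minimal. The order you propose could in principle work, but the step you lean on is wrong: the line bundle degrees coming from $F$ and from $W$ are \emph{not} disjoint -- they coincide. For $\mu_E\in[0,\tfrac12)$ one has $F_{-1}=\mathcal{O}_{\PP^2}(-d)^{\,\cdot}$ and $L_0=\mathcal{O}_{\PP^2}(-d)^{\,\cdot}$, while $F_{-2}$ and $L_{-1}$ both draw from $\{\mathcal{O}_{\PP^2}(-d-1),\mathcal{O}_{\PP^2}(-d-2)\}$; this coincidence is exactly why the union of the two resolutions is the Gaeta resolution of $U$, so ``disjoint ranges precluding cancellation'' cannot be the mechanism. (If one already knows the shapes, minimality instead follows because the only nonzero off-diagonal block $L_{-1}\to F_{-1}$ has entries of degree $1$ or $2$ and the block $F_{-2}\to L_0$ is zero -- or, as the paper does, from the Betti count itself.)

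The genuine gap is that you never establish those shapes. The substantive content of the proposition is that the right-hand resolution of $W$ (equivalently the minimal free resolution of $W^*$) is concentrated in the same window, with multiplicities given by $-\chi(\mathcal{O}_{\PP^2}(d+3),W^*)$, $\chi(\mathcal{T}_{\PP^2}(d+1),W^*)$, $\chi(\mathcal{O}_{\PP^2}(d+2),W^*)$, all of the correct sign, and that these add to the Gaeta Betti numbers of $U$ (the additivity part is easy, by additivity of $\chi$ and Serre duality). The sign statement is where all the work lies: in the paper it is the system \eqref{INN1}, reduced via Lemma \ref{INEQUALITY1} and Riemann--Roch to a linear inequality in $d$ whose coefficient and root must be controlled using the bounds on $\mu_E$ and $\Delta_E$. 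You invoke Lemma \ref{INEQUALITY1} only to argue the (false) disjointness, not to prove these sign inequalities, so the core of the proof is missing. Two smaller omissions: the case $\mu_E\in[\tfrac12,1)$ (where the Betti pattern is $\alpha_{1,d},\alpha_{2,d},\alpha_{2,d+1}$) and the cases where $W$ has rank $0$ or $1$, where $W$ is only a complex and the dualization picks up torsion, need separate treatment.
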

\begin{proof}
We first work out the case when $W$ is a complex of rank at least 2.  This implies $W$ can be assumed general in its moduli space as it fits into a resolution of type \eqref{res1} or \eqref{res2}.
In particular, it is a vector bundle with a Gaeta minimal free resolution. Likewise, $F$ is general sheaf in its moduli space.

Let $\alpha_{i,j}$ and $\beta_{i,j}$ be the left-hand side Betti numbers of $U$ and $F$, respectively.
Similarly, let $\gamma_{i,j}$ be the right-hand side Betti numbers of $W$; these numbers are equal to the left-hand side Betti numbers of $W^*$. 

\medskip\noindent
Since $U$ is general, it has Gaeta's resolution and it has at most three nonzero left-hand side Betti numbers, either $\{\alpha_{1,d}, \alpha_{1,d+1}, \alpha_{2,d+1}\}$ or $\{\alpha_{1,d}, \alpha_{2,d}, \alpha_{2,d+1}\}$.
Let us work out the former case, $\{\alpha_{1,d},\alpha_{1,d+1},\alpha_{2,d+1}\}$, and comment on the latter afterward. 
In this case, $\mu_E \in [0,\frac{1}{2})$ and it follows from the Beilinson's spectral sequence, with respect to $\{\mathcal{O}_{\PP^2}(-d),E_{\frac{1}{2}}(-d),\mathcal{O}_{\PP^2}(-d+1)\}$, that these numbers are:  
\[\alpha_{1,d} = \chi\left(\mathcal{O}_{\PP^2}(-d),U\right),\quad \alpha_{1,d+1} = -\chi\left(E_{\frac{1}{2}}(-d),U\right), \quad \alpha_{2,d+1} = -\chi\left(\mathcal{O}_{\PP^2}(-d+1),U\right).\]

\medskip\noindent
We then define: \[\beta_{1,d} = \chi\left(\mathcal{O}_{\PP^2}(-d),F\right), \quad \beta_{1,d+1} = -\chi\left(E_{\frac{1}{2}}(-d),F\right),\quad \beta_{2,d+1} = -\chi\left(\mathcal{O}_{\PP^2}(-d+1),F\right),\]
\[\gamma_{1,d} = -\chi\left(\mathcal{O}_{\PP^2}(d+3),W^*\right),\quad \gamma_{1,d+1} = \chi\left(E_{\frac{1}{2}}(d+2),W^*\right), \quad \gamma_{2,d+1} = \chi\left(\mathcal{O}_{\PP^2}(d+2),W^*\right).\]
There is no conflict in notation as after we have shown these have the correct sign, it follows that those are in fact the (right-hand side) Betti numbers. 

\medskip\noindent
We must check that these Euler characteristics sum correctly and have the correct sign. 
We first check that they sum correctly.
In other words, we aim to show that the following three equalities of non-negative integers hold:
\begin{align}\label{BETTI1}
\begin{split}
\alpha_{1,d}&=\beta_{1,d}+\gamma_{1,d},\\ \alpha_{1,d+1}&=\beta_{1,d+1}+\gamma_{1,d+1}, \\ 
\alpha_{2,d+1}&=\beta_{2,d+1}+\gamma_{2,d+1}.
\end{split}
\end{align}
Substituting the values in (\ref{BETTI1}), these equations give:
\begin{align}\label{INEQUALITIES1}
\begin{split}
\chi\left(\mathcal{O}_{\PP^2}(-d),U\right) &=  \chi\left(\mathcal{O}_{\PP^2}(-d),F\right)-\chi\left(\mathcal{O}_{\PP^2}(d+3),W^*\right), \\
-\chi\left(E_{\frac{1}{2}}(-d),U\right)    &= -\chi\left(E_{\frac{1}{2}}(-d),F\right)+\chi\left(E_{\frac{1}{2}}(d+2),W^*\right),\\
-\chi\left(\mathcal{O}_{\PP^2}(-d+1),U\right)    &= -\chi\left(\mathcal{O}_{\PP^2}(-d+1),F\right)+\chi\left(\mathcal{O}_{\PP^2}(d+2),W^*\right).
\end{split}\end{align}

\medskip\noindent
These three equations follow from the additivity of the relative Euler characteristic and Serre duality. Indeed, using the left-hand side triangle in (\ref{kfu}), we have that $\chi\left(V,U\right) =  \chi\left(V,F\right)-\chi\left(V,W\right)$ by the additivity of the relative Euler characteristic for all complexes of sheaves. By Serre duality, we have $\chi(V,W) = \chi(W,V \otimes \omega_{\mathbb{P}^2})$ for all locally free sheaves $V$ and $W$. This duality, and the definition of the relative Euler characteristic, imply that $\chi(W,V(-3)) = \chi(W^* \otimes V(-3)) = \chi(V^*(3),W^*)$. Substituting $\mathcal{O}_{\PP^2}(-d)$, $E_{\frac{1}{2}}(-d)$, and $\mathcal{O}_{\PP^2}(-d+1)$ in for $V$  gives the three equalities.

\medskip\noindent
Let us now examine the signs. Since $F = E(-d)^{\chi\left(E(-d),U\right)}$ by \cite[Theorem 5.7]{CHW}, we immediately have that 
\begin{equation*}
    \begin{aligned}
       \chi\left(\mathcal{O}_{\PP^2}(-d),F\right)\geq 0, \quad \chi\left(E_{\frac{1}{2}}(-d),F\right)\leq 0,\quad \chi\left(\mathcal{O}_{\PP^2}(-d+1),F\right) \leq 0.
    \end{aligned}
\end{equation*}

\medskip\noindent
Let us now examine the signs of the three Euler characteristics involving $W^*$. 
The Euler characteristics involving $W^*$ have the sign we claimed above if and only if the following three inequalities hold 
\begin{equation}\label{INN1}
\begin{aligned}
0 &\leq \chi\left(\mathcal{O}_{\PP^2}(-d),U\right) -\chi\left(\mathcal{O}_{\PP^2}(-d),E(-d)\right)\chi\left(E(-d),U\right),\\
0 &\geq \chi\left(E_{\frac{1}{2}}(-d),U\right)   -\chi\left(E_{\frac{1}{2}}(-d),E(-d)\right)\chi\left(E(-d),U\right),\\
0 &\geq\chi\left(\mathcal{O}_{\PP^2}(-d+1),U\right)  -\chi\left(\mathcal{O}_{\PP^2}(-d+1),E(-d)\right)\chi\left(E(-d),U\right).
\end{aligned}
\end{equation}

\medskip\noindent
Each of these inequalities follows from the fact that $\chi\left(G^*, U\right) \leq 0$, where $G$ is the left endpoint logarithmic Chern character for $E^*(d)$. Let us show that the first inequality above holds; we omit the proofs that the other two inequalities hold because they are similar and no difficulty arises.

\medskip\noindent
Let $\xi = (s, \mu_{\xi}, n)$ denote the Chern character of $U$; expanding the first inequality above, using the Riemann-Roch formula, we have that 
\[s\left(A-n\right)-\left(r_E(\frac{1}{2}(\mu_E+1)(\mu_E+2)-\Delta_E) \right)\cdot s\left(r_E(\frac{1}{2}(\mu_{\xi}-\mu_E+d+1)(\mu_{\xi}-\mu_E+d+2)-\Delta_E-n)\right) \geq 0,\]
where $A:=\binom{d+\mu_{\xi}+2}{2}$.

\medskip\noindent
Solving for $n$ gives
\[B:=\frac{\binom{d+\mu_{\xi}+2}{2}-\left(r_E\left(\frac{1}{2}(\mu_{\xi}-\mu_E+d+1)(\mu_{\xi}-\mu_E+d+2)-\Delta_E\right)\right)\chi(E)}{(1-r_E\chi(E))} \leq n.\]
Since $\chi(E_{-(\alpha\cdot\beta)}, U)>0$, which implies that $\chi(G^*,U)<0$, then applying Lemma \ref{INEQUALITY1} simplifies this expression to
\[ \frac{1}{2} d^2 +  \left(\sqrt{2 \Delta_E + \frac{5}{4}} - \mu_E+\mu_{\xi}\right)d + \frac{1}{2}\mu_E^2-\mu_E\left(\sqrt{2 \Delta_E + \frac{5}{4}}+\mu_{\xi}  \right) +  \Delta_E +\frac{1}{2}\mu_{\xi}^2 +\mu_{\xi}\sqrt{2 \Delta_E + \frac{5}{4}}\geq B.\]


\noindent
Simplifying yields
\[0\leq -(1-r_E\chi(E))\left( \frac{1}{2}\mu_E^2-\mu_E \sqrt{2 \Delta_E + \frac{5}{4}}  -\mu_E\mu_{\xi}+  \Delta_E+\frac{1}{2}\mu_{\xi}^2 +\mu_{\xi}\sqrt{2 \Delta_E + \frac{5}{4}}\right)\]\[+ 1-\chi(E)\chi(E(-3)) +\left(1-r\chi(E)\right)\frac{1}{2}(\mu_{\xi}+3)\mu_{\xi}+r\chi(E)\mu_{\xi}\mu_E\]\[+ \left(\left(\mu_{\xi}+\frac{3}{2}-r_E\chi(E)(\mu_{\xi}+\frac{3}{2}-\mu_E)\right)-(1-r_E\chi(E))\left(\sqrt{2 \Delta_E + \frac{5}{4}} - \mu_E+\mu_{\xi}\right) \right)d,\]
which is a linear inequality in $d$. Thus, it suffices to show that the coefficient of $d$ is positive and that the entire expression vanishes at a value less than 1.  In order to do this, observe that the coefficient of $d$ can be written in terms of $\Delta_E$ and $\mu_E$. Using the bounds $\frac{3}{8} \leq \Delta_E < \frac{1}{2}$ and $\frac{3-\sqrt{5}}{2} < \mu_E < \frac{1}{2}$, we then find no values for which the coefficient is negative. It remains to argue that the zero occurs when $d<1$. Indeed, such a zero occurs at $d = \mu_E-\mu_{\xi}$, which can be verified by substituting this value into the expression. 
This implies that for all $d>1$ the first inequality of \eqref{INN1} holds.

\medskip\noindent
The cases where $W$ is a complex of rank $0$ or $1$ can be treated  analogously by considering $W$ and $W^*$ as objects in the derived category. 

\medskip\noindent
Similarly, in the case that the three possibly nonzero left-hand side Betti numbers are $\alpha_{1,d}$, $\alpha_{2,d}$, and $\alpha_{2,d+1}$, the proof is entirely analogous with the signs on the middle inequality flipped ($\mu_E \in \left[\frac{1}{2},1\right)$).
\end{proof}

\medskip\noindent
The next proposition claims that we can recover the Gaeta minimal free resolution out of the Gaeta triangle of \cite{CHW,Hui14}.

\begin{prop}
\label{lem: pos matrix}
Assume $\chi\left(E_{-(\alpha\cdot \beta)}, U\right) > 0$ for $U\in M(\xi)$ and let us denote the Gaeta triangle of $U$, induced by (\ref{res1}), as 
\begin{equation*}
F\rightarrow U\rightarrow W[1] \rightarrow \cdot, \end{equation*}
where $F=E_{-(\alpha \cdot \beta)}^{\chi\left(E_{-(\alpha \cdot \beta)},U\right)}$ and $W$ is considered as a complex when it has rank 0 or 1; and as a vector bundle otherwise. Then a general $U$ has the Gaeta minimal free resolution:
\[0\rightarrow  \mathcal{O}_{\PP^2}(-d-2)^{j_1+j_2} \xrightarrow{M}  \mathcal{O}_{\PP^2}(-d-1)^{l_1+l_2} \oplus \mathcal{O}_{\PP^2}(-d)^{n_1+n_2} \rightarrow U \rightarrow 0\quad \text{ if } \mu_E\in \left[0,\frac{1}{2}\right),\text{ or}\]
\[0\rightarrow    \mathcal{O}_{\PP^2}(-d-2)^{j_1+j_2}\oplus \mathcal{O}_{\PP^2}(-d-1)^{-l_1-l_2} \xrightarrow{M}   \mathcal{O}_{\PP^2}(-d)^{n_1+n_2} \rightarrow U \rightarrow 0 \quad  \text{ if } \mu_E\in \left[\frac{1}{2},1\right),\]
where the matrix $M$ can in both cases be written in the form 
\[
M=\left(
\begin{array}{c|c}
0& A(M) \\ \hline
 B(M) & C(M)
\end{array}\right),
\]
the submatrix $B(M)$ is the map of the minimal free resolution of $E_{-(\alpha \cdot \beta)}^{\chi\left(E_{-(\alpha \cdot \beta)},U\right)}$, and
\begin{equation*}
    \begin{aligned}
 n_1=&-\chi(\mathcal{O}_{\PP^2}(d+3),W^*), &\quad n_2=&\chi(\mathcal{O}_{\PP^2}(-d),F),\\
l_1=&\chi(\mathcal{T}_{\PP^2}(d+1),W^*), &\quad l_2=& -\chi(\mathcal{T}_{\PP^2}(-d-1),F),\\
j_1=&\chi(\mathcal{O}_{\PP^2}(d+2),W^*), &\quad j_2=&-\chi(\mathcal{O}_{\PP^2}(-d+1),F).
    \end{aligned}
\end{equation*}
\end{prop}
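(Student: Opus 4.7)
The plan is to realize the MFR of $U$ as the mapping cone, at the level of line-bundle resolutions, of the morphism $\phi \colon W \to F$ encoded in the Gaeta triangle (recall that $U \simeq \mathrm{Cone}(\phi)$ in $D^b(\PP^2)$, as seen by rotating the triangle to $W \to F \to U \to W[1]$). Since $F = E_{-(\alpha\cdot\beta)}^{\chi(E_{-(\alpha\cdot\beta)},U)}$ is a direct sum of copies of an exceptional bundle, its MFR $R_F$ is explicit and contributes precisely the Betti exponents $j_2, l_2, n_2$. For $W$ (treated as a sheaf when its rank is at least $2$, and otherwise as a two-term complex of exceptional bundles), we resolve each factor by line bundles and totalize to obtain a line-bundle complex $R_W$, whose right-hand-side Betti numbers correspond to $j_1, l_1, n_1$ via the Euler-characteristic formulas stated. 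The identities $\alpha_{1,d} = \beta_{1,d} + \gamma_{1,d}$ and their analogues, established in Proposition~\ref{lem: pos Betti}, guarantee that the total Betti counts are correct.

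Using the projectivity of $R_F$, the morphism $\phi$ lifts to a chain map $\widetilde{\phi} \colon R_W \to R_F$, and $\mathrm{Cone}(\widetilde{\phi})$ is a complex of line bundles quasi-isomorphic to $U$. Since $U$ is torsion-free on $\PP^2$ and hence has projective dimension at most one, this cone collapses under elementary row-and-column operations cancelling trivial summands $[\mathcal{O}_{\PP^2}(-k) \xrightarrow{\mathrm{id}} \mathcal{O}_{\PP^2}(-k)]$ down to a two-term complex. By Proposition~\ref{lem: pos Betti}, the resulting Betti numbers are exactly $j_1 + j_2$, $l_1 + l_2$, $n_1 + n_2$, so the reduced two-term complex is the MFR of $U$.

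The block structure of $M$ then emerges by tracking which summands of the cone come from $R_F$ versus $R_W$. Ordering the columns of $M$ as (contributions from $R_F$, then from $R_W$) and the rows analogously as (contributions from $R_W$, then from $R_F$), the differential of the cone decomposes as $\begin{pmatrix} 0 & A \\ B & C \end{pmatrix}$, where $B = d_{R_F}$ is the MFR map of $F$, $A$ comes from the differential of $R_W$ up to sign, and $C$ encodes the lifted map $\widetilde{\phi}$. The upper-left zero block reflects the defining property of the mapping cone: no differential maps from the $R_F$ columns back into the $R_W$ rows. The case $\mu_E \in [\tfrac{1}{2}, 1)$ proceeds identically after rearranging the Gaeta Betti diagram. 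The main technical obstacle is verifying that the block $B$ survives the minimization of the cone intact (rather than being partly absorbed into contractible summands); this holds for generic $U$, since the MFR of $F$ is already minimal and the cancellations can be arranged to involve only $R_W$-related summands or off-diagonal entries.
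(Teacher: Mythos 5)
Your overall skeleton matches the paper's: both arguments realize the minimal free resolution of $U$ as the mapping cone of a map between line-bundle complexes representing $W$ and $F$, and both use Proposition~\ref{lem: pos Betti} to certify that the resulting ranks are the Gaeta Betti numbers, hence that the resolution is minimal. The difference lies in the choice of line-bundle models, and this is exactly where your argument has a gap. The paper resolves $F$ by its minimal free resolution $0\to F_{-2}\xrightarrow{B} F_{-1}\to F\to 0$ but represents $W$ by its \emph{right-hand} coresolution $0\to W\to L_{-1}\xrightarrow{A} L_0\to 0$ (the dual of the MFR of $W^*$, which is available because the general $W$ is locally free with known resolution). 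With these choices the map $f$ has a single component $C\colon L_{-1}\to F_{-1}$, the cone is \emph{already} the two-term complex $L_{-1}\oplus F_{-2}\xrightarrow{M} L_0\oplus F_{-1}$ with $M=\left(\begin{smallmatrix}0&A\\ B&C\end{smallmatrix}\right)$, the twists are exactly $\mathcal{O}_{\PP^2}(-d),\mathcal{O}_{\PP^2}(-d-1),\mathcal{O}_{\PP^2}(-d-2)$ with exponents $n_1,l_1,j_1$ (which are by definition right-hand Betti numbers, computed from $W^*$), and the zero block and the intact block $B$ are automatic from the cone differential. No minimization step is ever performed.

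Your version instead builds $R_W$ by ``resolving each factor by line bundles and totalizing,'' produces a cone that is a priori a three-term complex, and then asserts that it ``collapses'' to a two-term complex with the block $B$ surviving intact, justified only by genericity. That is the unproved step, and it is not harmless: if $R_W$ is a left resolution of $W$ (which is what resolving and totalizing gives), the line-bundle twists and exponents you get are \emph{not} $n_1,l_1,j_1$ — those are defined via $W^*$ and belong to the coresolution — so the Betti bookkeeping in your second paragraph does not go through as stated; and the cancellation of the extra term could a priori eat into the $F_{-2}$ summands, destroying the claim that $B(M)$ is the MFR map of $F$, which is the whole content of the proposition beyond the Betti diagram. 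Conversely, if you take $R_W$ to be the coresolution (as your use of right-hand Betti numbers implicitly requires), then the cone is already two-term and your ``main technical obstacle'' is vacuous. So the fix is simply to commit to the paper's choice: dualize the MFR of $W^*$ to coresolve $W$, lift $\phi$ to the single component $C$, and conclude minimality and the block structure directly from Proposition~\ref{lem: pos Betti} and the shape of the cone, with the identification $U'=U$ obtained by taking $\ker A$ and $\operatorname{coker} B$ to recover the original resolution \eqref{res1}.
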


\begin{proof}
Let $U$ be a general sheaf in $M(\xi)$ with the Gaeta triangle 
\[ F\overset{g}{\rightarrow} U\rightarrow W[1] \overset{f}{\rightarrow}\cdot, \] where 
$F=E_{-(\alpha \cdot \beta)}^{\chi\left(E_{-(\alpha \cdot \beta)},U\right)}$ and $W$ is the kernel of $g$ as a complex in the derived category $D^b(\PP^2)$.
Let us interpret the map $f$ as a map of complexes. This interpretation will yield the minimal free resolution of $U$ as its mapping cone. This will also reveal that the defining map $M$ is a $\zeta$-map, with $\zeta$ the character of $F$.

\medskip\noindent
Since $W$ is general in its moduli space, if it has rank at least two, then we may assume it is locally free (the cases in which the rank of $W$ is either zero or one are similar but with an extra torsion factor in the complex). Consequently, there is a dual locally free sheaf $W^*$.

\medskip\noindent
Thus, $F$ has a minimal free resolution \[F_{\bullet}: \quad 0 \to F_{-2} \overset{B}{\longrightarrow} F_{-1} \to F\to 0\] and $W^*$ has a minimal free resolution \[0 \to L_0^* \to L_{-1}^* \to W^*\to 0,\] where if $\mu_E \in \left[0,\frac{1}{2}\right)$, then
\begin{equation*}
\begin{aligned}
    F_{-2} &= \mathcal{O}_{\PP^2}(-d-2)^{-\chi(\mathcal{O}_{\PP^2}(-d+1),F)} \oplus \mathcal{O}_{\PP^2}(-d-1)^{\chi(\mathcal{T}_{\PP^2}(-d-1),F)},\\
    F_{-1} &= \mathcal{O}_{\PP^2}(-d)^{\chi(\mathcal{O}_{\PP^2}(-d),F)},\\
    L_{-1}^* &= \mathcal{O}_{\PP^2}(d+1)^{\chi(\mathcal{T}_{\PP^2}(d+1),W^*)} \oplus \mathcal{O}_{\PP^2}(d+2)^{\chi(\mathcal{O}_{\PP^2}(d+2),W^*)},\text{ and }\\
    L_0^* &= \mathcal{O}_{\PP^2}(d)^{-\chi(\mathcal{O}_{\PP^2}(d+3),W^*)},\\
\end{aligned}
\end{equation*}
and if $\mu_E \in \left[\frac{1}{2},1\right)$, then
\begin{equation*}
\begin{aligned}
    F_{-2} &= \mathcal{O}_{\PP^2}(-d-2)^{-\chi(\mathcal{O}_{\PP^2}(-d+1),F)},\\
    F_{-1} &= \mathcal{O}_{\PP^2}(-d)^{\chi(\mathcal{O}_{\PP^2}(-d),F)} \oplus \mathcal{O}_{\PP^2}(-d-1)^{-\chi(\mathcal{T}_{\PP^2}(-d-1),F)},\\
    L_{-1}^* &=  \mathcal{O}_{\PP^2}(d+2)^{\chi(\mathcal{O}_{\PP^2}(d+2),W^*)},\text{ and }\\
    L_0^* &= \mathcal{O}_{\PP^2}(d+1)^{-\chi(\mathcal{T}_{\PP^2}(d+1),W^*)} \oplus\mathcal{O}_{\PP^2}(d)^{-\chi(\mathcal{O}_{\PP^2}(d+3),W^*)}.\\
\end{aligned}
\end{equation*}

\medskip\noindent
Since $W$ is locally free, then $W$ fits into the following complex \[W_{\bullet}: \quad 0 \to W \to L_{-1} \overset{A}{\longrightarrow} L_0 \to 0.\]

\medskip\noindent
Now we can think of the map $f$ as a map of complexes
\begin{center}
    \begin{tikzcd}
        W_{\bullet}: \quad  0 \arrow[r] & L_{-1} \arrow{r}{A} \arrow{rd}{f} & L_0 \arrow{r}{d_0}  &  0 \\  
       F_{\bullet}: \quad      0 \arrow[r] & F_{-2} \arrow{r}{B}  & F_{-1} \arrow{r}{d_{-1}}  &  0, 
    \end{tikzcd}
\end{center}
where we interpret $f: ker(A)\rightarrow coker(B)$ in degree $-1$ and zero otherwise. Since $L_{-1}$ and $F_{-1}$ are sums of line bundles, the map $f$ is determined by a matrix $C$. We can now put these matrices together into another matrix $M$ as follows: \[M=\left(
\begin{array}{c|c}
0  & A \\ \hline
 B & C 
\end{array}\right).\]
Consequently there is a sheaf $U'$ which has the free resolution
\begin{equation}\label{eq: summand}
0\rightarrow L_{-1}\oplus F_{-2} \overset{M}{\longrightarrow} L_0\oplus F_{-1} \rightarrow U' \rightarrow 0.
\end{equation} 
It follows from Proposition \ref{lem: pos Betti} that this is the minimal free resolution of $U'$. Indeed, this follows from the fact that the matrices $A$, $B$ and $C$ are full rank. By taking the kernel of $A$ and cokernel of $B$, the matrix $M$ yields a vector bundle resolution of $U'$ which is exactly the same resolution as we started with, corresponding to $U$. 
Thus $U' = U$, and the free resolution \eqref{eq: summand} is the Gaeta resolution of $U$.
\end{proof}

\begin{cor}
With the notation of Proposition \ref{lem: pos matrix} the map $f$ yields a map between complexes $W_{\bullet}\overset{f}{\longrightarrow} F_{\bullet}$ and its mapping cone $M(f)_{\bullet}$ is the minimal free resolution of $U$ sitting in degrees $-2,-1,0$. Hence, the map $f$ induces an exact triangle in $D^b(\PP^2)$
\[W_{\bullet}\overset{f}{\longrightarrow} F_{\bullet}\to U_{\bullet}\to .\]
\end{cor}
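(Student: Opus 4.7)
The plan is to directly identify the mapping cone of $f$ with the resolution constructed in Proposition \ref{lem: pos matrix}. First I would verify that the map $f$ built in the proof of that proposition is genuinely a chain map $W_\bullet\to F_\bullet$ and not merely a map between the middle terms. Because $F_\bullet$ is a bounded complex of projective objects in $\mathrm{Coh}(\PP^2)$, every morphism $W\to F$ in $D^b(\PP^2)$ lifts to an honest chain map, and minimality lets me take the lift to have its only nontrivial component equal to the matrix $C$ from the proof of Proposition \ref{lem: pos matrix}; the chain-map compatibility is then automatic, since the lift has no component landing in $F_{-2}$ and $W_\bullet$ has no terms in degree $0$.

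Next I would write the mapping cone out explicitly as the complex
\[0\longrightarrow L_{-1}\oplus F_{-2}\xrightarrow{\,d_{M(f)}\,} L_0\oplus F_{-1}\longrightarrow 0,\qquad d_{M(f)}=\begin{pmatrix}-A & 0\\ C & B\end{pmatrix},\]
placed in degrees $-2$ and $-1$. Up to a reordering of rows/columns and an overall sign (neither of which changes the isomorphism class of the complex), this differential is exactly the matrix $M$ from Proposition \ref{lem: pos matrix}, and that proposition identifies $M$ as the matrix defining the minimal free resolution of $U$. Hence $M(f)_\bullet$ is quasi-isomorphic to $U$ in $D^b(\PP^2)$, with $U$ placed in degree $0$, which justifies the statement that the minimal free resolution of $U$ sits in degrees $-2,-1,0$.

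The distinguished triangle
\[W_\bullet\xrightarrow{\,f\,}F_\bullet\longrightarrow M(f)_\bullet\longrightarrow W_\bullet[1]\]
is the standard triangle attached to any morphism of complexes, so substituting the quasi-isomorphism $M(f)_\bullet\simeq U_\bullet$ gives the triangle in the statement. I expect the only real obstacle to be the sign-and-permutation bookkeeping needed to compare the mapping cone differential with $M$; this is cosmetic, but it requires care so that the resulting isomorphism in $D^b(\PP^2)$ is compatible with the original Gaeta triangle, ensuring the boundary map in the new triangle agrees with the one from \eqref{kfu}.
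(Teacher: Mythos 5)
Your proposal follows essentially the same route as the paper: the corollary is just a repackaging of the proof of Proposition \ref{lem: pos matrix}, where $f$ is realized as a map of complexes via the matrix $C$, the assembled matrix $M$ is precisely the mapping cone differential, and that proof already identifies it with the minimal free resolution of $U$, so the stated triangle is the standard cone triangle. One minor slip: sums of line bundles are not projective objects of $\mathrm{Coh}(\PP^2)$, so the lifting of $f$ to a chain map should be justified by the explicit construction of $C$ in the proposition (as the paper does) rather than by projectivity, but this does not affect the argument.
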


\medskip
\subsection{The Negative Case: $\chi(E_{-(\alpha\cdot \beta)}, U ) < 0$}\label{NegativeCase}
The proofs in this subsection are entirely analogous to the previous ones and will have the same structure. We start with a technical lemma. Throughout this subsection we denote the Gaeta triangle of $U$, induced by (\ref{res2}), as 
\begin{equation*}
W\rightarrow U\rightarrow F[1] \rightarrow \cdot, \end{equation*}
where $F=E_{-(\alpha \cdot \beta)-3}^{-m}$ and $W$ is the sheaf defined by 
\[0 \to \left(E_{-\alpha-3}\right)^{\chi\left(E_{-((\alpha\cdot\beta)\cdot \beta)},U\right)}   \to\left(E_{-\beta}\right)^{-\chi\left(E_{-\beta},U\right)} \to W \to 0.\]

\begin{lem}\label{INEQUALITY1 Negative}
Let $G$ be the right endpoint logarithmic Chern character of $E_{\alpha \cdot \beta} = E^*(d)$.
Then $\chi(G^*,U) > 0$ if and only if the following inequality holds
\[\frac{1}{2} d^2 +  \left(3-\sqrt{2 \Delta_E + \frac{5}{4}} - \mu_E+\mu_U\right)d + \frac{1}{2}\mu_E^2-\mu_E\left(3- \sqrt{2 \Delta_E + \frac{5}{4}}+\mu_U\right)  \]\[ +  \Delta_E+\frac{1}{2}\mu_U^2 +\mu_U\left(3-\sqrt{2 \Delta_E + \frac{5}{4}}\right)>n. \]
\end{lem}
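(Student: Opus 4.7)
\medskip\noindent
The plan is to parallel the proof of Lemma \ref{INEQUALITY1}: first identify the slope $\mu_G$ of the right endpoint from its two defining equations, then substitute into the condition on $\chi(G^{*},U)$ to extract the stated inequality.

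\medskip\noindent
To find $\mu_G$, I use that $G$ lies on the Dr\'ezet--Le Potier line $\Delta_G = \tfrac{1}{2}$ and on the parabola through $E_{\alpha\cdot\beta} = E^{*}(d)$ whose near-intersection with $\Delta = \tfrac{1}{2}$ lies to the right of $\mu_{E_{\alpha\cdot\beta}} = d-\mu_E$; this is the parabola dual to the one used in Lemma \ref{INEQUALITY1}. Writing the associated vanishing of a relative Euler characteristic via \eqref{CHI} and solving the resulting quadratic in $\mu_G$ at $\Delta_G = \tfrac{1}{2}$, then selecting the root on the correct side of $\mu_{E_{\alpha\cdot\beta}}$, produces
\[
\mu_G = d - \mu_E + \tfrac{3}{2} - \sqrt{2\Delta_E + \tfrac{5}{4}},
\]
which differs from the left-endpoint formula of Lemma \ref{INEQUALITY1} in the sign preceding $\tfrac{3}{2}$ and in the sign of the square root, as expected geometrically (we cross $E_{\alpha\cdot\beta}$ from the opposite side).

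\medskip\noindent
For the substitution step, the condition $\chi(G^{*},U) > 0$ is equivalent, via \eqref{CHI} together with $\mu_{G^{*}} = -\mu_G$, $\Delta_{G^{*}} = \tfrac{1}{2}$, and $\Delta_U = n$, to
\[
\tfrac{1}{2}(\mu_U + \mu_G + 1)(\mu_U + \mu_G + 2) - \tfrac{1}{2} > n.
\]
Plugging the expression for $\mu_G$ from the previous step into this quadratic and collecting terms in powers of $d$, $\mu_E$, and $\mu_U$ gives the inequality in the statement.

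\medskip\noindent
The only obstacle here is bookkeeping, not ideas: one must simultaneously track the flipped sign choice in the square root (the near intersection lies on the opposite side of $E_{\alpha\cdot\beta}$) and the reversed direction of the inequality (since we are now in the regime $\chi(E_{-(\alpha\cdot\beta)},U) < 0$, complementary to Lemma \ref{INEQUALITY1}). Once these sign conventions are in place, the algebra proceeds in parallel to the positive case and no new input is required.
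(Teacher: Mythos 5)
Your overall strategy is the same as the paper's: the paper's entire proof is a one-line ``analogous to Lemma \ref{INEQUALITY1} for the right endpoint,'' and you correctly identify the right endpoint as the near intersection, on the side $\mu > \mu_{E_{\alpha\cdot\beta}}$, of the parabola $\chi(\,\cdot\,,E_{\alpha\cdot\beta})=0$ with $\Delta=\tfrac12$, giving $\mu_G = d-\mu_E+\tfrac32-\sqrt{2\Delta_E+\tfrac54}$; the reduction of $\chi(G^*,U)>0$ to $\tfrac12(\mu_U+\mu_G+1)(\mu_U+\mu_G+2)-\tfrac12>n$ is also correct.

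The problem is your final sentence, where you assert that the substitution ``gives the inequality in the statement'' and dismiss the rest as bookkeeping: it does not. Write $S=\sqrt{2\Delta_E+\tfrac54}$ and $a=d+\mu_U-\mu_E$. Substituting $\mu_G=d-\mu_E+\tfrac32-S$ gives $\tfrac12(\mu_U+\mu_G+1)(\mu_U+\mu_G+2)-\tfrac12=\tfrac12\bigl(a+3-S\bigr)^2-\tfrac58=\tfrac12a^2+(3-S)a+\Delta_E+\tfrac92-3S$, whereas the displayed left-hand side of the lemma expands to $\tfrac12a^2+(3-S)a+\Delta_E$. In Lemma \ref{INEQUALITY1} the constant collapses to $\Delta_E$ precisely because $\tfrac12S^2-\tfrac58=\Delta_E$; after flipping $S$ to $3-S$ this cancellation fails and leaves the extra constant $\tfrac92-3S$, which is strictly positive since $\Delta_E<\tfrac12$. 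So the exact statement you would obtain is the displayed inequality with the additional term $\tfrac92-3\sqrt{2\Delta_E+\tfrac54}$ on the left (equivalently, the printed lemma looks like Lemma \ref{INEQUALITY1} with $S$ formally replaced by $3-S$, which is not what the computation yields). You must either carry this constant through (and flag that the statement as printed needs it) or explain why it can be discarded; as written, the step you wave off as ``only bookkeeping'' would fail to produce the stated inequality, and this is exactly the kind of sign/constant issue your own last paragraph warns about.
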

\begin{proof}
The proof is analogous to the proof of Lemma \ref{INEQUALITY1} but for the right endpoint logarithmic Chern character.
\end{proof}

\begin{prop}
Let $U\in M(\xi)$ be a general element with associated controlling exceptional slope $\alpha\cdot\beta$. If $\chi\left(E_{-(\alpha\cdot\beta)}, U\right) < 0$, then the left-hand side Betti numbers of $U$ are the sum of the left-hand side Betti numbers of $F$ and the right-hand side Betti numbers of $W$.  
\end{prop}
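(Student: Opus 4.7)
The plan is to mirror the proof of Proposition \ref{lem: pos Betti} with the roles of $F$ and $W$ swapped at the level of the Gaeta triangle. First I would reduce to the generic setup: since $U$ is general, the defining short exact sequence
\[0 \to E_{-\alpha-3}^{\chi(E_{-((\alpha\cdot\beta)\cdot\beta)},U)}\to E_{-\beta}^{-\chi(E_{-\beta},U)}\to  W \to 0\]
makes $W$ general in its moduli space. I would first assume that $W$ has rank at least two, so that $W$ is a stable vector bundle; the cases of rank $0$ and $1$ are then handled by treating $W$ as an object of $D^b(\PP^2)$ exactly as in the positive case. Under this assumption, $W$ admits a coresolution $0 \to W \to L_{-1} \to L_0 \to 0$ recording its right-hand side Betti numbers, while $F = E_{-(\alpha\cdot\beta)-3}^{-m}$ is an exceptional bundle (or a line bundle in the line-bundle case of $E_{\alpha\cdot\beta}$) with its own minimal free resolution $0 \to F_{-2} \to F_{-1} \to F \to 0$ recording its left-hand side Betti numbers.

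Second, I would identify each candidate Betti number with a signed relative Euler characteristic by running Beilinson's spectral sequence against the exceptional collection $\{\mathcal{O}_{\PP^2}(-d), E_{1/2}(-d), \mathcal{O}_{\PP^2}(-d+1)\}$ in the subcase $\mu_E \in [0,1/2)$ (the other subcase is entirely analogous with the sign of the middle inequality flipped). Writing $\alpha_{i,j}$, $\beta_{i,j}$, $\gamma_{i,j}$ for the left-hand side Betti numbers of $U$, the left-hand side Betti numbers of $F$, and the right-hand side Betti numbers of $W$ respectively, the three candidate identities $\alpha_{i,j} = \beta_{i,j} + \gamma_{i,j}$ follow from the additivity of the relative Euler characteristic applied to the triangle $W\to U\to F[1]\to\cdot$, which yields
\[\chi(V, U) = \chi(V, W) - \chi(V, F)\]
for any $V \in D^b(\PP^2)$, combined with Serre duality $\chi(V, W) = \chi(W, V(-3))$ to rewrite the $W$-contribution as an Euler characteristic against $W^*$. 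The algebra is formally identical to the positive case, modulo the sign adjustments coming from the reversed orientation of the Gaeta triangle.

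The third and most delicate step is the sign verification: each Euler characteristic must be shown to have the correct sign so that it really is a non-negative Betti number. The contributions coming from $F$ have the correct sign thanks to the exceptionality of $E_{-(\alpha\cdot\beta)-3}$, precisely as in the positive case. The contributions from $W^*$ reduce, after Riemann--Roch expansion, to three linear inequalities in $d$. The essential input needed to verify each of them is Lemma \ref{INEQUALITY1 Negative}: the hypothesis $\chi(E_{-(\alpha\cdot\beta)}, U) < 0$ is equivalent to $\chi(G^*, U) > 0$ for $G$ the \emph{right} endpoint logarithmic Chern character of $E_{\alpha\cdot\beta}$. This replaces the inequality $\chi(G^*,U)<0$ (with $G$ the \emph{left} endpoint) used in the positive case. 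Substituting into the Riemann--Roch expansions, the same kind of calculus-based argument---controlling the coefficient of $d$ via the bounds $3/8 \leq \Delta_E < 1/2$ and the appropriate range for $\mu_E$, and checking that the expression vanishes below $d=1$---yields the required sign.

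I expect the main obstacle to be exactly this sign verification: it requires carefully tracking how the switch from left to right endpoint Chern character propagates through the Riemann--Roch computation, and verifying that the resulting linear inequalities in $d$ have the signs opposite to those of the positive case yet still hold in the admissible range $d \geq 1$. This is the step where the asymmetry between Lemma \ref{INEQUALITY1} and Lemma \ref{INEQUALITY1 Negative} is genuinely felt, and where the new endpoint inequality does all of the real work; the remainder of the argument is structurally identical to the positive case and proceeds by essentially formal manipulation of the Gaeta triangle.
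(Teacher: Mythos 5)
Your overall strategy coincides with the paper's: the paper's proof of this proposition is literally ``analogous to Proposition \ref{lem: pos Betti}, with the key inequality now supplied by the right endpoint,'' and your outline (Beilinson/Euler-characteristic bookkeeping, additivity over the Gaeta triangle, sign verification via Lemma \ref{INEQUALITY1 Negative}) is exactly that template; your threshold ``$d\geq 1$'' versus the paper's $d \geq -3+\mu_E-\mu_U$ is only a minor discrepancy.

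There is, however, a genuine sign/role error at the heart of your decomposition step. For the negative-case triangle $W \to U \to F[1]\to\cdot$, additivity gives $\chi(V,U)=\chi(V,W)-\chi(V,F)$; the term entering with the minus sign is the $F$-term, so it is $F$ whose contribution must be rewritten by Serre duality against $F^*$, i.e.\ $F$ contributes its \emph{right-hand side} Betti numbers, while $W$ contributes its \emph{left-hand side} Betti numbers directly. You do the opposite (you Serre-dualize the $W$-contribution and keep the minimal free resolution of $F$), which is sign-inconsistent and does not reproduce the actual Betti numbers of $U$: in the resolution \eqref{res2} the factor $F=E_{-(\alpha\cdot\beta)-3}^{-m}$ sits in the syzygy (kernel) term, so it can only be recorded through its dualized resolution, exactly as in Proposition \ref{lem: neg matrix}, where $B(M)$ is the resolution map of $W$, $A(M)^{t}$ is the resolution map of $F$, and the exponents $n_1,l_1,j_1$ are Euler characteristics against $F^*$ while $n_2,l_2,j_2$ are against $W$. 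A concrete check: for $\mathcal{I}_{11}$ the controlling bundle is $\OO_{\PP^2}(3)$, $F=\OO_{\PP^2}(-6)$, $W=\mathrm{coker}\left(\OO_{\PP^2}(-5)^2\to\OO_{\PP^2}(-4)^4\right)$, and the Gaeta resolution $0\to\OO_{\PP^2}(-6)\oplus\OO_{\PP^2}(-5)^2\to\OO_{\PP^2}(-4)^4\to\mathcal{I}_{11}\to0$ has $\OO_{\PP^2}(-6)$ as a syzygy, not a generator, so ``left Betti numbers of $F$ plus right Betti numbers of $W$'' cannot equal the Betti numbers of $U$. To be fair, the proposition as printed assigns $F$ and $W$ exactly the roles you used, and this appears to be a slip in the paper itself (it contradicts Proposition \ref{lem: neg matrix}); but for the argument to close you must carry out the analogy with the roles swapped --- dualize $F$, keep $W$ --- after which your endpoint analysis via Lemma \ref{INEQUALITY1 Negative} is the correct remaining work.
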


\begin{proof}
The proof is analogous to the proof of Proposition \ref{lem: pos Betti} where the key inequality utilizes the right endpoint, which makes the analogous inequalities to (\ref{INN1}) to hold once $d \geq -3+\mu_E - \mu_U$.
\end{proof}

\begin{prop}
\label{lem: neg matrix}
If $\chi\left(E_{-(\alpha\cdot\beta)}, U\right) < 0$ for $U\in M(\xi)$ and $U$ fits into the Gaeta triangle induced by (\ref{res2})
$$W\rightarrow U\rightarrow F[1] \rightarrow \cdot,$$
where $F=E_{-(\alpha \cdot \beta)-3}^{-m}$ and $W$ is the sheaf defined above, then a general $U$ has the Gaeta resolution 
\[0\rightarrow  \mathcal{O}_{\PP^2}(-d-2)^{j_1+j_2} \xrightarrow{M}  \mathcal{O}_{\PP^2}(-d-1)^{l_1+l_2} \oplus \mathcal{O}_{\PP^2}(-d)^{n_1+n_2} \rightarrow U \rightarrow 0 \text{ if } \mu_E\in \left(0,\frac{1}{2}\right] \text{ or } \]
\[0\rightarrow   \mathcal{O}_{\PP^2}(-d-2)^{j_1+j_2} \oplus \mathcal{O}_{\PP^2}(-d-1)^{-l_1-l_2} \xrightarrow{M}   \mathcal{O}_{\PP^2}(-d)^{n_1+n_2} \rightarrow U \rightarrow 0  \text{ if } \mu_E\in \left(\frac{1}{2},1\right],\]
where the matrix $M$ can be written as 
\[
M=\left(
\begin{array}{c|c}
0& A(M) \\ \hline
 B(M) & C(M)
\end{array}\right),
\]
where the submatrix $B(M)$ is the map of the minimal free resolution of $W$, the transpose of the submatrix $A(M)$ is the map of the minimal free resolution of $E_{-(\alpha \cdot \beta)-3}^{-\chi\left(E_{-(\alpha \cdot \beta)},U\right)}$, and the exponents are \begin{equation*}
    \begin{aligned}
 n_1=&-\chi(\mathcal{O}_{\PP^2}(d+3),F^*), &\quad n_2=&\chi(\mathcal{O}_{\PP^2}(-d),W),\\
l_1=&\chi(\mathcal{T}_{\PP^2}(d+1),F^*), &\quad l_2=& -\chi(\mathcal{T}_{\PP^2}(-d-1),W),\\
j_1=&\chi(\mathcal{O}_{\PP^2}(d+2),F^*), &\quad j_2=&-\chi(\mathcal{O}_{\PP^2}(-d+1),W).
    \end{aligned}
\end{equation*}
\end{prop}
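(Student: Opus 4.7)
The plan is to mirror the argument of Proposition \ref{lem: pos matrix}, exchanging the roles played by $F$ and $W$. In the present (negative) case, $W$ is a genuine sheaf, defined by the displayed short exact sequence, and hence admits a standard minimal free resolution $W_\bullet: 0 \to W_{-1} \overset{B}{\to} W_0 \to W \to 0$ whose defining matrix is $B$. On the other hand, $F = E_{-(\alpha\cdot\beta)-3}^{-\chi(E_{-(\alpha\cdot\beta)},U)}$ is a direct sum of copies of an exceptional bundle, so dualizing the MFR of $F^*$ presents $F$ as a two-term complex of line bundles whose differential is precisely the transpose of the MFR map of $F$. This dualization is what forces the appearance of the transpose in the identification of $A(M)$.

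First, I would rotate the Gaeta triangle $W \to U \to F[1] \to \cdot$ to the equivalent triangle $F \to W \to U \to F[1]$, extracting a morphism $\varphi: F \to W$. Using the dualized presentation of $F$ together with $W_\bullet$, I would lift $\varphi$ to a morphism of two-term complexes and form its mapping cone. The outcome is a complex of sums of line bundles concentrated in degrees $-2,-1,0$ whose only nonzero cohomology is $U$, and whose differential has exactly the shape
\[
M = \left( \begin{array}{c|c} 0 & A(M) \\ \hline B(M) & C(M) \end{array} \right),
\]
with $B(M)$ inherited from $W_\bullet$, the block $A(M)$ inherited from the dualized presentation of $F$ (so that $A(M)^T$ is the MFR map of $F$), and the off-diagonal block $C(M)$ encoding the chosen lift of $\varphi$.

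Second, I would show that this mapping cone is in fact the minimal free resolution of $U$ with the stated Gaeta Betti diagram. This step relies on the negative-case analogue of Proposition \ref{lem: pos Betti}: one verifies that the relative Euler characteristics defining the exponents $n_i$, $l_i$, $j_i$ carry the correct signs, so that the ranks of the line-bundle summands in the mapping cone sum exactly to the ranks prescribed by Gaeta's formula. With the Betti numbers matching on the nose there is no room for cancellation, and minimality follows from the fact that $A$, $B$, and hence $M$ are full rank (so that taking the kernel of $A$ and the cokernel of $B$ recovers the original triangle with middle term $U$).

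The main obstacle is the right-endpoint sign analysis needed for the negative-case Betti count, i.e.\ establishing inequalities analogous to \eqref{INN1} under the hypothesis $\chi(E_{-(\alpha\cdot\beta)},U)<0$. This is exactly where Lemma \ref{INEQUALITY1 Negative} enters: reducing each required sign statement to a linear-in-$d$ inequality, one checks that the leading coefficient is positive and that the unique zero occurs at $d = -3 + \mu_E - \mu_\xi$, so the inequalities hold throughout the relevant range of $d$. Once these estimates are in place, the identification of the mapping cone with the Gaeta resolution of $U$ and the recovery of the block structure of $M$ are formal, completing the proof.
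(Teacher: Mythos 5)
Your proposal is correct and follows essentially the same route as the paper, whose proof of Proposition \ref{lem: neg matrix} is simply ``entirely analogous to Proposition \ref{lem: pos matrix}'': you spell out exactly that analogy, swapping the roles of $F$ and $W$, using the dualized presentation of $F$ (which is what produces the transpose in $A(M)$), lifting the map from the rotated triangle to a map of two-term complexes, and identifying the mapping cone with the Gaeta resolution via the negative-case Betti count coming from Lemma \ref{INEQUALITY1 Negative} and the bound $d\geq -3+\mu_E-\mu_U$. No substantive deviation or gap to report.
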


\begin{proof}
The proof is entirely analogous to the proof of Proposition \ref{lem: pos matrix}.
\end{proof}

\medskip
\subsection{The exceptional case:}
\label{sec: exceptional case} $\chi(E_{-(\alpha\cdot\beta)}, U) = 0$. In this section the integer $d$ satisfies $E_{\alpha \cdot \beta} = E^*(d)$ and the proofs in this case are analogous to the proofs in the negative case after replacing $E_{\alpha\cdot\beta}$ with $E_{\alpha}$. In this case, the general sheaf $U\in M(\xi)$ admits the following resolution 
\begin{equation}\label{res3}
0 \to (E_{-3-\alpha})^{-\chi\left(U\otimes E_{\alpha}\right)}   \to \left(E_{-\beta}\right)^{\chi(U\otimes E_{\beta})}  \to U \to 0.
\end{equation}

\begin{prop}
Let $U\in M(\xi)$ be a general element with associated controlling exceptional slope $\alpha\cdot\beta$. If $\chi\left(E_{-(\alpha\cdot\beta)}, U\right) = 0$, then the Betti numbers of $U$ are the sum of the left-hand size Betti numbers of $F=\left(E_{-\beta}\right)^{\chi(U\otimes E_{\beta})}$ and the right-hand size  Betti numbers of $W=(E_{-3-\alpha})^{-\chi\left(U\otimes E_{\alpha}\right)}$. 
\end{prop}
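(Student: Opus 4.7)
The plan is to mirror the proof of Proposition \ref{lem: pos Betti}, with the simplification that the hypothesis $\chi(E_{-(\alpha\cdot\beta)},U)=0$ causes the Beilinson contribution attached to $E_{-(\alpha\cdot\beta)}$ to vanish, so that the exceptional triangle
\[E^n_{-\beta}\to U\to E^\ell_{-\alpha-3}[1]\to\cdot\]
already has only two non-trivial vertices. Since $U$ is general, it has a Gaeta resolution; using the Beilinson spectral sequence with respect to the exceptional collection naturally associated to $\xi$ (i.e.\ with $E_{\alpha\cdot\beta}$ replaced by $E_\alpha$, as indicated in the opening of \S\ref{sec: exceptional case}), I would write the three potentially nonzero left-hand side Betti numbers of $U$ as signed Euler characteristics,
\[\alpha_{1,d}=\chi(\mathcal{O}_{\PP^2}(-d),U),\quad \alpha_{1,d+1}=-\chi(E_{1/2}(-d),U),\quad \alpha_{2,d+1}=-\chi(\mathcal{O}_{\PP^2}(-d+1),U),\]
with the analogous expressions in the $\mu_E\in[\tfrac12,1)$ case.

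Next, since $F=(E_{-\beta})^{\chi(U\otimes E_\beta)}$ and $W=(E_{-\alpha-3})^{-\chi(U\otimes E_\alpha)}$ are each direct sums of a single exceptional bundle, their Gaeta Betti numbers are computed exactly as in the proofs of Propositions \ref{lem: pos Betti} and \ref{lem: neg matrix}: the $\beta_{i,j}$ of $F$ are obtained as signed Euler characteristics against $\mathcal{O}_{\PP^2}(-d),\,E_{1/2}(-d),\,\mathcal{O}_{\PP^2}(-d+1)$, and the right-hand side Betti numbers $\gamma_{i,j}$ of $W$ are obtained, by dualizing, as Euler characteristics of $W^*=(E_{\alpha+3})^{-\chi(U\otimes E_\alpha)}$ against $\mathcal{O}_{\PP^2}(d),\,E_{1/2}(d+?),\,\mathcal{O}_{\PP^2}(d+2)$ with the appropriate signs.

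To verify the three equalities $\alpha_{i,j}=\beta_{i,j}+\gamma_{i,j}$, I would combine two ingredients exactly as in \S\ref{Positivecase}: additivity of the relative Euler characteristic along the exceptional triangle gives $\chi(V,U)=\chi(V,F)-\chi(V,E_{-\alpha-3}^\ell)$ for every test object $V$ in the exceptional collection; and Serre duality on $\PP^2$ rewrites $\chi(V,E_{-\alpha-3}^\ell)=\chi(E_{-\alpha-3}^\ell,V(-3))=\chi(V^*(3),W^*)$, turning it into the Euler characteristic that defines $\gamma_{i,j}$. Plugging the three choices of $V$ into this identity produces the three numerical equalities between Betti numbers.

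The main obstacle, as in the positive and negative cases, is the sign analysis: one must show that the quantities one is calling $\beta_{i,j}$ and $\gamma_{i,j}$ are in fact non-negative. The hypothesis $\chi(E_{-(\alpha\cdot\beta)},U)=0$ puts $\xi$ precisely on the parabola $\xi_{\alpha\cdot\beta}^{\perp}$, which forces the relevant endpoint inequality to be governed by the right endpoint Chern character of $E_{\alpha}$ rather than of $E_{\alpha\cdot\beta}$; I would therefore invoke the $E_\alpha$-analogue of Lemma \ref{INEQUALITY1 Negative}. As in the previous two cases, after expanding via Riemann--Roch the positivity of the $\gamma_{i,j}$ reduces to a linear inequality in $d$ whose leading coefficient is positive (checked on the bounds $\tfrac{3}{8}\leq\Delta_{E_\alpha}<\tfrac12$, $0\leq\mu_{E_\alpha}<1$) and whose unique zero occurs at $d=\mu_{E_\alpha}-\mu_\xi<1$, so that the inequality holds for every admissible $d\geq 1$; this is the step I expect to require the most care, but the template from \S\ref{Positivecase} should adapt essentially verbatim.
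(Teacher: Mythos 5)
Your proposal follows the paper's own route: the paper proves this case by declaring it entirely analogous to Proposition \ref{lem: pos Betti}, with the sign analysis governed by the right-endpoint inequality (the $E_\alpha$-analogue of Lemma \ref{INEQUALITY1 Negative}), which is exactly the structure you outline. One minor correction: with the right endpoint the zero of the linear-in-$d$ expression sits at $d=-3+\mu_E-\mu_U$ (as in the negative case), not at $d=\mu_{E_\alpha}-\mu_\xi$, which only enlarges the range of $d$ for which the needed positivity holds, so your argument goes through after this adjustment.
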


\begin{proof}
This proof is entirely analogous to the proof of Proposition \ref{lem: pos Betti}. In this case, the key inequality considers the right endpoint, and the relevant inequalities hold once $d \geq -3+\mu_E - \mu_U$.
\end{proof}

\begin{prop}
\label{lem: exc matrix}
Let $U\in M(\xi)$ such that $\chi(E_{-(\alpha\cdot \beta)}, U) = 0$. Then a general $U$ has the following Gaeta resolution \[0\rightarrow  \mathcal{O}_{\PP^2}(-d-2)^{j_1+j_2} \xrightarrow{M}  \mathcal{O}_{\PP^2}(-d-1)^{l_1+l_2} \oplus \mathcal{O}_{\PP^2}(-d)^{n_1+n_2} \rightarrow U \rightarrow 0, \quad \text{ if } \quad \mu_E\in \left[0,\frac{1}{2}\right) \text{ or } \]
\[0\rightarrow   \mathcal{O}_{\PP^2}(-d-2)^{j_1+j_2} \oplus \mathcal{O}_{\PP^2}(-d-1)^{-l_1-l_2} \xrightarrow{M}   \mathcal{O}_{\PP^2}(-d)^{n_1+n_2} \rightarrow U \rightarrow 0,  \quad \text{ if } \quad \mu_E\in \left[\frac{1}{2},1\right),\]
where the matrix $M$ can be written in the following form 
\[
M=\left(
\begin{array}{c|c}
0& A(M) \\ \hline
 B(M) & C(M)
\end{array}\right),
\]
the submatrix $B(M)$ is the map of the minimal free resolution of $E_{-\beta}^{\chi(U\otimes E_{\beta})}$, the transpose submatrix $A(M)^t$ is the map of the minimal free resolution of $E_{-\alpha - 3}^{-\chi(U\otimes E_{\alpha})}$; the exponents are the same as in Prop \ref{lem: pos matrix}.
\end{prop}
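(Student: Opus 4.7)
The plan is to mimic the proof of Proposition \ref{lem: pos matrix}, replacing the controlling exceptional bundle $E_{\alpha\cdot\beta}$ by $E_\alpha$ throughout. Writing $F = (E_{-\beta})^{\chi(U \otimes E_\beta)}$ and $W = (E_{-3-\alpha})^{-\chi(U \otimes E_\alpha)}$, the short exact sequence \eqref{res3} yields a distinguished triangle $W \to F \to U \to W[1]$ in $D^b(\PP^2)$. Both $F$ and $W$ are direct sums of exceptional bundles, so each admits an explicit minimal free resolution by line bundles: a left resolution $F_\bullet: 0 \to F_{-2} \xrightarrow{B} F_{-1} \to F \to 0$, and, by dualizing the minimal free resolution of $W^*$, a two-term locally free right resolution $W_\bullet: 0 \to W \to L_{-1} \xrightarrow{A} L_0 \to 0$.

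Next, I would lift the morphism $g: W \to F$ in the triangle to a morphism of two-term complexes of line bundles. Because $L_{-1}$ and $F_{-1}$ are direct sums of line bundles and $\Hom(L_{-1}, F_{-2}) = 0$, such a lift is recorded by a single polynomial matrix $C: L_{-1} \to F_{-1}$. Taking the mapping cone produces the two-term complex
\[ 0 \to L_{-1} \oplus F_{-2} \xrightarrow{M} L_0 \oplus F_{-1} \to 0, \qquad M = \left(\begin{array}{c|c} 0 & A \\ \hline B & C \end{array}\right), \]
and the distinguished triangle $W_\bullet \to F_\bullet \to \mathrm{Cone}(g) \to \cdot$, compared with the original triangle, identifies the unique nonzero cohomology of this cone with $U$. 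This already gives the block form claimed in the statement, with $B(M)$ equal to the map of the minimal free resolution of $F = E_{-\beta}^{\chi(U\otimes E_\beta)}$ and $A(M)^t$ equal to the map of the minimal free resolution of $E_{-\alpha-3}^{-\chi(U\otimes E_\alpha)}$.

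Then I would verify that this complex is in fact the Gaeta minimal free resolution of $U$, by checking that the Betti numbers add as prescribed and that $M$ is full rank (so that no trivial summands can be split off). The Betti-number identity is the content of the preceding proposition of this subsection (the exceptional analogue of Proposition \ref{lem: pos Betti}), and minimality follows from the full-rank property of $A$, $B$, and $C$, since none of their entries can be constant: the line-bundle summands in the resolutions of the exceptional bundles $E_{-\beta}$ and $E_{-\alpha-3}$ live in strictly distinct twists. The exponents $n_i$, $l_i$, $j_i$ then match those of Proposition \ref{lem: pos matrix} by additivity of the relative Euler characteristic on the triangle $W \to F \to U$ together with Serre duality, exactly as in the positive case.

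The main obstacle I expect is the sign analysis, namely the analogue of the inequalities \eqref{INN1}, which is needed to ensure that the computed Betti numbers are non-negative and that no cancellation occurs across the block boundaries of $M$. In the exceptional case the relevant geometric point on the Dr\'ezet--Le Potier curve is the right endpoint above $E_\alpha$ rather than the left endpoint above $E_{\alpha\cdot\beta}$, so Lemma \ref{INEQUALITY1 Negative} --- applied with $E_\alpha$ in place of $E_{\alpha\cdot\beta}$ --- should supply the required inequality once $d \geq -3 + \mu_E - \mu_U$, matching the bound used throughout \S \ref{NegativeCase}.
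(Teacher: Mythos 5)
Your proposal is correct and follows the paper's own route: the paper proves this proposition simply by declaring it ``entirely analogous'' to Proposition \ref{lem: pos matrix}, and your argument is exactly that analogy spelled out --- resolve $F=E_{-\beta}^{\chi(U\otimes E_\beta)}$ on the left and $W=E_{-\alpha-3}^{-\chi(U\otimes E_\alpha)}$ on the right, lift the triangle map to a map of two-term complexes of line bundles, take the mapping cone, and check Betti numbers, signs (right endpoint, $d\geq -3+\mu_E-\mu_U$) and minimality as in the positive case. One cosmetic point: the zero block of $M$ is structural in the mapping cone (there is simply no component $F_{-2}\to L_0$ in the data), so the auxiliary claim $\Hom(L_{-1},F_{-2})=0$ is unnecessary (and need not hold), but nothing in your argument depends on it.
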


\begin{proof}
The proof is entirely analogous to the proof of Proposition \ref{lem: pos matrix}.
\end{proof}

\medskip\noindent
Notice that either matrix $B$ or $A$ in the previous Proposition may be the zero-size matrix. This occurs when either $E_{-\beta}$ or $E_{-\alpha-3}$ is a line bundle. In case $E_{-\beta}$ and $E_{-\alpha-3}$ are both line bundles, then the resolution (\ref{res3}) coincides with Gaeta minimal free resolution.

\subsection{Proof of the Main Theorem} 
The previous subsections have shown that the Gaeta resolution can be recovered from the Gaeta triangle. We now finish the proof of Theorem \ref{MAINdetailed} with a uniform argument.

\begin{proof}[Proof of Theorem \ref{MAINdetailed}]
\medskip\noindent
Propositions \ref{lem: pos matrix}, \ref{lem: neg matrix}, and \ref{lem: exc matrix} show that if a sheaf fits into a Gaeta triangle 
\[F \to U \to W[1] \to \cdot \quad \]
with general $F$ and $W[1]$
then it has Gaeta minimal free resolution for which the resolution map $M$ is $\zeta$-map, with $\zeta$ the character of $F$.

\medskip\noindent
The converse holds. Indeed, if $U$ is a Gaeta admissible sheaf, then its minimal free resolution is $\zeta$-admissible. Then the sheaf fits into a triangle
\[F \to U \to W[1]\to \cdot\]
where $F\in M(\zeta)$ with general $F$ and $W[1]$. 
This is the Gaeta triangle of $U$.
\end{proof}

\subsection{The secondary edge of the effective cone}
The results of this section so far have only considered the primary edge of the effective cone. 
The case of the secondary edge is special for ranks $0$, $1$, and $2$; see \cite{CHW}.
For rank at least $3$, 
let us denote by $U^D$ the Serre dual of a sheaf $U$, i.e. $U^D = U^*(-3)$.

\begin{definition}
Let $D_{\tiny{Gaeta}^D}=\overline{\{U\in M(\xi)\ | \ U^D \text{ is not a Gaeta-admissible sheaf} \}}$. 
\end{definition}

\begin{cor}\label{thm: sec eff}
If $\xi^D$ is a rigid Chern character then $D_{\tiny{Gaeta}^D}\subset M(\xi)$ is a divisor and it spans the secondary extremal ray of $\mathrm{Eff}(M(\xi))$.
\end{cor}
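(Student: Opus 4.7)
The plan is to deduce this statement from Corollary \ref{cor: EffRigid}(b) by transporting the information about the primary edge of $\mathrm{Eff}(M(\xi^D))$ back to $M(\xi)$ through the Serre duality functor $(-)^D=(-)^*(-3)$. For $\xi$ of rank at least three, this functor induces an isomorphism of moduli spaces $\Phi\colon M(\xi)\xrightarrow{\sim} M(\xi^D)$ sending $U\mapsto U^D$, and the pullback $\Phi^*$ on N\'eron–Severi spaces interchanges the primary and secondary edges of $\mathrm{Eff}$. This last fact is what makes the whole strategy work: the primary controlling exceptional bundle of $\xi^D$ corresponds, under the symmetry $E\mapsto E^D$, to the secondary controlling exceptional bundle of $\xi$, so the primary Brill--Noether divisor on $M(\xi^D)$ pulls back to the secondary Brill--Noether divisor on $M(\xi)$ (see the discussion of the two endpoints of $\xi^\perp\cap\{\Delta=\tfrac12\}$ in \S\ref{Exceptional bundles} and the symmetry of the effective cone under Serre duality in \cite{CHW}).

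Given this, I would argue as follows. First, by the hypothesis that $\xi^D$ is rigid, Corollary \ref{cor: EffRigid}(b) applied to $M(\xi^D)$ produces an irreducible and reduced divisor $D_{\tiny{Gaeta}}\subset M(\xi^D)$ that spans the primary extremal ray of $\mathrm{Eff}(M(\xi^D))$ and is the base locus of the corresponding primary extremal chamber. Second, since $\Phi$ is an isomorphism, the preimage $\Phi^{-1}(D_{\tiny{Gaeta}})$ is an irreducible and reduced divisor in $M(\xi)$; by the very definition of $D_{\tiny{Gaeta}^D}$ we have $\Phi^{-1}(D_{\tiny{Gaeta}})=D_{\tiny{Gaeta}^D}$, since a sheaf $U\in M(\xi)$ lies in $D_{\tiny{Gaeta}^D}$ precisely when $U^D=\Phi(U)$ fails to be Gaeta admissible. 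Third, because $\Phi^*$ swaps the two extremal rays of $\mathrm{Eff}$, the class of $D_{\tiny{Gaeta}^D}=\Phi^{-1}(D_{\tiny{Gaeta}})$ spans the secondary extremal ray of $\mathrm{Eff}(M(\xi))$.

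The main obstacle, and really the only nontrivial point, is the compatibility between Serre duality on sheaves and the symmetry of the effective cone that swaps the primary and secondary controlling exceptional bundles. Concretely, one must verify that if $E_{\alpha\cdot\beta}$ is the primary controlling exceptional for $\xi^D$, then the secondary controlling exceptional for $\xi$ is $(E_{\alpha\cdot\beta})^D$, and that the Brill--Noether-type divisor $D_V$ on $M(\xi^D)$ corresponds, via the isomorphism $\Phi$, to the Brill--Noether divisor $D_{V^D}$ on $M(\xi)$. This is a direct consequence of the identity $\chi(U\otimes V)=\chi(U^D\otimes V^D)$ combined with the description of the controlling exceptional as the one whose orthogonal parabola meets $\xi^\perp\cap\{\Delta=\tfrac12\}$, and with the fact that the involution $E\mapsto E^D$ on the set of exceptional slopes reverses the order of the two intersection points. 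Once this compatibility is recorded, the three steps above conclude the proof.
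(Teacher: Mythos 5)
Your overall route is exactly the one the paper intends: Corollary \ref{thm: sec eff} is stated without proof precisely because it is meant to follow by applying the primary-edge results to $\xi^D$ and transporting them through the Serre-duality isomorphism $U\mapsto U^D$ of $M(\xi)$ with $M(\xi^D)$, under which $D_{\tiny{Gaeta}^D}$ is by definition the preimage of $D_{\tiny{Gaeta}}\subset M(\xi^D)$ and the primary and secondary edges of the effective cones are exchanged. So the architecture of your three steps matches the paper.

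Two points in your write-up need repair, however. First, the identity you lean on for the key compatibility, $\chi(U\otimes V)=\chi(U^D\otimes V^D)$, is false: since $(U\otimes V)^*(-3)=U^D\otimes V^*$, Serre duality gives $H^i(U\otimes V)\cong H^{2-i}(U^D\otimes V^*)^*$, hence $\chi(U\otimes V)=\chi(U^D\otimes V^*)$, and more generally $\zeta\in\xi^{\perp}$ if and only if $\zeta^*\in(\xi^D)^{\perp}$. So the involution on the interpolating side is $V\mapsto V^*$, not $V\mapsto V^D$ (these differ by a twist by $\mathcal{O}_{\PP^2}(-3)$, which changes cohomological orthogonality), and the Brill--Noether divisor $D_V$ on $M(\xi^D)$ corresponds to $D_{V^*}$ on $M(\xi)$. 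The conclusion you want (duality swaps the two endpoints of $\xi^{\perp}\cap\{\Delta=\tfrac12\}$ and hence the two edges) survives, but only with this corrected bookkeeping. Second, you apply Corollary \ref{cor: EffRigid}(b) to $M(\xi^D)$ citing only the hypothesis that $\xi^D$ is rigid; but part (b) assumes $\chi(E_{-(\alpha\cdot\beta)},U)=0$, which is not the same condition as rigidity (rigidity only says $D_{\tiny{Gaeta}}$ has codimension one). To make that step honest you must either supply the vanishing $\chi(E_{-(\alpha\cdot\beta)},U^D)=0$ in the situation at hand (as the paper does in Corollary \ref{cor: DGaeta}, where purity of the Gaeta resolution forces it), or argue separately that a codimension-one $D_{\tiny{Gaeta}}$ spans the primary extremal ray; as written, the claim that rigidity alone triggers \ref{cor: EffRigid}(b), with its irreducibility, reducedness and base-locus conclusions, is a gap.
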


\medskip\noindent
\textbf{Example:} Let us exemplify the results of this section for the log Chern character  $\xi=(3,\tfrac{2}{3},\tfrac{17}{9})$.
The controlling exceptional slope for the primary edge of $\EFF(M(\xi))$ is $\alpha\cdot \beta=0$ and  $\chi(E_{-(\alpha\cdot\beta)}, U) >0$ for $U\in M(\xi)$.
It follows by Theorem \ref{MAINdetailed}, part (a), that the resolution (\ref{res1}) of a general $U\in M(\xi)$ coincides with the Gaeta minimal free resolution
\[0\to \OO_{\PP^2}(-2)^4\to \OO_{\PP^2}(-1)^{6} \oplus \OO_{\PP^2} \to U\to 0.\]

\noindent
Hence, the the elements of $D_{\tiny{Gaeta}}$ will fail to have this minimal free resolution.

\medskip\noindent
Let us now describe the secondary edge of $\EFF(M(\xi))$. Consider the Serre dual of the previous log Chern character: $\xi^D=(3,-\tfrac{11}{3},-\tfrac{17}{9})$.
In this case, the controlling exceptional slope for the primary edge of $\EFF(M(\xi^D))$ is $\alpha \cdot \beta=\frac{22}{5}$ and $\chi(E_{-(\alpha\cdot\beta)}, U_D)=0$ for $U_D\in M(\xi^D)$. It follows that the Gaeta triangle of a general $U_D\in M(\xi^D)$ comes from the resolution (\ref{res3}):
\[0\to \OO_{\PP^2}(-7) \to E_{\frac{1}{2}}(-5)^{\oplus 2} \to U_D\to 0.\] 

\medskip\noindent
Using the notation of Proposition \ref{lem: exc matrix}, we have that $d=5$, $F = E_{\frac{1}{2}}(-5)^{\oplus 2}$ and  $W_{\bullet}$ is the following complex  $$W_{\bullet}: \quad 0\to \OO_{\PP^2}(-7) \to 0,$$ sitting in degree $-1$.

\medskip\noindent
It follows that the log Chern character $\xi(W_{\bullet}) = (1,-7,0)$. Also,
\begin{equation*}
    \begin{aligned}
 n_1=&-\chi(\mathcal{O}_{\PP^2}(8),W^*)=0, &\quad n_2=&\chi(\mathcal{O}_{\PP^2}(-5),F)=6,\\
l_1=&\chi(\mathcal{T}_{\PP^2}(6),W^*)=0, &\quad l_2=& -\chi(\mathcal{T}_{\PP^2}(-6),F)=-2,\\
j_1=&\chi(\mathcal{O}_{\PP^2}(7),W^*)=1, &\quad j_2=&-\chi(\mathcal{O}_{\PP^2}(-4),F)=0.
    \end{aligned}
\end{equation*}

\medskip\noindent
By Proposition \ref{lem: exc matrix}, the minimal free resolution of $U_D$ is therefore
$$0\to \OO_{\PP^2}(-7)\oplus \OO_{\PP^2}(-6)^2\overset{M}{\to} \OO_{\PP^2}(-5)^{6} \to U_D\rightarrow 0,$$ where the matrix $M$ fits the matrix that minimally resolves $E_{\frac{1}{2}}(-5)^{\oplus 2}$.
In particular, $D_{\tiny{Gaeta}^D}$ is a divisor and it spans the secondary edge of $\EFF(M(\xi))$.

\section{Movable cone of the Hilbert scheme $\PP^{2[n]}$}
\label{sec: mov}
\medskip\noindent
In this section, we provide a new computation of the cone of movable divisors $\MOV(\PP^{2[n]})$, when the minimal free resolution of a general $I_Z\in \PP^{2[n]}$ is pure. 
This happens if and only if $n$ is either a triangular number or a tangential number; see Remark \ref{DivGaeta}. The cone $\Mov(\PP^{2[n]})$ was computed in \cite{LZ} for all $n$ via Bridgeland stability conditions. 
Our approach is different as we use minimal free resolutions and the interpolation program described in the introduction.

\medskip\noindent
We recall that for any $n$, the movable cone $\MOV(\PP^{2[n]})$ is generated by two extremal rays. One of them is generated by the class of the family of all subschemes $Z\in \PP^{2[n]}$ with $Z\cap l\ne \emptyset$, where $l$ denotes a fixed line. The other extremal ray of $\MOV(\PP^{2[n]})$ often coincides with the extremal ray of the effective cone $\EFF(\PP^{2[n]})$ and was computed in \cite{CHW}. 
The cases for which the extremal rays of $\MOV(\PP^{2[n]})$ do not match any of those of $\EFF(\PP^{2[n]})$ are the cases in which $D_{\tiny{Gaeta}}\subset \PP^{2[n]}$ is a divisor; see Definition \ref{DDGaeta}. The cases we analyze here (when Gaeta's minimal free resolution of a general $I_Z\in \PP^{2[n]}$ is pure) fall into this category. It remains to work out the other cases using these methods.

\medskip\noindent
Our description of an extremal divisor of the movable cone consists of two steps. First, we find a bundle $E$ that solves the interpolation problem with respect to the general element of $D_{\tiny{Gaeta}}$.
We then proceed to show that the induced Brill-Noether divisor $D_E$ spans an extremal ray of $\MOV(\PP^{2[n]})$. 
The first step is the difficult one, mainly because being numerically orthogonal $\chi(E\otimes I_Z)=0$ does not necessarily imply that $E\otimes I_Z$ has no cohomology. We single out this property following \cite{Hui14}.

\begin{definition}
A vector bundle $E$ on $\PP^2$ of rank $r$ with $h^0(E)=rn$ is said to satisfy interpolation with respect to $I_Z\in \PP^{2[n]}$ if $E\otimes I_Z$ has no cohomology.
\end{definition}

\medskip\noindent
In order to exhibit the solution to the interpolation problem for a general point $I_Z\in D_{\tiny{Gaeta}}\subset \PP^{2[n]}$ when $n$ is either a triangular or a tangential number we proceed as follows. First, we write the ideal sheaf $I_Z\in D_{\tiny{Gaeta}}$ inside a triangle
\begin{equation}\label{orthogonals}
 W \rightarrow F \rightarrow I_Z\rightarrow .
\end{equation}
where $F$ and $W$ are induced by the minimal free resolution of $I_Z$.

Second, we numerically find the character $\zeta$, in the $(\mu,\Delta)$-plane, that satisfies 
$$\chi(\zeta^*, Ch(F))=\chi(\zeta^*,Ch(W))=0,$$ as long as the rank of the log Chern characters $Ch(F)$ and $Ch(W)$ are both bigger than zero, or
$$\chi( \zeta^*,Ch(F))=\chi(\zeta^*, Ch(I_Z))=0,$$  if $Ch(W)$ has rank zero. Third, we prove that a generic bundle $E$ with character $\zeta$ in fact satisfies interpolation with respect to $I_Z\in D_{\tiny{Gaeta}}$.

\medskip\noindent
The first and second steps above are explained in Theorem \ref{TRI} for triangular numbers and in Theorem \ref{MOV} for tangential numbers. Interpolation is proved in Lemma \ref{InterTri} for triangular numbers and in Lemma \ref{InterpolationTan} for tangential numbers. Since a log Chern character $\zeta$ is enough information to compute the minimal free resolution of a generic bundle $E\in M(\zeta)$, we then begin each of the previous lemmas with the minimal free resolution of $E$.

\begin{lem}\label{InterTri}
Let $n=r(r+1)/2$ be a triangular number. 
The general bundle with resolution \[0\rightarrow \mathcal{O}_{\PP^2}(r-3)^{kr}\rightarrow \mathcal{O}_{\PP^2}(r-2)^{k(2r-1)}\rightarrow E\rightarrow 0\]
satisfies interpolation with respect to the general point in $D_{\tiny{Gaeta}}\subset \PP^{2[n]}$ if $k$ is sufficiently large.
\end{lem}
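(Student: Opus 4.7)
\medskip\noindent
The plan is to tensor the given presentation of $E$ with $I_Z$, reduce the vanishing of the cohomology of $E\otimes I_Z$ to the invertibility of a single induced map on cohomology, and then argue the existence of an invertible pair by semicontinuity once $k$ is large enough.

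\medskip\noindent
Since the presentation of $E$ has locally free terms, tensoring with $I_Z$ preserves exactness, giving
\[
0 \to I_Z(r-3)^{kr} \to I_Z(r-2)^{k(2r-1)} \to E \otimes I_Z \to 0.
\]
For generic $Z\in D_{\tiny{Gaeta}}$, Remark~\ref{DivGaeta} supplies the divisorial Gaeta resolution
\[
0 \to \OO(-r-2)\oplus \OO(-r-1)^{r-3} \to \OO(-r)^{r-2}\oplus \OO(-r+1) \to I_Z \to 0.
\]
Twisting this by $\OO(r-3)$ and by $\OO(r-2)$ and taking long exact sequences (together with $0\to I_Z\to \OO\to \OO_Z\to 0$ for the vanishing of $H^2$) gives $H^0(I_Z(r-j))=H^2(I_Z(r-j))=0$ for $j=2,3$, and $h^1(I_Z(r-3))=2r-1$ and $h^1(I_Z(r-2))=r$. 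Plugging these into the cohomology sequence of the first display collapses it to
\[
0 \to H^0(E\otimes I_Z) \to H^1(I_Z(r-3))^{kr} \xrightarrow{\phi} H^1(I_Z(r-2))^{k(2r-1)} \to H^1(E\otimes I_Z) \to 0,
\]
together with $H^2(E\otimes I_Z)=0$. A direct check shows that both the source and target of $\phi$ have dimension $kr(2r-1)$, so interpolation for $E$ with respect to $I_Z$ is equivalent to $\phi$ being an isomorphism.

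\medskip\noindent
The map $\phi$ is induced by the matrix $M$ of linear forms presenting $E$. Setting $V=\mathbb{C}^{kr}$, $W=\mathbb{C}^{k(2r-1)}$, and regarding $M$ as a linear map $V\to W\otimes H^0(\OO(1))$, it factors as
\[
V\otimes H^1(I_Z(r-3)) \xrightarrow{M\otimes \mathrm{id}} W\otimes H^0(\OO(1))\otimes H^1(I_Z(r-3)) \xrightarrow{\mathrm{id}_W\otimes \mu} W\otimes H^1(I_Z(r-2)),
\]
where $\mu\colon H^0(\OO(1))\otimes H^1(I_Z(r-3))\to H^1(I_Z(r-2))$ is the multiplication map. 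A snake lemma applied to $0\to I_Z\to \OO\to \OO_Z\to 0$, combined with the surjectivity of the classical multiplication $H^0(\OO(1))\otimes H^0(\OO(r-3))\to H^0(\OO(r-2))$, shows that $\mu$ is surjective for generic $Z$.

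\medskip\noindent
By semicontinuity, the locus of pairs $(M,Z)$ for which $\phi$ is an isomorphism is open in the parameter space, so it suffices to exhibit a single such pair. The main obstacle is this explicit construction, and it is precisely here that the hypothesis ``$k$ sufficiently large'' enters: for $k$ large, the space of matrices $M$ has dimension growing quadratically in $k$, whereas the failure of $\phi$ to be an isomorphism is cut out by a single polynomial condition $\det(\phi_M)=0$ in the entries of $M$. I expect to construct $M$ as a block-diagonal matrix assembled from smaller interpolating blocks and then perturb generically; once $k$ is large enough to admit such a construction, surjectivity of $\mu$ together with the dimension match forces $\phi$ to be an isomorphism for the generic pair $(M,Z)$, yielding interpolation.
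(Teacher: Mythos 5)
Your reduction is fine as far as it goes: tensoring the presentation of $E$ with $\mathcal{I}_Z$ (legitimate because the general $E$ is locally free, so $\mathrm{Tor}_1(E,\mathcal{I}_Z)=0$), the cohomology computations $h^1(\mathcal{I}_Z(r-3))=2r-1$, $h^1(\mathcal{I}_Z(r-2))=r$ for generic $Z\in D_{\tiny{Gaeta}}$, and the conclusion that interpolation is equivalent to the square matrix $\phi$ having maximal rank are all correct, as is the observation that by semicontinuity and irreducibility it suffices to exhibit one pair $(M,Z)$. But the decisive step is exactly the one you do not carry out: you never produce such a pair, and the heuristics you offer in its place do not close the gap. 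Surjectivity of the multiplication map $\mu$ is far weaker than injectivity/surjectivity of $\phi$ (indeed a single general linear form already makes $\mu$ surjective for any reduced $Z$, so this argument would ``prove'' interpolation for essentially any linearly presented bundle with $\chi=0$, which is false in general); the remark that $\det(\phi_M)=0$ is ``a single polynomial condition'' in a large space of matrices is vacuous, since the issue is precisely whether that determinant is not identically zero on the family; and the proposed block-diagonal construction is only announced (``I expect to construct''), not performed --- note also that if a single block worked for some small $k_0$, block sums would give all multiples of $k_0$, so ``$k$ sufficiently large'' would play no real role in your scheme, a sign that the hypothesis is not being used where it must be. Finally, any exhibited pair must have $Z$ inside (the open stratum of) $D_{\tiny{Gaeta}}$ for the semicontinuity argument over that divisor to apply, which adds a constraint your sketch does not address.

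For comparison, the paper's proof supplies exactly this missing existence statement by a geometric specialization: it restricts $E$ to a general rational curve $C$ of degree $r-1$ (the resolution gives $H^0(E)\cong H^0(C,E|_C)$ since $E(-r+1)$ has no $h^0,h^1$), invokes Huizenga's restriction theorem \cite[Theorem 2.8]{Hui13} to get $f^*E\cong\mathcal{O}_{\PP^1}(r^2-2r+2)^{k(r-1)}$ for $k\gg0$ --- this is where ``$k$ sufficiently large'' genuinely enters --- and then chooses the special scheme $D$ to consist of the $\binom{r-2}{2}$ nodes of $C$ together with $3r-3$ general points of $C$, so that $\deg f^*D=r^2-2r+3$ and $H^0(\PP^2,E\otimes\mathcal{I}_D)\cong H^0(\PP^1,\mathcal{O}_{\PP^1}(-1))^{k(r-1)}=0$; since $D$ lies on a degree $r-1$ curve it belongs to $D_{\tiny{Gaeta}}$, and $H^2=0$ plus $\chi=0$ and irreducibility of $D_{\tiny{Gaeta}}$ finish the argument. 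If you want to salvage your linear-algebraic setup, you would need an input of comparable strength (e.g., such a restriction theorem or an explicit maximal-rank construction) to certify one pair where $\phi$ is an isomorphism; without it the proposal is a reformulation of the problem, not a proof.
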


\begin{proof} Let $I_Z\in \PP^{2[n]}$ and observe that $\chi(E\otimes I_Z)=0$. We show the vanishing of the cohomology groups $h^i(E\otimes I_D)$, $i=0,1,2$, when $I_D\in D_{\tiny{Gaeta}}$, by reducing the problem to a computation on an irreducible plane rational curve of degree $r-1$.
Given a curve $C\subset \PP^2$ of degree $r-1$, there is an exact sequence
\[0\to E(-r+1)\to E\to E|_C\to0.\]
From the minimal free resolution of $E$ above, we see that $H^0(\mathbb{P}^2,E(-r+1))=H^1(\mathbb{P}^2,E(-r+1))=0$. 
This implies that the restriction morphism
\[H^0(\mathbb{P}^2,E)\to H^0(C,E|_C)\]
is an isomorphism. Let $C$ be the image of a general map $f:\mathbb{P}^1\to\mathbb{P}^2$ of degree $r-1$. Applying \cite[Theorem 2.8]{Hui13} on $E(-r+1)$, noting that 
$\frac{r}{r-1}\in\Phi_2$, we have that 
\[f^*E\cong\mathcal{O}_{\mathbb{P}^1}(r^2-2r+2)^{k(r-1)}\]
for $k$ sufficiently large.

\medskip\noindent 
The curve $C$ has exactly $\binom{r-2}{2}$ nodes as $f$ is general.  Let $D\subset C$ be the divisor which consists of these nodes in addition to $n-\binom{r-2}{2}=3r-3$ general points of $C$.  Then, we have
\[\deg f^*D=2\binom{r-2}{2}+3r-3=r^2-2r+3.\]
Observe that a section of $E|_C$ vanishing on $D$ induces a section of $f^*E$ vanishing on $f^*D$ and vice versa. In other words,
\begin{align*}
    H^0(C,E|_C(-D))&\cong H^0(\mathbb{P}^1,(f^*E)(-f^*D))\\
    &\cong H^0(\mathbb{P}^1,\mathcal{O}_{\mathbb{P}^1}(-1))^{k(r-1)}=0.
\end{align*}
It follows that $H^0(\mathbb{P}^2,E\otimes\mathcal{I}_D)\cong H^0(C,E|_C(-D))=0$. 

\medskip\noindent
Notice that by tensoring the short exact sequence \[0 \to \mathcal{I}_D \to \mathcal{O}_{\PP^2} \to \mathcal{O}_D \to 0\] with $E$, it follows that $H^2(\mathbb{P}^2,E\otimes\mathcal{I}_D) = 0$. Therefore, $E$ satisfies interpolation for $\mathcal{I}_D$.
Since $D_{\tiny{Gaeta}}$ is irreducible, $E$ then satisfies interpolation for the general element in it.
\end{proof}

\medskip\noindent
Observe that the character of the vector bundle $E$ above is on the curve, in the $(\mu,\Delta)$-plane, of characters numerically orthogonal to the character $ch(I_Z)=(0,n)$. 
However, $E$ is not a minimal-slope stable bundle with this property, which is a crucial property in determining the edge of the effective cone $\EFF(\PP^{2[n]})$.
We now use the bundle $E$ to give $\mathrm{Mov}\left(\mathbb{P}^{2[n]}\right)$ when $n$ is a triangular number.

\begin{theorem}\label{TRI} Let $n=r(r+1)/2$ be a triangular number and $n>3$. The (primary) extremal edge of the movable cone $\MOV(\PP^{2[n]})$ is spanned by the divisor class  \[D_{\mathrm{mov}} = \tfrac{r^2 - 2 r + 2}{r - 1}H-\tfrac{1}{2}B= \left(r-1+\tfrac{1}{r - 1}\right)H-\tfrac{1}{2}B .\]
\end{theorem}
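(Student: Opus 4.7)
The plan is to realize $D_{\mathrm{mov}}$ as the Brill--Noether divisor $D_E$ of the bundle $E$ from Lemma~\ref{InterTri} and to certify extremality by a covering-curve argument on $D_{\tiny{Gaeta}}$. First, since $E$ satisfies interpolation for a general $I_\Gamma \in D_{\tiny{Gaeta}}$, upper semicontinuity of $h^1(E\otimes I_Z)$ forces the locus $\{Z : h^1(E\otimes I_Z)=0\}$ to be a nonempty open, and hence dense, subset of the irreducible variety $\PP^{2[n]}$. Thus $\mathrm{Bs}(D_E)$ is a proper closed subset of $D_{\tiny{Gaeta}}$, of codimension at least 2 in $\PP^{2[n]}$, which places $[D_E] \in \MOV(\PP^{2[n]})$. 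A direct Chern-character computation on the resolution $0 \to \OO(r-3)^{kr} \to \OO(r-2)^{k(2r-1)} \to E \to 0$ gives $\mathrm{rk}(E) = k(r-1)$ and $\mu(E) = \tfrac{r^2 - 2r + 2}{r-1}$, whence the standard Brill--Noether formula on $\PP^{2[n]}$ yields $[D_E] = \mu(E) H - \tfrac{1}{2} B$, matching the claimed class.

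For extremality, I would construct a covering curve $\beta$ of $D_{\tiny{Gaeta}}$ with $\beta \cdot D_E = 0$. By Corollary~\ref{cor: DGaeta}, a generic $\Gamma \in D_{\tiny{Gaeta}}$ lies on a unique smooth plane curve $C$ of degree $r-1$ and arithmetic genus $g = \binom{r-2}{2}$. Fix such a $C$ and a generic base-point-free pencil of degree-$n$ divisors on $C$, which exists since $n > g$ for $r \geq 2$, and let $\phi : C \to \PP^1$ be the associated degree-$n$ morphism. Then $\beta := \{\phi^{-1}(t) \subset \PP^2 : t \in \PP^1\}$ is a rational curve contained in $D_{\tiny{Gaeta}}$ with $\beta \cdot H = r-1$ (a general line meets $C$ in $r-1$ distinct points, each yielding exactly one divisor of the pencil through it) and $\beta \cdot B = 2g - 2 + 2n = 2(r^2 - 2r + 2)$, the total ramification of $\phi$ computed by Riemann--Hurwitz. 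A direct calculation yields
\[\beta \cdot D_E = \mu(E)(r-1) - \tfrac{1}{2} \cdot 2(r^2 - 2r + 2) = (r^2 - 2r + 2) - (r^2 - 2r + 2) = 0.\]

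Finally, as $C$ and the pencil vary, these curves $\beta$ sweep out $D_{\tiny{Gaeta}}$. The incidence variety of triples $(C, \text{pencil}, \Gamma \in \text{pencil})$ has dimension $(n-1) + (2n - g - 2) + 1 = 3n - g - 2$, the three summands accounting for the $\PP^{n-1}$ of plane curves of degree $r-1$, for the pencils on a fixed $C$ (dimension $g$ for $L \in \mathrm{Pic}^n(C)$ plus $2(n - g - 1)$ for the Grassmannian of pencils in $\PP(H^0(L))$), and for the point of $\PP^1$. The fiber over a generic $\Gamma \in D_{\tiny{Gaeta}}$ has dimension $n - g - 1$ (pencils on the unique $C \supset \Gamma$ containing the section cutting out $\Gamma$), so the image has dimension $2n - 1 = \dim D_{\tiny{Gaeta}}$. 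Thus $\{\beta\}$ covers $D_{\tiny{Gaeta}}$. Any effective divisor $D$ with $\beta \cdot D < 0$ must then contain $D_{\tiny{Gaeta}}$ in its base locus and hence fails to be movable; together with $\beta \cdot D_E = 0$ and the movability of $D_E$, this places $[D_E]$ on the boundary of $\MOV(\PP^{2[n]})$, finishing the proof.

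The main technical obstacle will be verifying the intersection $\beta \cdot B = 2(r^2 - 2r + 2)$ in the paper's normalization of the boundary class $B$: one must confirm transversality at each ramification point of $\phi$ and keep the normalization compatible with the Brill--Noether formula $[D_V] = \mu_V H - \tfrac{1}{2}B$. A secondary point is the uniqueness of the smooth curve $C$ containing a generic $\Gamma$, which follows from the divisorial Gaeta resolution together with the fact that $h^0(I_\Gamma(r-1)) = 1$ on that stratum.
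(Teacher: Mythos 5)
Your proposal is correct and follows essentially the same route as the paper: the movable class is produced as the Brill--Noether divisor of the interpolating bundle of Lemma \ref{InterTri}, and extremality is certified by the curve induced by a general pencil of $n$ points on a curve of degree $r-1$, which is dual to $D_E$ and sweeps out $D_{\tiny{Gaeta}}$; you merely make explicit the Riemann--Hurwitz intersection numbers and the covering dimension count that the paper leaves implicit. The only wobble is the intermediate claim that $\mathrm{Bs}(D_E)\subset D_{\tiny{Gaeta}}$, which your semicontinuity argument does not actually give and which is not needed: as in the paper, it suffices that $D_E$ is a genuine effective divisor with $D_{\tiny{Gaeta}}\not\subseteq\mathbf{B}(D_E)$ to conclude movability.
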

\begin{proof}
Observe that the minimal free resolution of a general ideal sheaf in $\PP^{2[n]}$ is pure: \[0 \to \mathcal{O}_{\PP^2}(-r-1)^r \to \mathcal{O}_{\PP^2}(-r)^{r+1} \to \mathcal{I}_Z \to 0.\] It follows from the Remark \ref{DDGaeta} that the minimal free resolution of a general element $\mathcal{I}_Z$ in $D_{\tiny{Gaeta}}$ is the \textit{divisorial Gaeta} resolution: 
\[0\rightarrow \mathcal{O}_{\PP^2}(-r-2)\oplus \mathcal{O}_{\PP^2}(-r-1)^{r-3}\rightarrow \mathcal{O}_{\PP^2}(-r)^{r-2}\oplus \mathcal{O}_{\PP^2}(-r+1)\rightarrow \mathcal{I}_Z\rightarrow 0.\]

\medskip\noindent
In the derived category $D^b(\PP^2)$, this resolution is equivalent to the following triangle
\[W\rightarrow \mathcal{O}_{\PP^2}(-r+1)\rightarrow \mathcal{I}_Z\rightarrow .\]
where 
$0\rightarrow \mathcal{O}_{\PP^2}(-r-2)\oplus \mathcal{O}_{\PP^2}(-r-1)^{r-3}\rightarrow \mathcal{O}_{\PP^2}(-r)^{r-2}\rightarrow W \rightarrow 0. $ 

Consider the intersection point in the $(\mu,\Delta)$-plane of the following two parabolas 
\begin{equation*}
    \begin{aligned}
 \chi(\zeta\otimes \OOT(1-r))&=0, \\
 \chi(\zeta\otimes I_Z)&=0.
    \end{aligned}
\end{equation*}
These two equations determine the logarithmic Chern character of a bundle $E$ as in Lemma \ref{InterTri}. It follows from this Lemma \ref{InterTri} that the general bundle $E$ with resolution
\[0\rightarrow \mathcal{O}_{\PP^2}(r-3)^{kr}\rightarrow \mathcal{O}_{\PP^2}(r-2)^{k(2r-1)}\rightarrow E\rightarrow 0,\]
satisfies interpolation for a general point in $D_{\tiny{Gaeta}}$. 
The Brill-Noether divisor $D_E$ induced by $E$ has class \[D_E=\tfrac{r^2-2r+2}{r-1}H-\tfrac{1}{2}B,\] and does not contain $D_{\tiny{Gaeta}}$ in its stable base locus and so is movable. 
Furthermore, this class is dual to the curve induced by a general pencil of $n$ points on a curve of degree $r-1$. 
Since this curve sweeps out $D_{\tiny{Gaeta}}$, it follows that the class $D_E$ is extremal in the movable cone $\Mov(\PP^{2[n]})$.
\end{proof}
\medskip\noindent

\begin{coro}
The bundle $E$ of Lemma \ref{InterTri} solves the interpolation problem for the generic $I_Z\in D_{\tiny{Gaeta}}\subset \PP^{2[n]}$, where $n=\tfrac{r(r+1)}{2}$, $r>1$.
\end{coro}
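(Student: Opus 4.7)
The corollary amplifies Lemma \ref{InterTri}: the latter only asserts that $E$ is cohomologically orthogonal to the generic $I_Z\in D_{\tiny{Gaeta}}$, whereas solving the interpolation problem further requires $E$ to achieve the infimum of slopes $\mu^{+}$ over all bundles cohomologically orthogonal to $I_Z$. The plan is to deduce this minimality directly from the extremality of $[D_E]$ in $\MOV(\PP^{2[n]})$ established in Theorem \ref{TRI}, via a short numerical duality argument followed by a contradiction.

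First, I would record that a direct computation from the minimal free resolution of $E$ yields $\mathrm{rk}(E)=k(r-1)$ and slope $\mu(E)=\frac{r^{2}-2r+2}{r-1}$, which matches the $H$-coefficient of the class $D_{\mathrm{mov}}$ found in Theorem \ref{TRI}. Recall that for any bundle $V$ with $\chi(V\otimes I_Z)=0$ and $h^{0}(V)=\mathrm{rk}(V)\cdot n$, the Brill-Noether divisor $D_V$ has class proportional (by $\mathrm{rk}(V)$) to $\mu(V)\,H-\tfrac{1}{2}B$ in $\mathrm{Pic}(\PP^{2[n]})$; so smaller slopes of $V$ correspond to classes moving away from the $H$-ray toward the opposite edge of the effective cone.

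Now assume, toward a contradiction, that some bundle $V$ with $\mu(V)<\mu(E)$ is also cohomologically orthogonal to the generic $I_Z\in D_{\tiny{Gaeta}}$. Then $D_{\tiny{Gaeta}}\not\subset\mathbf{B}(D_V)$, so $D_V$ is movable. Consider the curve class $C$ from the proof of Theorem \ref{TRI}, namely a general pencil of $n$ points on a degree-$(r-1)$ plane curve; this pencil sweeps out $D_{\tiny{Gaeta}}$ and is dual to $D_E$, i.e., $D_E\cdot C=0$. Since the generic members of $C$ are not in $\mathbf{B}(D_V)$, we have $D_V\cdot C\geq 0$. On the other hand, $H\cdot C>0$ and $B\cdot C>0$ for such a pencil, so $D_V\cdot C=\mu(V)(H\cdot C)-\tfrac{1}{2}(B\cdot C)$. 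Combined with the equality $0=D_E\cdot C=\mu(E)(H\cdot C)-\tfrac{1}{2}(B\cdot C)$ and the strict inequality $\mu(V)<\mu(E)$, this forces $D_V\cdot C<0$, a contradiction. Hence $\mu(E)$ is the infimum and $E$ solves the interpolation problem.

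The key ingredient, the dual curve $C$ sweeping $D_{\tiny{Gaeta}}$, is already supplied by Theorem \ref{TRI}; the only residual subtlety is the positivity of $H\cdot C$ and $B\cdot C$, which is standard for pencils of $n$ points on a general (nodal) degree-$(r-1)$ plane curve of the kind used inside Lemma \ref{InterTri}. I expect verifying this positivity, and in particular that a general such pencil meets $B$ transversely at the expected incidences, to be the most delicate residual step; everything else follows formally from the extremality already established in Theorem \ref{TRI}.
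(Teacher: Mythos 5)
Your argument is correct and is essentially the paper's intended justification: the corollary is stated without a separate proof precisely because cohomological orthogonality at slope $\mu(E)=\tfrac{r^2-2r+2}{r-1}$ is Lemma \ref{InterTri}, and minimality of that slope follows from the dual sweeping pencil of $n$ points on a degree-$(r-1)$ curve constructed in the proof of Theorem \ref{TRI}, which is exactly the numerical duality and contradiction you spell out. One small simplification: only $H\cdot C>0$ is actually needed (the $B$-terms cancel against $D_E\cdot C=0$), and this is immediate since $H\cdot C=r-1$ for such a pencil.
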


\medskip\noindent
Showing interpolation in the tangential number case is more difficult. In fact, rather than showing interpolation for the general element of $D_{\tiny{Gaeta}}$ as before, we do so for special elements.

\medskip\noindent For $n=2s(s+1)$, according to \cite[Theorem 3.13]{E}, there exist ideals of $n$ points with minimal free resolution
\begin{align}\label{qk}
0\to\OO_{\PP^2}(-2s-2)^s\oplus\OO_{\PP^2}(-2s-1)^k\xrightarrow{\phi}\OO_{\PP^2}(-2s-1)^k\oplus\OO_{\PP^2}(-2s)^{s+1}\to\mathcal{I}_Z\to 0,
\end{align}
as long as $k\leq s$. 

\begin{lem}\label{lem: qk sections}
A scheme $Z$ with resolution \eqref{qk} satisfies $h^0(\PP^2,\mathcal{T}_{\PP^2}(2s-2)\otimes\mathcal{I}_Z)= k$.
\end{lem}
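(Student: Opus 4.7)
The plan is to tensor the minimal free resolution \eqref{qk} by $\mathcal{T}_{\PP^2}(2s-2)$, giving the short exact sequence
\[0\to\mathcal{T}_{\PP^2}(-4)^{s}\oplus\mathcal{T}_{\PP^2}(-3)^{k}\xrightarrow{\phi\otimes\mathrm{id}}\mathcal{T}_{\PP^2}(-3)^{k}\oplus\mathcal{T}_{\PP^2}(-2)^{s+1}\to\mathcal{T}_{\PP^2}(2s-2)\otimes\mathcal{I}_{Z}\to 0,\]
and to read off $H^0$ of the right-hand term from the associated long exact sequence in cohomology.

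First I would carry out the cohomology calculation for $\mathcal{T}_{\PP^2}(m)$ with $m\in\{-4,-3,-2\}$ via the Euler sequence $0\to\OO_{\PP^2}\to\OO_{\PP^2}(1)^{\oplus 3}\to\mathcal{T}_{\PP^2}\to 0$, twisted appropriately. This yields $h^0(\mathcal{T}_{\PP^2}(m))=0$ for all three values of $m$, together with $h^1(\mathcal{T}_{\PP^2}(-4))=h^1(\mathcal{T}_{\PP^2}(-2))=0$ and $h^1(\mathcal{T}_{\PP^2}(-3))=1$. Plugging these into the long exact sequence collapses it to
\[0\to H^0(\mathcal{T}_{\PP^2}(2s-2)\otimes\mathcal{I}_{Z})\to\mathbb{C}^{k}\xrightarrow{\delta}\mathbb{C}^{k},\]
where the two copies of $\mathbb{C}^{k}$ come from the $\mathcal{T}_{\PP^2}(-3)^{k}$ summands appearing on each side of $\phi\otimes\mathrm{id}$.

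The one step with real content---and where I expect the work to lie---is the identification of the connecting map $\delta$. By functoriality of the Euler long exact sequence applied summand by summand, $\delta$ should be induced precisely by the block of $\phi$ taking the summand $\OO_{\PP^2}(-2s-1)^{k}$ of the source to the summand $\OO_{\PP^2}(-2s-1)^{k}$ of the target. Since this block is a matrix of degree-zero homogeneous polynomials, i.e.\ of scalars, and since \eqref{qk} is a \emph{minimal} free resolution---so that no differential entry can be a unit---this constant block must vanish. Consequently $\delta=0$, and
\[h^0(\mathcal{T}_{\PP^2}(2s-2)\otimes\mathcal{I}_{Z})=\dim\ker\delta=k,\]
as claimed.
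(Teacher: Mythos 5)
Your proposal is correct and follows essentially the same route as the paper: tensor the minimal resolution \eqref{qk} by $\mathcal{T}_{\PP^2}(2s-2)$, use the vanishing of the cohomology of $\mathcal{T}_{\PP^2}(-4)$ and $\mathcal{T}_{\PP^2}(-2)$ together with $h^1(\mathcal{T}_{\PP^2}(-3))=1$ to reduce to the induced map $H^1(\mathcal{T}_{\PP^2}(-3))^k\to H^1(\mathcal{T}_{\PP^2}(-3))^k$, and conclude it vanishes because the degree-zero block of $\phi$ must be zero by minimality. One small remark: the map $\delta$ is not the connecting homomorphism but the functorial map on $H^1$ induced by $\phi\otimes\mathrm{id}$ (the connecting map is $H^0(\mathcal{T}_{\PP^2}(2s-2)\otimes\mathcal{I}_Z)\to\mathbb{C}^k$); this does not affect the argument.
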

\begin{proof}
Since $\mathcal{T}_{\PP^2}(m)$ has no cohomology for $m=-2,-4$,  we get the following exact sequence after tensoring \eqref{qk} with $\mathcal{T}_{\PP^2}(2s-2)$ and taking cohomology: 
\[0\to H^0(\PP^2,\mathcal{T}_{\PP^2}(2s-2)\otimes\mathcal{I}_Z)\to H^1(\PP^2,\mathcal{T}_{\PP^2}(-1))^k\xrightarrow{\phi_*}H^1(\PP^2,\mathcal{T}_{\PP^2}(-1))^k.\]

The morphism $\phi_*$ is induced by $\phi$ in \eqref{qk} and is the zero map since \eqref{qk} is minimal. 
Therefore, \[h^0(\PP^2,\mathcal{T}_{\PP^2}(2s-2)\otimes\mathcal{I}_Z)=k\cdot h^1(\PP^2,\mathcal{T}_{\PP^2}(-1))=k.\]
\end{proof}

\begin{rmk}
Observe that Lemma \ref{lem: qk sections} writes the minimal free resolution of a set of points that uniquely determine a section of the tangent bundle $\mathcal{T}_{\PP^2}(2s-2)$. In this setting, this answers the Campillo-Olivares problem about the folliations on $\PP^2$ which are fully determined by a subscheme of its singular locus \cite{CO}. We thank  J. Olivares for pointing that out to us.
\end{rmk}

\begin{lem}\label{InterpolationTan}
Let $n=2s(s+1)$ be a tangential integer.
Then the general bundle with the resolution
\[0\rightarrow \mathcal{O}_{\PP^2}(2s-3)^{ks}\rightarrow \mathcal{O}_{\PP^2}(2s-1)^{k(5s-1)}\rightarrow M\rightarrow 0\]
satisfies interpolation with respect to the general point in $D_{\tiny{Gaeta}}$ for $k$ sufficiently large.
\end{lem}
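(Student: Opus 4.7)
The approach mirrors Lemma \ref{InterTri}: by upper semi-continuity of $h^0$, it suffices to produce a single $\mathcal{I}_Z\in \overline{D_{\tiny{Gaeta}}}$ for which $M\otimes \mathcal{I}_Z$ has no cohomology. First, a direct Riemann--Roch computation from the MFR of $M$ gives $\chi(M)=2ks(s+1)(4s-1)=n\cdot r(M)$, so $\chi(M\otimes \mathcal{I}_Z)=0$. Second, the twists $M(-d)$ for $d\in\{2s,2s+1,2s+2\}$ all satisfy $H^2=0$ for a general $M$ (the only nontrivial case, $d=2s+2$, requires the induced map $H^2(\mathcal{O}_{\PP^2}(-5)^{ks})\to H^2(\mathcal{O}_{\PP^2}(-3)^{k(5s-1)})$ to be surjective, which holds for $k$ sufficiently large); tensoring \eqref{qk} by $M$ and chasing the long exact sequence then yields $H^2(M\otimes \mathcal{I}_Z)=0$.

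The remaining and hard step is the vanishing $H^0(M\otimes \mathcal{I}_Z)=0$. We specialize $Z$ to an ideal with the refined MFR \eqref{qk} for a suitable value of the parameter there (which lies in $\overline{D_{\tiny{Gaeta}}}$ since \eqref{qk} is non-generic). Because $H^0(M(-d))=0$ for $d\geq 2s$ (immediate from the MFR of $M$), tensoring \eqref{qk} by $M$ and chasing the long exact sequence reduces the problem to the injectivity of the induced map on first cohomology,
\begin{equation*}
\phi_*:\; H^1(M(-2s-2))^{s}\oplus H^1(M(-2s-1))^{k} \;\longrightarrow\; H^1(M(-2s-1))^{k}\oplus H^1(M(-2s))^{s+1}.
\end{equation*}
A direct calculation from the MFR of $M$ yields $\dim H^1(M(-2s-2))=k(s+1)$, $\dim H^1(M(-2s-1))=3ks$, and $\dim H^1(M(-2s))=ks$, so both source and target of $\phi_*$ have dimension $ks(s+1+3k)$; injectivity is then equivalent to isomorphism, and to the desired vanishing.

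The main obstacle is establishing this injectivity. Minimality of \eqref{qk} forces the degree-$0$ block of $\phi$ to vanish, so $\phi_*$ is block lower-triangular, but a dimension count shows that its two diagonal blocks cannot both be injective in general: the off-diagonal block must compensate. The plan is to exploit the special structure recorded in Lemma \ref{lem: qk sections}, namely the $k$ linearly independent sections of $\mathcal{T}_{\PP^2}(2s-2)$ vanishing on $Z$, which constrain the entries of $\phi$ and identify the off-diagonal block with a cup-product map whose kernel and image interact with those of the diagonal blocks so as to force $\phi_*$ injective; the hypothesis ``$k$ sufficiently large'' is used to guarantee the requisite genericity of the multiplication maps on the cohomology of $M$. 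Unlike the triangular case, direct restriction of $M$ to a rational curve on which $f^*M$ splits uniformly is unavailable, since the minimal such degree is $4s-1$ and its nodal count $\binom{4s-2}{2}$ exceeds $n$ for $s\geq 2$, so a global cohomological approach is forced.
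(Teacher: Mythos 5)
Your reductions are correct as far as they go: the Euler characteristic count, the vanishing of $h^0(M(-d))$ for $d\geq 2s$ and of the relevant $H^2$'s for general $M$, and the resulting identification $H^0(M\otimes\mathcal{I}_Z)\cong\ker\bigl(\phi_*\colon H^1\to H^1\bigr)$ with matching source and target dimensions are all fine, as is the observation that the restriction-to-a-rational-curve trick of Lemma \ref{InterTri} is unavailable here. But the lemma is not proved: the entire content is the injectivity of $\phi_*$, and at exactly that point you offer only a ``plan'' --- that Lemma \ref{lem: qk sections} should let you view the off-diagonal block as a cup-product map whose kernel and image ``interact'' with the diagonal blocks ``so as to force $\phi_*$ injective,'' with genericity in $k$ supplying the rest. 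Nothing is verified: you do not identify the cup-product map, you do not show the relevant multiplication maps on the cohomology of a general $M$ behave generically, and your own remark that the diagonal blocks cannot both be injective shows that the needed interaction is genuinely nontrivial. As written, this is a restatement of the difficulty, not a solution, so the key vanishing $H^0(M\otimes\mathcal{I}_Z)=0$ remains open.

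For comparison, the paper sidesteps this linear-algebra problem entirely. It specializes to a $Z$ with resolution \eqref{SPECIAL2} (the case of \eqref{qk} with parameter $2$), and for $s\geq 6$ row-and-column reduces $\phi$ so that $\mathcal{I}_Z$ sits in a short exact sequence $0\to V\to\mathcal{T}_{\PP^2}(-2s-1)^2\to\mathcal{I}_Z\to 0$, where $V$ is defined by \eqref{eq: esp res} from a \emph{general} block $A(\phi)$ and has $\Delta(V)=\tfrac{2s^2-10s+5}{9}>1$, hence is a general stable bundle in a positive-dimensional moduli space. Cohomological orthogonality of $M$ with $\mathcal{T}_{\PP^2}(-2s-1)$ is immediate (exceptional bundle), and orthogonality with $V$ follows from Theorem 1.2 of \cite{CHK} on cohomology of general tensor products, after a case analysis on $s\bmod 3$ to locate the controlling exceptional bundle; the small cases $2\leq s\leq 5$, where this structural argument fails (e.g.\ $\Delta(V)$ is too small and the reduction needs $s\geq 6$), are checked by Macaulay2. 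If you want to salvage your approach, you would either have to prove the injectivity of $\phi_*$ directly for one explicit pair $(M,Z)$ (essentially redoing the computer verification) or import an input of the strength of \cite{CHK}; your uniform treatment of all $s$ also glosses over the fact that some such input is what the hypothesis ``$k$ sufficiently large'' cannot by itself provide.
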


\begin{proof}
Note $D_{\tiny{Gaeta}}$ is an irreducible divisor that contains all sheaves which do not fit into a Gaeta triangle (\ref{res3}) by Corollary \ref{cor: DGaeta}. 
It suffices to provide a single point in $D_{\tiny{Gaeta}}$ whose ideal sheaf satisfies interpolation for the bundle $M$. We do this next.

\medskip\noindent
For $1<s<6$, interpolation can be proven using Macaulay2
and the appendix A contains code in order to do so.
For $s\geq 6$, there are ideal sheaves with the resolution 
\begin{equation}\label{SPECIAL2}
0 \xrightarrow{} \mathcal{O}_{\PP^2}(-2s-2)^s \oplus \mathcal{O}_{\PP^2}(-2s-1)^2 \xrightarrow{\phi} \mathcal{O}_{\PP^2}(-2s-1)^2 \oplus \mathcal{O}_{\PP^2}(-2s)^{s+1} \xrightarrow{} \mathcal{I}_Z \xrightarrow{} 0.    
\end{equation}
These are contained in $D_{\tiny{Gaeta}}$ as they do not have Gaeta's resolution.
Since $s\geq 6$, there exist row and column reductions which reduce the matrix for $\phi$ to the form
\[\phi=\left(\begin{array}{c|c}
0    & A(\phi) \\ \hline
B(\phi) & C(\phi)
\end{array}\right),\]
where $B(\phi)$ is the standard matrix for the resolution of $\mathcal{T}_{\mathbb{P}^2}(-2s-1)^2$.
Since we only need a single sheaf which satisfies interpolation, we may assume that $A(\phi)$ is general and therefore defines a vector bundle $V$ by

\begin{align}\label{eq: esp res}0 \xrightarrow{} V \xrightarrow{} \mathcal{O}_{\PP^2}(-2s-2)^s \xrightarrow{A(\phi)} \mathcal{O}_{\PP^2}(-2s-1)^2 \oplus \mathcal{O}_{\PP^2}(-2s)^{s-5} \xrightarrow{} 0.\end{align}
In other words, the minimal free resolution of $\mathcal{I}_Z$, when considered in the derived category, is equivalent to the triangle 
\[\mathcal{T}_{\mathbb{P}^2}(-2s-1)^{2\bullet} \to \mathcal{I}_Z^{\bullet} \to V[1]^{\bullet}\to \cdot \] whose cohomology yields the short exact sequence of sheaves 
\[0\to V\to \mathcal{T}_{\mathbb{P}^2}(-2s-1)^2 \to \mathcal{I}_Z \to 0.\]

\medskip\noindent
It now suffices to show that $M$ is cohomologically orthogonal to $\mathcal{T}_{\mathbb{P}^2}(-2s-1)^2$ as well as to a general $V$.

\medskip\noindent
Since $\chi(M\otimes \mathcal{T}_{\mathbb{P}^2}(-2s-1)) = 0$, then $\mathcal{T}_{\mathbb{P}^2}(-2s-1)$ satisfies interpolation with respect to a general $M$ as it is an exceptional bundle \cite{CHK}.  
This also can be seen directly from the resolution of $M$ since $\mathcal{T}_{\mathbb{P}^2}(-2)$ and $\mathcal{T}_{\mathbb{P}^2}(-4)$ have no cohomology.

\medskip\noindent
To show interpolation for $V$, we may apply Theorem 1.2 in \cite{CHK}. 
In order to do that, we need to argue that $V$ is a general stable bundle in a positive dimensional moduli space. 
Since $A(\phi)$ is general and is the dual of a Gaeta resolution map, it suffices to show that the discriminant $\Delta(V)$ is at least one as that is above the Dr\'ezet-Le Potier curve (\cite{DL}).
Using the short exact sequence defining $V$ we compute $\Delta(V) = \frac{2s^2-10s+5}{9}$.
For $s\geq 6$, this discriminant is greater than 1 as desired.
Thus, $V$ and $M$ satisfy the hypotheses of Theorem 1.2 in \cite{CHK}.

\medskip\noindent
In order to see which case of Theorem 1.2 in \cite{CHK} we fall into, we consider the primary controlling exceptional bundle to $V$.
Recall that the primary controlling exceptional bundle associated to $V$ is the exceptional bundle for which the solution to the equations \[\chi(V \otimes F)=\chi(V^*, F) = 0 \quad \text{ and }\quad  \Delta(F) = \frac{1}{2}\] lies between its left and right endpoint Chern characters.
Since $\mathrm{rk}(V) = 3$ and $\mu(V)  = \frac{2-8s}{3}$, solving these equations for the slope of $F$ gives
\[\mu(F) = \frac{1}{6} \left(\sqrt{16 s^2 - 80 s + 85} + 16 s - 13\right).\]
If $s$ is congruent to $0 \mod 3$, the left and right endpoints of the interval controlled by $\mathcal{O}_{\PP^2}(\frac{10}{3}s-4)$ are $\frac{1}{6}(-33 + 3 \sqrt{5} + 20 s)$ and $\frac{1}{6} (20 s - 3 (5 + \sqrt{5}))$, respectively.
The slope $\mu(F)$ is between those endpoints if and only if $s \geq 4 = \left\lceil \frac{15 \sqrt{5} - 45}{3 \sqrt{5} - 10}\right\rceil$.
If $s$ is congruent to $1 \mod 3$ ($2 \mod 3$), a similar process shows that the primary controlling exceptional bundle is $E_{\frac{1}{2}}(\frac{10s-13}{3})$ (or $\mathcal{O}_{\PP^2}(\frac{10s-11}{3})$).  
As a result, we have to address each of these cases separately.

\medskip\noindent
First, if $s$ is congruent to $0$ mod $3$, then we have $\chi\left(V\left(\frac{10}{3}s-4\right)\right) = \frac{9 - s}{3}<0$.
By Part 1 of Theorem 1.2 in \cite{CHK}, the cohomology of $M\otimes V$ is determined by $\chi(M\otimes V) = 0$.

\medskip \noindent 
Next, if $s$ is congruent to $1$ mod $3$, then we have $\chi\left(V\otimes E_{\frac{1}{2}}\left(\frac{10s-13}{3}\right)\right) = 2 > 0$.
Since \[\mathrm{rank}(V) - \chi\left(V \otimes E_{\frac{1}{2}}\left(\frac{10s-13}{3}\right)\right) \mathrm{rank}\left(E_{\frac{1}{2}}\left(\frac{10s-13}{3}\right)^*\right)= 3-2\cdot2 = -1<0,\] the cohomology of $M\otimes V$ is determined by $\chi(M\otimes V) = 0$, by Part 2 of Theorem 1.2 in \cite{CHK}.

\medskip\noindent
Finally, if $s$ is congruent to $2$ mod $3$, then we have $\chi\left(V\left(\frac{10s-11}{3}\right)\right) = \frac{s+4}{3}>0$.
Since \[\mathrm{rank}(V) - \chi\left(V\left(\frac{10s-11}{3}\right)\right)\mathrm{rank}\left(\mathcal{O}_{\PP^2}\left(\frac{11-10s}{3}\right)\right)= 3-1\cdot(s+4) = -1-s<0\] 
the cohomology of $M\otimes V$ is determined by $\chi(M\otimes V) = 0$, by Part 2 of Theorem 1.2 in \cite{CHK}.

\medskip\noindent
In every case, the bundle $M$ satisfies interpolation with respect to $V$ and hence for a general $\mathcal{I}_Z$ in $D_{\tiny{Gaeta}}$.
\end{proof}


\medskip\noindent
In order to compute the edge of the movable cone we need the following technical lemma.

\begin{lem}\label{lem: resolution zeroes tangent}
    Let $\gamma\in H^0(\PP^2,\mathcal{T}_{\PP^2}(2s-2))$ be a general section and let $\Gamma$ be the zero locus of $\gamma$. Then the ideal sheaf $\mathcal{I}_{\Gamma}$ has minimal free resolution
    \begin{equation}\label{res zero section}
        0\to\OO_{\PP^2}(-(4s-1))\oplus\OO_{\PP^2}(-(2s+1))\to\OO_{\PP^2}(-2s)^3\to\mathcal{I}_\Gamma\to0.
    \end{equation}
\end{lem}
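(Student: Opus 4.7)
The strategy is to realize $\mathcal{I}_\Gamma$ as the ideal of $2\times 2$ minors of an explicit $3\times 2$ matrix and invoke the Hilbert--Burch theorem. First, since $H^1(\OO_{\PP^2}(2s-2))=0$, the twisted Euler sequence
\[0\to\OO_{\PP^2}(2s-2)\to\OO_{\PP^2}(2s-1)^3\to\mathcal{T}_{\PP^2}(2s-2)\to 0\]
is surjective on $H^0$, so I can lift $\gamma$ to a triple $(f_0,f_1,f_2)\in H^0(\OO_{\PP^2}(2s-1))^3$. I would then form the $3\times 2$ matrix
\[\phi=\begin{pmatrix} f_0 & x_0 \\ f_1 & x_1 \\ f_2 & x_2 \end{pmatrix}\colon \OO_{\PP^2}(-(4s-1))\oplus \OO_{\PP^2}(-(2s+1))\longrightarrow \OO_{\PP^2}(-2s)^3,\]
whose three signed $2\times 2$ minors are $F_1=x_1f_0-x_0f_1$, $F_2=x_2f_0-x_0f_2$, $F_3=x_2f_1-x_1f_2$, each homogeneous of degree $2s$.

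Next, I would identify the ideal $(F_1,F_2,F_3)$ with $\mathcal{I}_\Gamma$. The set-theoretic equality is immediate: a point $p$ lies in $\Gamma$ iff $(f_0(p),f_1(p),f_2(p))$ is proportional to $(x_0(p),x_1(p),x_2(p))$, i.e.\ iff $\phi|_p$ has rank $\le 1$, iff all $F_k$ vanish at $p$. For the scheme-theoretic refinement I would use the Koszul complex for the regular section $\gamma$ of the rank-$2$ bundle $\mathcal{T}_{\PP^2}(2s-2)$, namely
\[0\to \OO_{\PP^2}(1-4s)\longrightarrow \Omega^1_{\PP^2}(2-2s)\longrightarrow \mathcal{I}_\Gamma\to 0,\]
using $\wedge^2\mathcal{T}_{\PP^2}(2s-2)=\OO_{\PP^2}(4s-1)$ and $\mathcal{T}_{\PP^2}(2s-2)^\vee=\Omega^1(2-2s)$. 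Twisting by $\OO(2s)$ and using $H^0(\OO(1-2s))=H^1(\OO(1-2s))=0$ together with $h^0(\Omega^1(2))=3$ gives $h^0(\mathcal{I}_\Gamma(2s))=3$. Running the standard basis of $H^0(\Omega^1(2))=\{(\ell_0,\ell_1,\ell_2):\sum x_i\ell_i=0\}$ through the map $\gamma^\vee\otimes\OO(2s)$ produces exactly $F_1,F_2,F_3$. A Hilbert-polynomial (length) comparison, in which both $\OO/(F_1,F_2,F_3)$ and $\OO_\Gamma$ have length $c_2(\mathcal{T}_{\PP^2}(2s-2))=4s^2-2s+1=|\Gamma|$, then upgrades the inclusion $(F_1,F_2,F_3)\subseteq \mathcal{I}_\Gamma$ to equality of ideal sheaves.

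With $\mathcal{I}_\Gamma=(F_1,F_2,F_3)$ established, the Hilbert--Burch theorem applies: the ideal of maximal minors of $\phi$ has codimension $2$, which is the maximum possible for a $3\times 2$ matrix, so the Eagon--Northcott/Hilbert--Burch complex
\[0\to \OO_{\PP^2}(-(4s-1))\oplus \OO_{\PP^2}(-(2s+1))\xrightarrow{\phi} \OO_{\PP^2}(-2s)^3\longrightarrow \mathcal{I}_\Gamma\to 0\]
is exact. Minimality is automatic since every entry of $\phi$ is a homogeneous polynomial of positive degree. The main obstacle is the scheme-theoretic upgrade in the second step: set-theoretic equality $V(F_1,F_2,F_3)=\Gamma$ falls out of the rank condition on $\phi$, but the genuine equality of ideal sheaves requires either the explicit Koszul identification of a basis of $H^0(\Omega^1(2))$ with the minors, or the length comparison via Hilbert polynomials; once this is in place, Hilbert--Burch closes the argument essentially formally.
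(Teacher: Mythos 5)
Your proposal is correct, but it takes a genuinely different route from the paper. The paper never writes down the resolution explicitly: it observes that any scheme with Betti table \eqref{res zero section} satisfies $h^0(\PP^2,\mathcal{I}_\Gamma\otimes\mathcal{T}_{\PP^2}(2s-2))=1$, so the syzygy locus is contained in the locus of zero schemes of sections of $\mathcal{T}_{\PP^2}(2s-2)$, and then shows these two irreducible loci have the same dimension $2n-1$ --- the locus of zero schemes via a generically finite dominant map from $D_{\mathrm{Gaeta}}$, the syzygy locus via the dimension formula of \cite{BS} as explained in \cite{CM} --- so they coincide and the general zero scheme has the stated resolution. You instead lift $\gamma$ through the twisted Euler sequence, assemble the $3\times 2$ matrix $\phi$, identify $\mathcal{I}_\Gamma$ with the ideal of $2\times2$ minors via the Koszul complex of $\gamma$, and close with Hilbert--Burch, minimality being clear since every entry of $\phi$ has positive degree. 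Your argument is self-contained (no appeal to the syzygy-locus dimension formula nor to the identification of $D_{\mathrm{Gaeta}}$) and proves slightly more, since it works for every section whose zero scheme is zero-dimensional rather than only the general one; the paper's count, in turn, avoids all syzygy computations and reuses the geometry of $D_{\mathrm{Gaeta}}$ that is needed anyway in the proof of Theorem \ref{MOV}. Two small points would tighten your write-up: say explicitly that generality (global generation of $\mathcal{T}_{\PP^2}(2s-2)$) is what gives $\Gamma$ codimension $2$, which is needed both for exactness of the Koszul complex and for the minor ideal to have height $2$; and note that the length of $\OO_{\PP^2}/(F_1,F_2,F_3)$ is itself computed from the Hilbert--Burch complex, so the clean logical order is: codimension $2$ implies exactness of the complex, which gives length $4s^2-2s+1$, which combined with the inclusion $(F_1,F_2,F_3)\subseteq\mathcal{I}_\Gamma$ gives equality. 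Alternatively, since $\Omega^1_{\PP^2}(2)$ is globally generated, the Koszul surjection $\Omega^1_{\PP^2}(2-2s)\to\mathcal{I}_\Gamma$ already shows that $F_1,F_2,F_3$ generate $\mathcal{I}_\Gamma(2s)$, making the length comparison unnecessary.
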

\begin{proof}
    Observe that if $\Gamma$ is any set of points with the above resolution, then $h^0(\PP^2,\mathcal{I}_\Gamma\otimes \mathcal{T}_{\PP^2}(2s-2))=1$. Therefore, the locus of ideals $I_Z\in \PP^{2[n]}$ with the minimal free resolution above is contained in the locus of subschemes that are the zeroes of a section of $\mathcal{T}_{\PP^2}(2s-2)$. The lemma follows if we show that these two families have the same dimension since they are both irreducible. This is what we do next.

    \medskip\noindent
    Let $\Gamma$ be the zero locus of a general section $\gamma$ and let $Z\subset\Gamma$ be a subscheme of length $n = 2s(s+1)$. Then $\gamma\in H^0(\PP^2,\mathcal{I}_Z\otimes \mathcal{T}_{\PP^2}(2s-2))\neq0$, so $Z\in D_{\tiny{Gaeta}}$. 
    Conversely, given a general element $Z\in D_{\tiny{Gaeta}}$ there is exactly one section (up to scalar multiple) in $H^0(\PP^2,\mathcal{I}_Z\otimes \mathcal{T}_{\PP^2}(2s-2))$ which defines a scheme $\Gamma\supset Z$ as its zero locus.
    This defines a rational, generically finite, and dominant map from $D_{\tiny{Gaeta}}$ to the locus of schemes which are zeroes of a section of $\mathcal{T}_{\PP^2}(2s-2)$, which therefore has dimension equal to $\dim D_{\tiny{Gaeta}}=2n-1$.
    
    \medskip\noindent
    On the other hand, by \cite{BS} as explained in \cite{CM}, the dimension of the schemes with minimal free resolution (\ref{res zero section}) can be computed as \[3\cdot\binom{4s-1-2s+2}{2}+3\cdot\binom{2s+1-2s+2}{2}-2-\binom{4s-1-(2s+1)+2}{2}-9+1=2n-1.\]
\end{proof}


\medskip\noindent
We now compute the primary extremal divisor of the movable cone $\mathrm{Mov}\left(\mathbb{P}^{2[n]}\right)$, when $n$ is a tangential number.
\begin{theorem}\label{MOV} Let $n=2s(s+1)$, with $s\geq2$.
The (primary) extremal ray of the movable cone $\MOV(\PP^{2[n]})$ is spanned by the divisor class  \[D_{mov} = \tfrac{8 s^2 - 4 s + 1}{4 s - 1}H-\tfrac{1}{2}B.\]
\end{theorem}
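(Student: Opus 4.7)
First, I would compute the class of the Brill-Noether divisor $D_M$ associated to the bundle $M$ from Lemma \ref{InterpolationTan}. From the minimal free resolution
\[0\to\mathcal{O}_{\PP^2}(2s-3)^{ks}\to\mathcal{O}_{\PP^2}(2s-1)^{k(5s-1)}\to M\to 0,\]
we read off $\mathrm{rk}(M)=k(4s-1)$ and $c_1(M)=k(8s^2-4s+1)$, so $\mu(M)=\tfrac{8s^2-4s+1}{4s-1}$. The induced class $[D_M]=\mu(M)H-\tfrac{1}{2}B$ then coincides with the claimed $D_{\mathrm{mov}}$, matching the class computation carried out in Theorem \ref{TRI}.

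Next, I would argue that $[D_M]\in\MOV(\PP^{2[n]})$ exactly as in Theorem \ref{TRI}. By Lemma \ref{InterpolationTan}, the bundle $M$ satisfies interpolation with respect to the general point of $D_{\tiny{Gaeta}}$, so $D_{\tiny{Gaeta}}$ is not contained in the stable base locus of $D_M$. Since Corollary \ref{cor: DGaeta} together with Remark \ref{DivGaeta} identify $D_{\tiny{Gaeta}}$ as the unique irreducible effective divisor spanning the primary extremal ray of $\EFF(\PP^{2[n]})$ that lies outside the movable cone, this forces $[D_M]$ into $\MOV(\PP^{2[n]})$.

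Third, to obtain extremality in $\MOV(\PP^{2[n]})$ I would exhibit a moving curve $C\subset D_{\tiny{Gaeta}}$ dual to $D_M$, in direct analogy with the triangular case. The natural candidate is a plane curve $C_0$ of degree $4s-1$ containing the length $L=4s^2-2s+1$ subscheme $\Gamma$ from Lemma \ref{lem: resolution zeroes tangent}, constrained to carry $L-n=2s^2-4s+1$ nodes so that its normalization $\widetilde{C_0}$ has geometric genus $g_{\mathrm{geom}}=\binom{4s-2}{2}-(L-n)=6s^2-6s+2$. A base-point-free $g^1_n$ on $\widetilde{C_0}$ then pushes forward to a rational curve $C$ in $\PP^{2[n]}$ satisfying $C\cdot H=4s-1$ and, by Hurwitz, $C\cdot B=2(n+g_{\mathrm{geom}}-1)=16s^2-8s+2$, from which one verifies directly that $C\cdot D_M=0$. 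Since varying $\Gamma$ and the $g^1_n$ sweeps out a dense subset of $D_{\tiny{Gaeta}}$, the class $[D_M]$ is extremal in $\MOV(\PP^{2[n]})$.

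The main obstacle is establishing the existence of the nodal curve $C_0$ with the prescribed $L-n$ nodes together with a suitable $g^1_n$ on $\widetilde{C_0}$. For $2\leq s\leq 4$ the Brill-Noether number $2n-g_{\mathrm{geom}}-2$ is non-negative, so the $g^1_n$ exists on a general curve of genus $g_{\mathrm{geom}}$ and the construction reduces to exhibiting $C_0$ in the Severi variety of nodal plane curves of that degree and $\delta$-invariant. For $s\geq 5$ the Brill-Noether number becomes negative and one must instead exploit the special geometry of plane nodal curves: projection from a node of $C_0$ supplies a $g^1_{4s-3}$ that can be augmented with an appropriate fixed locus, provided one tracks the resulting Hurwitz contribution so that the intersection number $C\cdot B$ is preserved. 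As an alternative one may bypass the plane-curve construction and work directly with a pencil of sections of $\mathcal{T}_{\PP^2}(2s-2)$ via the relative Hilbert scheme $\mathrm{Hilb}^n(\mathcal{V}/\PP^1)$ of the incidence curve $\mathcal{V}\subset\PP^2\times\PP^1$, whose intersection data follows from Chern class computations for $p_1^*\mathcal{T}_{\PP^2}(2s-2)\otimes p_2^*\mathcal{O}_{\PP^1}(1)$.
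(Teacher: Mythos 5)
Your class computation for $D_M$ and the movability step are fine and agree with the paper: the slope $\mu(M)=\tfrac{8s^2-4s+1}{4s-1}$ follows from the resolution in Lemma \ref{InterpolationTan}, and interpolation for the general point of $D_{\mathrm{Gaeta}}$ keeps that divisor out of the stable base locus of $D_M$, which is exactly how the paper argues movability. The genuine gap is in the extremality step. You leave the $2s^2-4s+1$ nodes of the auxiliary degree $4s-1$ curve unconstrained and then try to produce the dual curve from a base-point-free $g^1_n$ on the normalization. As you note yourself, for $s\geq 5$ the Brill--Noether number $2n-g-2=-2s^2+10s-4$ is negative, so such a pencil does not exist on a general curve of that genus, and your fallback (projection from a node plus a fixed divisor of degree $n-(4s-3)$) is not carried out: you neither verify that the resulting members lie in, and dominate, $D_{\mathrm{Gaeta}}$, nor that the construction can be made through a prescribed general point of $D_{\mathrm{Gaeta}}$. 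Even for $2\leq s\leq 4$, the normalization of a nodal plane curve need not be Brill--Noether general, and, more importantly, a general $g^1_n$ on it has no a priori relation to the given scheme $Z$; the assertion that ``varying $\Gamma$ and the $g^1_n$ sweeps out a dense subset of $D_{\mathrm{Gaeta}}$'' is exactly what needs proof, since extremality requires a curve of the dual class through the general point of $D_{\mathrm{Gaeta}}$.

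The paper's proof avoids all of this by making the pencil canonical: starting from a general $Z\in D_{\mathrm{Gaeta}}$, it writes $\Gamma=Z\cup\Gamma_{\mathrm{res}}$ and chooses the degree $4s-1$ curve $C$ to pass through $Z$ and to be nodal precisely at $\Gamma_{\mathrm{res}}$ (a dimension count shows such irreducible curves exist). Then by Serre duality and the adjoint conditions, $h^0(C,\omega_C\otimes\mathcal{O}_C(-Z))=h^0(\PP^2,\mathcal{I}_{\Gamma}(4s-4))$, and this is where Lemma \ref{lem: resolution zeroes tangent} is really used: the explicit resolution of $\mathcal{I}_\Gamma$ gives $h^0(\PP^2,\mathcal{I}_{\Gamma}(4s-4))=4s^2-8s+3$, whence $h^0(C,\mathcal{O}_C(Z))=2$ by Riemann--Roch. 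Thus the given $Z$ itself moves in a pencil on $C$, uniformly for all $s\geq 2$ (no Brill--Noether generality is invoked), the induced curve $\beta$ satisfies $\beta\cdot H=4s-1$, $\beta\cdot B/2=8s^2-4s+1$, and the sweeping of $D_{\mathrm{Gaeta}}$ is automatic because the construction starts at its general point. To repair your argument you would need to restore exactly this mechanism: place the nodes at $\Gamma_{\mathrm{res}}$ and use the section count from Lemma \ref{lem: resolution zeroes tangent}, rather than appeal to existence of a general $g^1_n$.
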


\begin{proof} Let us write the minimal free resolution of  a general $Z\in D_{\tiny{Gaeta}}$ in the derived category as the following triangle
\begin{equation}\label{QE}
    \begin{aligned}
   W_2\to \mathcal{T}_{\mathbb{P}^2}(-2s-1)\to\mathcal{I}_Z\to   \cdot
    \end{aligned}
\end{equation}
where $W_2$ is considered as a complex of rank $1$,
$$0\to W_2\to\OOT(-2s-2)^s \to \OOT(-2s-1)\oplus\OOT(-2s)^{s-2}\to 0.$$

\medskip\noindent
The log character $\zeta$ that satisfies the following equations
\begin{equation*}
    \begin{aligned}
 \chi(\zeta\otimes \mathcal{T}_{\mathbb{P}^2}(-2s-1))&=0,\\       
 \chi(\zeta\otimes W_2)&=0,
    \end{aligned}
\end{equation*}
is that of a bundle in Lemma \ref{InterpolationTan}. 

\medskip\noindent
By Lemma \ref{InterpolationTan}, the general bundle with resolution \[0\rightarrow \mathcal{O}_{\PP^2}(2s-3)^{ks}\rightarrow \OOT(2s-1)^{k(5s-1)}\rightarrow M \rightarrow 0\] satisfies interpolation with respect to the general point in $D_{\tiny{Gaeta}}$. 
The Brill-Noether divisor induced by the bundle $M$, whose class is $D_{M}=\mu_M H-B/2$, does not contain $D_{\tiny{Gaeta}}$ in its base locus \cite{CHW}, and so is movable.

\medskip\noindent
In order to finish the proof, we need to show that $D_M$ is extremal in $\Mov\left(\mathbb{P}^{2[n]}\right)$. 
We do this next by exhibiting $D_M$ as the dual class of a curve that sweeps out an open set of $D_{\tiny{Gaeta}}$. Consider a general element $Z\in D_{\tiny{Gaeta}}$ as part of the vanishing locus of a section $ H^0(\PP^2, \mathcal{T}_{\PP^2}(2s-2))$, which we denote by $\Gamma=Z\cup\Gamma_{\tiny{res}}$.
Note that $\Gamma_{\tiny{res}}$ has length
\[\delta:=c_2(\mathcal{T}_{\PP^2}(2s-2))-n=2s^2-4s+1.\]
The space of degree $4s-1$ curves which pass through $Z$ and are nodal at $\Gamma_{res}$ has dimension
\[\binom{4s-1+2}{2}-1-n-3\delta=12s-4.\]
Therefore, we may consider an irreducible curve $C$ of degree $4s-1$ which passes through $Z$ and has simple nodes at $\Gamma_{res}$ as its only singularities. Using the Riemann-Roch theorem, we get
\begin{align*}
    h^0(C,\mathcal{O}_C(Z))&=h^0(C,\omega_C \otimes \mathcal{O}_C(-Z))+n+1-g(C)\\
    &=h^0(\PP^2,\mathcal{I}_{\Gamma}(4s-4))+n+1-\binom{4s-2}{2}+\delta\\
    &=h^0(\PP^2,\mathcal{I}_{\Gamma}(4s-4))-(4s^2-8s+1).
\end{align*}
We know that $\Gamma$ has minimal free resolution as stated in Lemma \ref{lem: resolution zeroes tangent} which, in particular, implies that
\[h^0(\PP^2,\mathcal{I}_{\Gamma}(4s-4))=4s^2-8s+3.\]
This proves that $h^0(C,\mathcal{O}_C(Z))=2$ and hence $Z$ moves in a pencil on $C$; which can be proved to be base point free in a similar fashion. This pencil induces a curve $\beta\subset\mathbb{P}^{2[n]}$ which contains $Z$ and satisfies
\begin{align*}
    \beta\cdot H &= 4s-1,\\
    \beta\cdot B/2&=8s^2-4s+1.
\end{align*}
Therefore, $D_{mov}\cdot\beta = 0$, and $D_{mov}$ is extremal in $\MOV(\PP^{2[n]})$.
\end{proof}

\begin{coro}
The bundle $M$ of Lemma \ref{InterpolationTan} solves the interpolation problem for the generic point in $D_{\tiny{Gaeta}}\subset \PP^{2[n]}$, when $n=2s(2+1), s>1$.
\end{coro}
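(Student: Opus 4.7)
\medskip\noindent
\textbf{Proof plan.} The interpolation problem for the generic $\Gamma \in D_{\tiny{Gaeta}}$ asks for the infimum of slopes $\mu^{+}$ such that (a) $\mu(\Gamma) + \mu^{+} \geq 0$ and (b) some vector bundle of slope $\mu^{+}$ is cohomologically orthogonal to $\Gamma$. Since $\mu(\Gamma) = 0$ and $\mu(M) = \tfrac{8s^{2}-4s+1}{4s-1} > 0$, condition (a) holds automatically, while condition (b) is precisely the content of Lemma \ref{InterpolationTan}. Thus only the minimality of $\mu(M)$ remains to be established.

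\medskip\noindent
My plan for the minimality is to argue by contradiction, using the extremality statement in Theorem \ref{MOV}. Suppose for contradiction that there exists a vector bundle $V$ with $\mu(V) < \mu(M)$ that is cohomologically orthogonal to the generic $\Gamma \in D_{\tiny{Gaeta}}$. Since $V \otimes \Gamma$ has no cohomology and the Euler characteristic is locally constant on $\PP^{2[n]}$, one obtains $\chi(V \otimes I_Z) = 0$ for every $I_Z$. By semicontinuity, the Brill-Noether locus
\[
D_{V} \;=\; \{\, I_Z \in \PP^{2[n]} \;:\; h^{1}(V \otimes I_Z) \neq 0 \,\}
\]
is either all of $\PP^{2[n]}$ or a proper effective divisor; it cannot be all of $\PP^{2[n]}$ because there is a point of $D_{\tiny{Gaeta}}$ avoiding it. Hence $D_{V}$ is an effective divisor with class a positive multiple of $\mu(V) H - \tfrac{1}{2} B$. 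Moreover, since a generic $\Gamma \in D_{\tiny{Gaeta}}$ does not lie in $D_{V}$, the divisor $D_{\tiny{Gaeta}}$ is not contained in the stable base locus of $D_{V}$, so $D_{V}$ is movable. But the class $\mu(V) H - \tfrac{1}{2} B$ with $\mu(V) < \mu(M)$ lies strictly outside the cone with extremal ray spanned by $D_{M} = \tfrac{8s^{2}-4s+1}{4s-1} H - \tfrac{1}{2} B$, contradicting the extremality statement in Theorem \ref{MOV}.

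\medskip\noindent
The delicate point in this plan is the effectivity and codimension-one-ness of the hypothetical Brill-Noether locus $D_{V}$. One must ensure that $V$ can be taken sufficiently general in its moduli space so that $D_{V}$ is the degeneracy locus of a morphism of vector bundles of the expected codimension. Granted this and the dual curve sweep from the proof of Theorem \ref{MOV}, the extremality of the ray through $D_{M}$ in $\MOV(\PP^{2[n]})$ directly translates into minimality of $\mu(M)$ among slopes admitting an orthogonal bundle, concluding that $M$ solves the interpolation problem.
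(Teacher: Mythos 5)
Your reduction — condition (a) is automatic, cohomological orthogonality at slope $\mu(M)$ is exactly Lemma \ref{InterpolationTan}, and only minimality of the slope remains, to be extracted from Theorem \ref{MOV} — is precisely how the paper intends this corollary to be read (it is stated without a separate proof, as an immediate consequence of that lemma and of the extremality argument inside the proof of Theorem \ref{MOV}). The genuine gap is the step ``since a generic $\Gamma\in D_{\tiny{Gaeta}}$ does not lie in $D_V$, the divisor $D_{\tiny{Gaeta}}$ is not contained in the stable base locus of $D_V$, so $D_V$ is movable.'' Movability requires that the stable base locus of $D_V$ contain \emph{no} divisorial component whatsoever; ruling out the single divisor $D_{\tiny{Gaeta}}$ does not give this, so the contradiction with the extremality of $D_M$ in $\MOV(\PP^{2[n]})$ is not established as written.

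Both available repairs stay inside the paper. Most directly, intersect with the pencil curve $\beta$ constructed in the proof of Theorem \ref{MOV}: one has $\beta\cdot\bigl(\mu(V)H-\tfrac{1}{2}B\bigr)=(4s-1)\mu(V)-(8s^2-4s+1)<0$ whenever $\mu(V)<\mu(M)$, and curves of class $\beta$ sweep out an open subset of $D_{\tiny{Gaeta}}$; an irreducible curve meeting an effective divisor negatively must lie in its support, so $D_{\tiny{Gaeta}}\subseteq \mathrm{Supp}(D_V)$, contradicting that the generic $\Gamma\in D_{\tiny{Gaeta}}$ is orthogonal to $V$. Alternatively, invoke Corollary \ref{cor: EffRigid}(b): an effective class $\mu(V)H-\tfrac{1}{2}B$ with $0\le\mu(V)<\mu(M)$ lies in the primary extremal chamber, whose stable base locus is $D_{\tiny{Gaeta}}$, and since the stable base locus is contained in $\mathrm{Supp}(D_V)$ this gives the same contradiction. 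Separately, your ``delicate point'' is aimed at the wrong issue: generality of $V$ is not what makes $D_V$ divisorial. Since $\chi(V\otimes I_Z)=0$, the locus $\{h^1(V\otimes I_Z)\neq 0\}$ is cut out by the determinant of a map of vector bundles of equal rank (over an open set with complement of codimension at least two), hence is either empty or of pure codimension one with class proportional to $\mu(V)H-\tfrac{1}{2}B$; the case that actually needs excluding is emptiness, which follows because a nowhere-vanishing determinant section would force that (nonzero) line bundle class to be trivial. With the movability claim replaced by the $\beta$-curve (or chamber) argument and the emptiness case handled, your proof closes and coincides in substance with the paper's intended deduction.
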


\medskip\noindent
\begin{rmk}\label{eg12} As an example of the distinctive geometry of the families of points in the previous proof, let us describe the dual curve to $D_{mov}\subset \PP^{2[12]}$; denoted above by $\beta$.  If $Z\in \PP^{2[12]}$ is general, then the ideal sheaf $\mathcal{I}_Z$ has Gaeta's minimal free resolution. Moreover, there exist irreducible curves $C\subset \PP^2$ of degree 7 with 1 node as their only singularity that contain $Z$. Consequently, $Z\in C$ can be thought of as an effective divisor of degree 12 on $C$ and as such satisfies that $h^0(\mathcal{O}_C(Z))=1$. However, if $\Gamma \in \PP^{2[12]}$ is a generic point of the extremal divisor $D_{\tiny{Gaeta}}\subset \PP^{2[12]}$, then its ideal sheaf has the following minimal free resolution 
$$0\rightarrow \mathcal{O}_{\PP^2}(-6)^2\oplus \mathcal{O}_{\PP^2}(-5)\overset{M}{\longrightarrow} \mathcal{O}_{\PP^2}(-5)\oplus \mathcal{O}_{\PP^2}(-4)^3\rightarrow \mathcal{I}_{\Gamma}\rightarrow 0,$$
with $M$ a general map.

\medskip\noindent
The $3$ quartics among the generators of the ideal $I_{\Gamma}$ determine 13 points: $\Gamma\cup \{q\}$. In this case, an irreducible curve $C_0$ of degree $7$ with a node as its unique singularity at $q$ satisfies \[h^0(\mathcal{O}_{C_0}(\Gamma))=2.\] 
Therefore, $\Gamma$ can vary in a pencil on $C_0$ which induces a curve $\beta\subset \PP^{2[12]}$ dual to the extremal movable class  $D_{mov}$ in $\MOV(\PP^{2[12]})$. 
\end{rmk}

\medskip\noindent
In this section, in order to exhibit the edge of the movable cone, we had to address the interpolation problem for a Gaeta-admissible object as well as for a general point on the extremal effective divisor. In order to further compute the stable base locus decomposition of $\EFF(\PP^{2[n]})$ one would need to solve the interpolation problem for several loci on the Hilbert scheme. In fact, in the tangential case, the stable base loci of $D_{\tiny{Mov}}$, the extremal movable divisor class contains the configuration of $n-1$ points on a curve of degree $2s-1$. 
Indeed, the curve induced by a general pencil of such configuration intersects the class of $D_{\tiny{Gaeta}}$ negatively in $\mathbb{P}^{2[n]}$. 
Observe that this configuration of points corresponds precisely to sheaves $\mathcal{I}_Z$ which do not admit an exact triangle as in Theorem \ref{MOV}
\begin{equation*}
    \begin{aligned}
   W_2\to \mathcal{T}_{\mathbb{P}^2}(-2s-1)\to\mathcal{I}_Z\to   \cdot
    \end{aligned}
\end{equation*}
Therefore, we may keep running the interpolation program described in the introduction by finding a bundle that satisfies interpolation with respect to sheaves in the locus of $n-1$ points on a curve of degree $2s-1$. We do this in Section \ref{section5} in the case of $\PP^{2[12]}$.


\medskip\noindent
\begin{rmk}
For the cases we did not study, where $n$ is not a triangular or tangential number and which are not covered by \cite{CHW}, we conjecture an exact relationship between the largest component of the stable base locus of the movable cone $\MOV({\PP^{2[n]}})$ and the the minimal free resolutions of ideal sheaves of points in terms of Bridgeland stability conditions. Observe that the relevant cases are those for which the controlling exceptional bundle $E_{\alpha\cdot\beta} = E_{\mu}^*(d)$, where $0\leq \mu <1$, is orthogonal to the general ideal sheaf, i.e. $\chi(E_{\mu}^*(d)\otimes \mathcal{I}_n) =0$.
\begin{conj}
Let $E_{\alpha\cdot \beta}$ be an exceptional bundle of rank at least three.
Let $n\in \mathbb{N}$ be such that \\$\chi\left(E_{\alpha\cdot \beta} \otimes \mathcal{I}_n\right) =0 $ for a general $\mathcal{I}_Z \in M(1,0,n) = \mathbb{P}^{2[n]}$. Let also $E$ denote the exceptional bundle whose slope satisfies $\mu_E\in [0,1)$ and such that $E_{\alpha\cdot \beta} = E^*(d)$. 

\medskip\noindent
If $\mathcal{I}_Z \in M(1,0,n)$ is destabilized at the second last wall by a morphism $F\rightarrow \mathcal{I}_Z$, i.e. it is in the stable base locus of the divisorial chamber but not in the stable base locus of a movable divisor, then it has the Gaeta resolution \[0\rightarrow  \mathcal{O}_{\PP^2}(-d-2)^r \xrightarrow{M}  \mathcal{O}_{\PP^2}(-d-1)^s \oplus \mathcal{O}_{\PP^2}(-d)^t \rightarrow U \rightarrow 0 \text{ if } \mu_E\in \left[0,\frac{1}{2}\right) \text{ or } \]
\[0\rightarrow   \mathcal{O}_{\PP^2}(-d-2)^r \oplus \mathcal{O}_{\PP^2}(-d-1)^s \xrightarrow{M}   \mathcal{O}_{\PP^2}(-d)^t \rightarrow U \rightarrow 0  \text{ if } \mu_E\in \left[\frac{1}{2},1\right),\]
where the matrix $M$ can in both cases be written in the following form 
\[
M=\left(
\begin{array}{c|c}
0& A(M) \\ \hline
 B(M) & C(M)
\end{array}\right)
\]
and the submatrix $B(M)$ yields the map of the minimal free resolution of $F$.
\end{conj}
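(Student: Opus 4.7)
The plan is to extend the strategy used in Theorem \ref{MAINdetailed} from the primary (last) wall to the second-to-last wall. Since $\chi(E_{\alpha\cdot\beta}\otimes \mathcal{I}_Z)=0$ for the general $\mathcal{I}_Z\in\PP^{2[n]}$, Theorem \ref{MAINdetailed}(c) already describes the Gaeta triangle of the generic ideal, together with the corresponding block form of the resolution map. For an ideal sheaf $\mathcal{I}_Z$ which is destabilized at the second-to-last wall by $F\to \mathcal{I}_Z$, I would first exhibit the induced triangle
\[F\longrightarrow \mathcal{I}_Z\longrightarrow Q[1]\longrightarrow\cdot\]
in $D^b(\PP^2)$, with $Q$ the cone of this map. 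The key preliminary step is to identify, from the Bridgeland-theoretic analysis of \cite{LZ, BM}, the numerical invariants of $F$ and $Q$ and to show that both admit minimal free resolutions supported only in degrees $-d-2,-d-1,-d$ (the same strip as the Gaeta resolution). This amounts to a Beilinson spectral sequence computation with respect to $\{\mathcal{O}_{\PP^2}(-d),E_{\mu_E}(-d),\mathcal{O}_{\PP^2}(-d+1)\}$ (or its analogue when $\mu_E\geq \tfrac12$), together with a count of Betti numbers mirroring Proposition \ref{lem: pos Betti}.

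Assuming those numerics, the next step is to realize the morphism $F\to\mathcal{I}_Z$ as a map of complexes between the minimal free resolutions of $F$ and of $Q$, following verbatim the mapping-cone construction in Proposition \ref{lem: pos matrix}. Because $F$ destabilizes $\mathcal{I}_Z$ rather than being the extremal Bridgeland subobject, the resolution of $Q$ is in general not the resolution of an exceptional bundle, so the block $A(M)$ will be more complicated than in Theorem \ref{MAINdetailed}. However, the block $B(M)$ is, by construction, precisely the map of the minimal free resolution of $F$, which is the content of the conjecture. One should then verify that the glued matrix
\[M=\left(\begin{array}{c|c}0&A(M)\\\hline B(M)&C(M)\end{array}\right)\]
is full rank and that the free resolution it produces is minimal, by the same row-and-column reduction argument as in \S\ref{Positivecase}--\S\ref{sec: exceptional case}.

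The converse direction --- that an ideal with this block form is destabilized at the second-to-last wall --- should follow by reversing the mapping cone: the $B(M)$ block reads off a subcomplex whose cohomology is $F$, and $F\to\mathcal{I}_Z$ then has the correct Mumford slope and discriminant to be the Bridgeland destabilizing subobject, provided one checks that $F$ is itself Bridgeland (semi)stable on the second-to-last wall. This semistability check is where I expect the main obstacle to lie: in contrast to Theorem A, where the destabilizing object $E_{-(\alpha\cdot\beta)}^m$ is exceptional and automatically stable, the general $F$ arising at the second-to-last wall is a more intricate object (a Kronecker-type extension built out of $E_{-\alpha-3}$, $E_{-\beta}$ and possibly $E_{-(\alpha\cdot\beta)}$), and its semistability on the relevant wall is exactly the content of the deeper Bridgeland results in \cite{LZ, BM}.

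Because of this, the proof strategy splits into a formal part (constructing the matrix $M$ with the prescribed block form out of the triangle) and an arithmetic part (matching the walls computed in \cite{LZ, BM} with the numerical data of the Gaeta Betti diagram). The formal part should be routine once the Betti-number count is established, while the arithmetic part is essentially a case analysis over the exceptional slopes $\mu_E\in[0,1)$ and the two sub-cases $\mu_E<\tfrac12$ versus $\mu_E\geq \tfrac12$. In the small-$n$ examples treated in Section \ref{section5} this case analysis is carried out by hand, and a uniform proof of the conjecture would amount to organizing these computations into a single argument parallel to the proof of Theorem \ref{MAINdetailed}.
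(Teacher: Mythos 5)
You should first note that the statement you are proving is stated in the paper as Conjecture \ref{conj}-adjacent material in Section \ref{sec: mov}: the authors give no proof of it at all. They only report that it ``can be verified for any fixed exceptional bundle and the $F$ described in \cite{LZ}'' and that they have checked it numerically for exceptional bundles of rank less than $100$, together with the hand computations for small $n$ in Section \ref{section5}. So there is no argument in the paper to compare yours against; a complete proof would be new mathematics, and what you have written is a plan for one rather than a proof.

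As a plan it follows the right template (the mapping-cone construction of Propositions \ref{lem: pos matrix}--\ref{lem: exc matrix}), but the steps you defer are exactly the substance of the conjecture, so there is a genuine gap. First, the Betti-number count: in Proposition \ref{lem: pos Betti} the identities and sign inequalities (via Lemma \ref{INEQUALITY1} and the endpoint characters) are proved for a \emph{general} $U$, with $F$ and $W$ also general, which is what forces the Gaeta Betti diagram. At the second-to-last wall $\mathcal{I}_Z$ is special --- it lies in $D_{\tiny{Gaeta}}$ and fails the $\zeta$-admissibility of Theorem \ref{MAINdetailed}(c) --- so genericity is unavailable, and the assertion that its Betti diagram does not jump (compare Corollaries \ref{cor: DGaeta} and \ref{cor: DGaeta2}, where jumping does occur in the pure case) is precisely what must be proved; ``assuming those numerics'' assumes the conjecture's main content. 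Second, minimality of the glued resolution: in the paper's argument minimality comes from the blocks $A$, $B$, $C$ being general and full rank, whereas here the map is special by hypothesis, and you must rule out cancellation between summands of the resolutions of $F$ and of the cone $Q$ in the same degree; nothing in your sketch does this. Third, both the identification of the numerical invariants of $F$ at the second-to-last wall and the converse direction (that the block form forces destabilization at that specific wall and that $F$ is semistable there) are deferred wholesale to \cite{LZ,BM}; you candidly flag this as the main obstacle, but it means the proposal reduces the conjecture to itself plus the finite verifications of Section \ref{section5}, rather than proving it.
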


\medskip\noindent
This conjecture can be verified for any fixed exceptional bundle and the $F$ described in \cite{LZ} which yields the edge of the movable cone. We have done so for all exceptional bundles of rank less than 100.
\end{rmk}

\medskip
\section{Explicit Examples}\label{section5}

\noindent
In the previous two sections, we exhibit how minimal free resolutions of sheaves capture important information of the stable base locus decomposition of $\EFF(M(\xi))$ and also contain information of Bridgeland destabilizing objects. In this section, we further exhibit this phenomenon and explicitly show that Question 1 and Question 2 have affirmative answers for the Hilbert scheme of points $\PP^{2[n]}$, with $n\le 6$ and $n=12$.

\medskip\noindent
In order to exhibit that some Bridgeland destabilizing objects of a sheaf $\mathcal{F}$ are contained in the minimal free resolution of $\mathcal{F}$, we need the following definitions. They are motivated by Bridgeland stability and enhance the notion of $\zeta$-map introduced in Section \ref{sec: eff}.

\begin{definition}
If $M$ is a $\zeta$-map and is not a $\zeta'$-map for any $\zeta'$ with \[\frac{r(U) \mathrm{ch_2}(\zeta)-r(\zeta)\mathrm{ch}_2(U)}{-c_1(\zeta)r(U)}>\frac{r(U) \mathrm{ch_2}(\zeta')-r(\zeta')\mathrm{ch}_2(U)}{-c_1(\zeta')r(U)},\] then we say $M$ is a \textit{general destabilizing} $\zeta$-map.
\end{definition}

\begin{definition}\label{generaldestabilizing}
Let $U$ be a coherent sheaf in $M(\xi)$ with minimal free resolution
\[0\rightarrow \bigoplus^r_{i=1}  \mathcal{O}_{\PP^2}(a_i) \xrightarrow{M} \bigoplus^s_{i=1}  \mathcal{O}_{\PP^2}(b_i)\rightarrow U\rightarrow 0.\] Then $U$ is said to be \textit{stability} $\zeta$\textit{-admissible} if the matrix $M$ is a general destabilizing $\zeta$-map. 
\end{definition}

\medskip\noindent
The definition of $\zeta$-admissible in Section \ref{sec: eff} suffices to study the effective cone $\EFF(M(\xi))$, and in such a case, being stability $\zeta$-admissible is equivalent to $\zeta$-admissible. 

\medskip\noindent
We will use the following notation to organize the information of the tables below:

\begin{itemize}
\item The \textit{stable base locus} of an ideal sheaf $U$ is the smallest stable base locus component that contains $U$.\footnote{ We will also list under this name some loci which are not irreducible components of the stable base locus, but arise by imposing vanishing conditions on the map of their minimal free resolution. To avoid confusion, we will denote such loci as follows: $L\subset \mathbb{P}^{2[n]}$.} \smallskip

\item Let $W$ be the Bridgeland wall at which the sheaf $U$ is destabilized. Then, the set of objects that destabilize $U$ at $W$ will be called the \textit{destabilization set} of $U$.\smallskip

\item The \textit{interpolating bundle} of a sheaf $U$ is the minimal-slope stable bundle which is cohomologically orthogonal to $U$. This is the bundle that solves the interpolation problem for vector bundles with respect to $U$.
\end{itemize}

\medskip\noindent
In the tables below, we emphasize that choosing the matrix representation correctly for a map in a minimal free resolution will allow us to read off properties of the map; similar to a situation of linear algebra. In addition to those, we observe in the tables below, that imposing vanishing conditions on the entries of such matrices allows us to recover the Bridgeland destabilizing objects.

\medskip\noindent
Using the notation above, a strengthening of the results from \cite{LZ} would say that the destabilization set of a sheaf determines its minimal base locus component. We conjecture that the destabilization set of a sheaf comes from setting some of the entries of its minimal free resolution equal to zero.

\begin{conj}\label{conj}
Every element of the destabilization set of a sheaf $U$ comes from vanishing conditions on the map of the minimal free resolution of $U$. In particular, knowing the destabilization set of a sheaf is equivalent to knowing all the characters $\zeta$ such that the sheaf is a stability $\zeta$-admissible sheaf.
\end{conj}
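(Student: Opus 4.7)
The plan is to bootstrap Theorem \ref{MAINdetailed} to every Bridgeland wall, not just the one coming from the Gaeta triangle. Given an element of the destabilization set of $U$, we obtain a triangle $F \to U \to G \to \cdot$ in $D^b(\PP^2)$ coming from a wall $W$. The goal is to promote this triangle to a morphism of complexes whose mapping cone is the minimal free resolution of $U$, with a block of zeros forced by the categorical fact that $F$ and $G$ lie in different pieces of a semi-orthogonal stratification once the wall is crossed. Concretely, I would first write right-hand-side and left-hand-side minimal free resolutions $G^\bullet$ and $F_\bullet$ (in the sense of \S\ref{section2.4}) realize the connecting morphism $G \to F[1]$ as an explicit chain map $G^\bullet \to F_\bullet[1]$, and compute the mapping cone; this is exactly the strategy used to prove Propositions \ref{lem: pos matrix}, \ref{lem: neg matrix}, and \ref{lem: exc matrix}, but applied in greater generality.

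The next step is to show that this cone is in fact the minimal free resolution of $U$ and that its matrix $M$ automatically has the block form defining a $\zeta$-map, where $\zeta = \mathrm{ch}(F)$. Minimality splits into a numerical part and a categorical part. For the numerical part, one needs an analogue of Proposition \ref{lem: pos Betti}: the Betti numbers of $U$ should equal the sum of the left-hand-side Betti numbers of $F$ and the right-hand-side Betti numbers of $G$. This reduces to Euler-characteristic inequalities like those established in Lemmas \ref{INEQUALITY1} and \ref{INEQUALITY1 Negative}, and the semistability of $F$ and $G$ supplied by the Bridgeland wall-crossing machinery of \cite{LZ, BM} controls enough of their cohomology for these inequalities to hold. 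For the categorical part, one must check that $\mathrm{Hom}(F_{-2}, G^{0}) = 0$ so that the block labelled $0$ in the definition of a $\zeta$-map really is forced to vanish; this is where the slope conditions satisfied by destabilizing objects at $W$ enter directly. The converse direction, showing that any stability $\zeta$-admissible resolution arises from a destabilizing triangle, is obtained by running the mapping-cone construction backwards: factor $M$ into its block form, read off $F$ from $B(M)$ and $G$ from $A(M)$, and check that the resulting triangle destabilizes $U$ at the predicted wall.

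I expect the main obstacle to be the claim that \emph{every} destabilizing character arises this way: a priori, nothing prevents a Bridgeland wall from being invisible at the level of the Gaeta matrix, for instance if the destabilizing $F$ has a minimal free resolution whose line-bundle summands overlap with those of $G$ in such a way that the block of zeros can be absorbed by row and column reductions. Ruling this out amounts to showing that the Kronecker-module contractions classified in \cite{CHW}, together with their higher-wall analogues, exhaust the combinatorial possibilities of block-upper-triangular decompositions of a matrix with polynomial entries whose cokernel has Chern character $\xi$. A practical order of attack is to verify this wall-by-wall, starting from the second Bridgeland wall on $\PP^{2[n]}$ for tangential $n$ (where Theorem \ref{MOV} already provides the relevant destabilizing object and the block structure on $M$ can be extracted from the short exact sequence $0 \to V \to \mathcal{T}_{\PP^2}(-2s-1)^2 \to \mathcal{I}_Z \to 0$ in the proof of Lemma \ref{InterpolationTan}), then to inductively peel off walls using the fact that once $F$ is removed the residual quotient $G$ is again destabilized by an object amenable to the same analysis, and finally to cross-check against the tables of \S\ref{section5}.
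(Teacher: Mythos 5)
The statement you are trying to prove is Conjecture \ref{conj}: the paper does not prove it. It is stated as an open conjecture, supported only by Theorem \ref{MAINdetailed} (the first wall, coming from the Gaeta triangle), Theorem B (the movable wall for triangular and tangential $n$), and the case-by-case verifications for $\PP^{2[n]}$, $n\le 6$ and $n=12$, in Section \ref{section5}. So your proposal cannot be measured against a proof in the paper; it can only be judged as a proof on its own terms, and as such it is a strategy sketch rather than a proof.

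The plan of promoting each destabilizing triangle $F\to U\to G\to\cdot$ to a chain map of line-bundle resolutions and taking the mapping cone is exactly the mechanism of Propositions \ref{lem: pos matrix}, \ref{lem: neg matrix}, and \ref{lem: exc matrix}, so the overall shape is reasonable, but the two steps you defer are precisely where the conjecture lives. First, the ``analogue of Proposition \ref{lem: pos Betti}'' is not a routine extension: that proposition uses that $U$ is \emph{general} in $M(\xi)$ (hence has the Gaeta Betti diagram), that $F$ is a power of the controlling exceptional bundle, and that $W$ is general in its own moduli space, and the inequalities of Lemmas \ref{INEQUALITY1} and \ref{INEQUALITY1 Negative} are tied to the endpoint Chern characters of that specific exceptional bundle. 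At deeper walls $U$ is by definition non-general, its Betti diagram varies with the stratum (compare $G_1(5)$, $G_2(6)$, $G_3(12)$, etc.\ in Section \ref{section5}), and $F$, $G$ need not be general or even sheaves, so there is no single numerical statement to prove and no argument supplied for whichever one is needed. Second, you correctly flag the possibility that the wall is ``invisible'' because the summands of the resolutions of $F$ and $G$ overlap, but you offer no mechanism to rule this out; in fact the paper's tables show this is the generic situation at deeper walls, where the destabilizing datum is recorded not by a $\zeta$-map block of zeros between summands of different degrees but by the vanishing of individual entries among summands of the \emph{same} degree (the maps $g(5)'$, $g(5)''$, $g(6)'$, $g_1(12)'$, $g(12)'$), which is exactly the refinement from ``$\zeta$-admissible'' to ``stability $\zeta$-admissible'' via Definition \ref{generaldestabilizing}. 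Your claim that $\Hom(F_{-2},G^{0})=0$ follows from slope conditions is false in these cases, and the proposed reduction to ``Kronecker-module contractions exhaust the block-upper-triangular decompositions'' is an assertion, not an argument. So the proposal is a sensible research program, consistent with the evidence the paper assembles, but it does not close the gap that makes this a conjecture rather than a theorem.
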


\medskip\noindent
This conjecture would answer Question 1 in the affirmative.
We will now exhibit that this Conjecture holds for the Hilbert schemes of points $\PP^{2[n]}$, with $n\le 6$ and $n=12$. 
In doing so, we will get any possible destabilizing object $i:F\rightarrow I_Z$ that determines a wall in the stable base locus decomposition of $\EFF(\PP^{2[n]})$ from setting entries of the minimal free resolution map equal to zero. Those destabilizing objects are computed following \cite{LZ} and \cite{ABCH}. Further, by taking the kernel of $i$ in the derived category, we can write the  triangle
$$W \rightarrow F \rightarrow \mathcal{I}_Z\rightarrow \cdot $$ 
and see that the minimal free resolution of $\mathcal{I}_Z$ is the mapping cone of two complexes induced by $W$ and $F$; following the proof of Theorem A. 
At this point, we are in the same situation as in the proof of Theorem B. Hence, we may proceed similarly and find the minimal free resolution of a general bundle $E$ that potentially solves the interpolation problem for $\mathcal{I}_Z$. This exemplifies the interpolation program laid out in the introduction and answers Question 2 in the affirmative for the cases of this section.

\medskip\noindent
We carry out these procedures and write the result in the tables below. 
Such tables exhibit the interaction between minimal free resolutions and Bridgeland destabilizing objects as well as the resolutions of the bundles that satisfy interpolation.
The Appendix A contains the Macaulay2 code that can be used to show that all such bundles in fact satisfy interpolation.

\textbf{Notation:}

\begin{itemize}
\item Let $\mathcal{I}_n$ denote the ideal sheaf of $n$ points on $\PP^2$.
    \item Let $L_k(n)$ be the locus of schemes of length $n$ with a linear subscheme of length at least $k$.
    \item Let $Q_k(n)$ be the locus of schemes of length $n$ with a subscheme of length at least $k$ in a conic.
    \item Let $C_k(n)$ be the locus of schemes of length $n$ with a subscheme of length at least $k$ in a cubic.
    \item Let $G_i(n)$ be the locus of subschemes of length $n$ with a fixed Betti diagram. This is a syzygy locus.
\item Primary chambers are those whose boundary rays have non negative coefficient on the divisor $H$ in the basis $H$ and $B$ of the N\'eron-Severi space.
\end{itemize}

    \medskip\noindent
We will use the same notation to represent stable base loci as in \cite{ABCH} in addition to the convention that a horizontal solid line in all the tables below represent a wall in the SBLD; while the horizontal dotted lines separate loci with different sets of minimal free resolutions, or destabilization set, but which are in the same chamber of the SBLD.

\medskip\noindent

\subsection*{Destabilizing objects from minimal free resolutions for $\PP^{2[2]}$}
\medskip\noindent
There is only one Betti diagram for any $\mathcal{I}_Z\in \PP^{2[2]}$:
\begin{align*}G(2)&:0 \to \mathcal{O}_{\PP^2}(-3) \overset{M}{\longrightarrow}\mathcal{O}_{\PP^2}(-2)\oplus\mathcal{O}_{\PP^2}(-1) \to \mathcal{I}_2 \to 0.\end{align*}
Moreover, there is no matrix representation of the map $M$ with a zero entry.

\medskip\noindent
By \cite{ABCH}, the only Bridgeland destabilizing object is $\mathcal{O}_{\PP^2}(-1)$, and it is clear that a sheaf is stability $\zeta$-admissible if and only if it is destabilized by a sheaf with character $\zeta$.

\medskip\noindent
There are no primary chambers with nonempty stable base locus.
Out side the effective cone $\EFF(\PP^{2[2]})$, we say, for convenience, that the base locus is all of $\mathbb{P}^{2[2]}$; which is precisely the locus of all the sub schemes with Betti diagram $G(2)$.
\begin{center}
\begin{tabular}{c|c|c|c} 
Minimal free resolution & Destabilization set & Stable base locus  & Interpolating bundle \\  \hline
$G(2)$ & $\mathcal{O}_{\PP^2}(-1)$ & $\mathbb{P}^{2[2]}$ & -- \\ \hline
\end{tabular}
\end{center}



\medskip
\subsection*{Destabilizing objects from minimal free resolutions for $\PP^{2[3]}$}
\noindent
There are two Betti diagrams we can chose from in order to write the minimal free resolution of any $\mathcal{I}_Z\in \PP^{2[3]}$. They are the following:
\begin{align*}
G_1(3)&:0 \to \mathcal{O}_{\PP^2}(-4)\overset{N}{\longrightarrow}\mathcal{O}_{\PP^2}(-3)\oplus\mathcal{O}_{\PP^2}(-1) \to \mathcal{I}_3 \to 0,\\
G(3)&:0 \to \mathcal{O}_{\PP^2}(-3)^2\overset{M}{\longrightarrow} \mathcal{O}_{\PP^2}(-2)^3 \to \mathcal{I}_3 \to 0.
\end{align*}
There is no matrix representation of the map $N$ which admits an entry equal to zero. However, we can chose a representation of the map $M$, in $G(3)$, which up to row and column reduction generically has zeros on the main diagonal. This situation gives rise to the destabilizing set described below.


\medskip\noindent
By \cite{ABCH}, the Bridgeland destabilizing objects are $\mathcal{O}_{\PP^2}(-1)$, $\mathcal{I}_1(-1)$, $\mathcal{O}_{\PP^2}(-2)$, and $E_{\frac{1}{2}}(-2)$.
Note \cite{ABCH} sometimes excluded cases of higher rank sheaves which yield walls already found; we will not do so.
If an ideal sheaf is destabilized by $E_{\frac{1}{2}}(-2)$, it is also destabilized by $\mathcal{O}_{\PP^2}(-2)$ and $\mathcal{I}_1(-1)$; and vice versa. Ideal sheaves with destabilization set $\mathcal{O}_{\PP^2}(-1)$ and $\{E_{\frac{1}{2}}(-2),\mathcal{I}_1(-1),\mathcal{O}_{\PP^2}(-2)\}$ each have resolution $G_1(3)$ and $G(3)$, respectively.
It is clear that a sheaf is stability $\zeta$-admissible if and only if it is destabilized by a sheaf with character $\zeta$ in all cases.

\medskip\noindent
There is only one primary chamber with nonempty stable base locus. The base locus of that chamber is $L_3(3)=G_1(3)$.
Beyond the primary edge of $\EFF(\PP^{2[3]})$, outside of the effective cone, the base locus is all of $\mathbb{P}^{2[3]}$.

\begin{center}
\begin{tabular}{c|c|c|c} 
Minimal free resolution & Destabilization set & Stable base locus & Interpolating bundle \\  \hline
$G_1(3)$ & $\OOT(-1)$ & $L_3$ & $\mathrm{coker}(\OOT(-2)^4\to\OOT^6)$\\[1.5mm] \hline
$G(3)$ & $\{E_{\frac{1}{2}}(-2),\mathcal{I}_1(-1),\OOT(-2)\}$ & $\mathbb{P}^{2[3]}$ & $\OOT(1)$\\ \hline
\end{tabular}
\end{center}

\bigskip
\subsection*{Destabilizing objects from minimal free resolutions for $\PP^{2[4]}$}
\noindent
There are 3 Betti diagrams we can chose from in order to write the minimal free resolution of any $\mathcal{I}_Z\in \PP^{2[4]}$. They are the following:
\begin{align*}
    G_2(4):0& \to \mathcal{O}_{\PP^2}(-5)\to\mathcal{O}_{\PP^2}(-4)\oplus\mathcal{O}_{\PP^2}(-1) \to \mathcal{I}_4 \to 0,\\
  G_1(4):0 &\to \mathcal{O}_{\PP^2}(-4)\oplus \mathcal{O}_{\PP^2}(-3)\to \mathcal{O}_{\PP^2}(-3)\oplus \mathcal{O}_{\PP^2}(-2)^2 \to \mathcal{I}_4 \to 0,\\
  G(4):0 &\to \mathcal{O}_{\PP^2}(-4)\to \mathcal{O}_{\PP^2}(-2)^2 \to \mathcal{I}_4 \to 0.
\end{align*}
Note that a map of the diagram $G_1(4)$ admits a matrix representation which contains two zero-entries. Indeed, $\OOT(-3)$ to itself and from $\OOT(-4)$ to a copy of $\OOT(-2)$. Except for this case, if $\mathcal{I}_Z$ denotes a generic ideal sheaf which fits into one of the diagrams above, then the map in its minimal free resolution admits no matrix representation with a zero-entry.

\medskip\noindent
By \cite{ABCH}, the Bridgeland destabilizing objects are $\mathcal{O}_{\PP^2}(-1)$, $\mathcal{I}_1(-1)$, $\mathcal{O}_{\PP^2}(-2)$, and $\mathcal{O}_{\PP^2}(-2)^2$. 
An ideal sheaf is destabilized by $\mathcal{O}_{\PP^2}(-2)$ if and only if it is destabilized by $\mathcal{O}_{\PP^2}(-2)^2$.
Ideal sheaves destabilized by $\mathcal{O}_{\PP^2}(-1)$, $\mathcal{I}_1(-1)$, and $\{\mathcal{O}_{\PP^2}(-2),\mathcal{O}_{\PP^2}(-2)^2\}$ each have resolution $G_2(4)$, $G_1(4)$, and $G(4)$, respectively.
It is clear that a sheaf is stability $\zeta$-admissible if and only if it is destabilized by a sheaf with character $\zeta$ in all cases.

\medskip\noindent
The stable base locus decomposition of $\EFF(\PP^{2[4]})$ has two primary cambers with nonempty base locus.
The base locus of the chamber closest to the ample cone $\mathrm{Amp}(\PP^{2[4]})$ is where the four points are collinear; which is precisely the syzygy locus $G_2(4)$.
The base locus of the primary extremal chamber is where at least three of the four points are collinear; this is the union of the syzygy locus $G_2(4)$ and $G_1(4)$.
Beyond the primary edge of $\EFF(\PP^{2[4]})$, the base locus is all of $\mathbb{P}^{2[4]}$. 

\begin{center}

\begin{tabular}{c|c|c|c} 

Minimal free resolution & Destabilization set & Stable base locus & Interpolating bundle\\  \hline
$G_2(4)$ & $\OOT(-1)$ & $L_4$ & $\mathrm{coker}(\OOT(-1)^6\to\OOT^8)$\\[1mm] \hline
$G_1(4)$ & $\mathcal{I}_1(-1)$ & $L_3$ & $\mathrm{coker}(\OOT(-1)^2\to\OOT^2\oplus\OOT(1)^2)$\\[1mm] \hline
$G(4)$ & $\{\OOT(-2),\OOT(-2)^2\}$ & $\mathbb{P}^{2[4]}$ & $\mathrm{coker}(\OOT\to\OOT(1)^3)$\\\hline
\end{tabular}
\end{center}


\bigskip
\subsection*{Destabilizing objects from minimal free resolutions for $\PP^{2[5]}$}
\noindent
There are 3 Betti diagrams we can chose from in order to write the minimal free resolution of any $\mathcal{I}_Z\in \PP^{2[5]}$. They are the following:
\begin{align*}
G_2(5)&:0 \to \mathcal{O}_{\PP^2}(-6)\to\mathcal{O}_{\PP^2}(-5)\oplus\mathcal{O}_{\PP^2}(-1) \to \mathcal{I}_5 \to 0,\\
G_1(5)&:0 \to \mathcal{O}_{\PP^2}(-5)\oplus \mathcal{O}_{\PP^2}(-3)\to \mathcal{O}_{\PP^2}(-4)\oplus \mathcal{O}_{\PP^2}(-2)^2 \to \mathcal{I}_5 \to 0, \\
G(5)&:0 \to \mathcal{O}_{\PP^2}(-4)^2\to \mathcal{O}_{\PP^2}(-3)^2 \oplus \mathcal{O}_{\PP^2}(-2)\to \mathcal{I}_5 \to 0.
\end{align*}
Let us denote map for the Betti diagram $G(5)$ by the matrix $g=(a_{ij})$, where $a_{11},a_{12},a_{21},a_{22}$ are degree 1 polynomials and $a_{31},a_{32}$ are of degree 2. The minimal free resolution of a general sheaf $Z$ admits a map with a zero in one of the degree 2 entries. Further, if $Z$ has a length 3 subscheme contained in a line, we can, for example, impose zeroes in the positions $a_{21},a_{32}$:
$$
g(5)'=\left(
\begin{array}{cc}
*&* \\ 
 0&*\\
 * &0
\end{array}\right).$$

\noindent
We will denote by $\{G(5), g(5)'\}$ the locus of ideal sheaves whose minimal free resolution has Betti diagram $G(5)$ and map $g(5)'$. If $Z$ contains 2 different length 3 subschemes, each contained in a line, its defining matrix admits an alternative representation with zeroes on $a_{12}$ and $a_{21}$:
$$ 
g(5)''=\left(
\begin{array}{cc}
*&0 \\ 
 0&*\\
 *&*
\end{array}\right).
$$
\noindent
Denote by $\{G(5),g(5)''\}$ the locus of such ideal sheaves. The general element with resolution $G_1(5)$ represents a scheme with a degree 4 linear subscheme. It admits the zero in position $a_{12}$, together with a zero in the position $a_{21}$. There are no further vanishings and the same is true for $G_2(5)$.

\medskip\noindent
By \cite{ABCH}, the Bridgeland destabilizing objects are $\mathcal{O}_{\PP^2}(-1)$, $\mathcal{I}_1(-1)$, $\mathcal{O}_{\PP^2}(-2)$, and $\mathcal{I}_2(-1)$.
Note, if an ideal sheaf is destabilized by $\mathcal{I}_2(-1)$, it is also destabilized by $\mathcal{O}_{\PP^2}(-2)$.
Objects destabilized by $\mathcal{O}_{\PP^2}(-1)$, $\mathcal{I}_1(-1)$, $\{\mathcal{I}_2(-1),\mathcal{O}_{\PP^2}(-2)\}$, and $\mathcal{O}_{\PP^2}(-2)$ each have resolution $G_2(5)$, $G_1(5)$, $\{G(5)'', G(5)'\}$, and $G(5)$, respectively.
Finally, $G(5)''$ corresponds to the jumping of the $\hom(\mathcal{I}_2(-1),\mathcal{I}_5)$ from 1 to 2 so it does not add any destabilizing objects.
However, this does show up geometrically as the five points having two sets of three collinear points.
It is clear that if a sheaf is stability $\zeta$-admissible if and only if it is destabilized by a sheaf with character $\zeta$ in all cases.

\medskip\noindent
The stable base locus decomposition of $\EFF(\PP^{2[5]})$ has two primary chambers with nonempty base locus.
The base locus of the chamber closest to the ample cone $\mathrm{Amp}(\PP^{2[5]})$ is where the five points are collinear, which is precisely the syzygy locus $G_2(5)$.
The base locus of the other primary chamber is where at least four of the five points are collinear; this is precisely the union of the syzygy locus $G_1(5)$ and $G_2(5)$. Beyond the primary edge of $\EFF(\PP^{2[5]})$, the base locus is all of $\mathbb{P}^{2[5]}$. 

\medskip\noindent
Note the sheaves in the locus $\{G(5),g(5)'\}$ have the geometric property that three of the five points are collinear, while the sheaves in $\{G(5),g(5)''\}$ have the property that there are two sets of three points which are collinear, i.e. the five points lie on the union of two lines and include the intersection point of the line as one of the five points.
We will use $L_{3,3}$ to denote this final locus.

\begin{center}
\begin{tabular}{c|c|c|c} 
Minimal free resolution & Destabilization set & Stable base locus & Interpolating bundle\\  \hline
$G_2(5)$ & $\OOT(-1)$ & $L_5$ & $\mathrm{coker}(\OOT(-1)^8\to\OOT^{10})$\\[1.5mm] \hline
$G_1(5)$ & $\mathcal{I}_1(-1)$ & $L_4$ & $\mathrm{coker}(\OOT(-1)^4\to\OOT^4\oplus\OOT(1)^2)$\\[1.5mm] \hline
$G(5),g(5)''$ & $\mathcal{I}_2(-1),\OOT(-2)$  & $L_{3,3}\subset \mathbb{P}^{2[5]}$ & $\mathrm{coker}(\OOT^2\to\OOT(1)^4)$ \\ \hdashline
$G(5),g(5)'$ & $\mathcal{I}_2(-1),\OOT(-2)$ & $L_3\subset \mathbb{P}^{2[5]}$ & $\mathrm{coker}(\OOT^2\to\OOT(1)^4)$\\ \hdashline
$G(5)$ & $\OOT(-2)$ & $\mathbb{P}^{2[5]}$ & $\mathrm{coker}(\OOT^2\to\OOT(1)^4)$\\ \hline
\end{tabular}
\end{center}


\bigskip
\subsection*{Destabilizing objects from minimal free resolutions for $\PP^{2[6]}$}
\noindent
There are 5 Betti diagrams we can chose from in order to write the minimal free resolution of any $\mathcal{I}_Z\in \PP^{2[6]}$. They are the following:
\begin{align*}
G_4(6)&:0 \to \OOT(-7)\to\OOT(-6)\oplus\OOT(-1) \to \mathcal{I}_6 \to 0,\\
G_3(6)&:0 \to \OOT(-6)\oplus\OOT(-3)\to \OOT(-5)\oplus \OOT(-2)^2 \to \mathcal{I}_6 \to 0,\\
G_2(6)&:0 \to \OOT(-5)\oplus \OOT(-4)\to\OOT(-4)\oplus\OOT(-3)\oplus \OOT(-2) \to \mathcal{I}_6 \to 0,\\
G_1(6)&:0 \to \OOT(-5)\to \OOT(-3)\oplus \OOT(-2) \to \mathcal{I}_6 \to 0,\\
G(6)&:0 \to \OOT(-4)^3\to \OOT(-3)^4 \to \mathcal{I}_6 \to 0.
\end{align*}

\medskip\noindent
The number of distinct maps grows quickly, then we will only list the maps that give rise to the destabilization set and stable base locus components.  The relevant maps that come from the Betti diagram $G(6)$ arise by writing a 5th zero anywhere in the matrix in addition to the 4 zeros, one in each row, one obtains by row-column reduction on the generic matrix. Let us denote such a map by $g(6)'$. For example:

$$
g(6)'=\left(
\begin{array}{ccc}
0&*&* \\ 
 *&0&*\\
 *&*&0\\
 0&*&0 \\ 
\end{array}\right).
$$

\medskip\noindent
By \cite{ABCH}, the  Bridgeland destabilizing objects are $\OOT(-1)$,  $\mathcal{I}_1(-1)$,   $\mathcal{I}_2(-1)$, $\OOT$,  $\mathcal{I}_1(-2)$, $\mathcal{I}_3(-1)$, $\OOT(-3)$, $E_{\frac{1}{2}}(-3)$, $F_{(2,-2,1)}$, $\OOT(-3)^2$, $\OOT(-3)^3$, and $\OOT(-3)^4$. 
Here $F_{(2,-2,1)}$ denotes a stable bundle with rank $2$, slope $-2$, and discriminant $1$.
Note, if an ideal sheaf is destabilized by any of $E_{\frac{1}{2}}(-3)$, $F_{(2,-2,1)}$, $\OOT(-3)$, $\OOT(-3)^2$, $\OOT(-3)^3$, $\OOT(-3)^4$, $\mathcal{I}_1(-2)$, it is destabilized by all of those. 
Those bundles also destabilize any bundle destabilized by $\mathcal{I}_1(-2)$.
Sheaves destabilized by only all of those bundles but not $\mathcal{I}_1(-2)$ have resolution $G(6)$, while those destabilized by that set and $\mathcal{I}_1(-2)$ have resolution $\{G(6),g(6)'\}$.
Similarly, objects destabilized by $\mathcal{O}_{\PP^2}(-1)$, $\mathcal{I}_1(-1)$,$\mathcal{I}_2(-1)$ and $\mathcal{O}_{\PP^2}(-2)$ each have resolution $G_4(5)$, $G_3(5)$, $G_2(5)$, and $G_1(5)$, respectively.
It is clear that if a sheaf is stability $\zeta$-admissible if and only if it is destabilized by a sheaf with character $\zeta$ in all cases.

\medskip\noindent
The stable base locus decomposition of $\EFF(\PP^{2[6]})$ has four primary chambers with nonempty base locus.
The base locus of these chambers from closest to the ample cone $\mathrm{Amp}(\PP^{2[6]})$ are where the six points are collinear, where the five of the six points are collinear, where the four of the six points are collinear, and where the six points lie on a conic, which is precisely the syzygy loci $G_4(5)$, $G_3(5)$, $G_2(5)$, and $G_1(5)$, respectively.
Beyond the primary edge of $\EFF(\PP^{2[5]})$, the base locus is all of $\mathbb{P}^{2[6]}$.  Note the sheaves in the locus $\{G(6), g(6)'\}$ have the geometric property that three of the six points are collinear.

\medskip\noindent
\begin{center}
\begin{tabular}{c|c|c|c} 
Min free resolution & Destabilization set & SBL & Interpolating bundle\\  \hline
$G_4(6)$ & $\OOT(-1)$ & $L_6$ & $\mathrm{coker}(\OOT(-1)^{10}\to\OOT^{12})$ \\[1.5mm] \hline
$G_3(6)$ & $\mathcal{I}_1(-1)$ & $L_5$ & $\mathrm{coker}(\OOT(-1)^6\to\OO^6\oplus\OOT(1)^2)$\\[1.5mm] \hline
$G_2(6)$ 
& $\mathcal{I}_2(-1)$ &  $L_4$ & $\mathrm{coker}(\OOT(-1)^2\to\OOT(1)^4)$\\[1.5mm] \hline
$G_1(6)$ & $\OOT(-2)$ & $Q_6$ & $\mathrm{coker}(\OOT^3\to\OOT(1)^5)$\\[1.5mm] \hline
$G(6),g(6)'$ 
& $\{E_{\frac{1}{2}}(-3),F_{(2,-2,1)},\OOT(-3),\OOT(-3)^2,$ &  $L_3 \subset \mathbb{P}^{2[6]}$ & $\OOT(2)$\\
& $\OOT(-3)^3,\OOT(-3)^4,\mathcal{I}_1(-2),\mathcal{I}_3(-1)\}$ & \\ \hdashline
$G(6)$ 
& $\{E_{\frac{1}{2}}(-3),F_{(2,-2,1)},\OOT(-3),\OOT(-3)^2,$ & $\mathbb{P}^{2[6]}$ & $\OOT(2)$\\
& $\OOT(-3)^3,\OOT(-3)^4,\mathcal{I}_1(-2)\}$ & \\ \hline
%
\end{tabular}
\end{center}

\medskip
\subsection*{Destabilizing objects from minimal free resolutions for $\PP^{2[12]}$}
\noindent
This example exhibits that the minimal free resolutions recover the destabilization set also when the stable base locus decomposition has reducible base loci. 
We will see that such components correspond to distinct destabilization set.
We will omit the analysis for chambers whose base loci consists of subschemes with $k$ collinear points for $k \ge 6$; those follow from \cite{ABCH} and they have minimal free resolutions which can be related to the destabilization set in a clear way.

\medskip\noindent
There are 10 relevant Betti diagrams for the minimal free resolution of points $\mathcal{I}_Z\in \PP^{2[12]}$ which are the following. Other diagrams correspond to either chambers we have omitted or to special loci inside the irreducible components of the stable base locus decomposition.
\begin{align*}
G_{9}(12): 0 & \to\OOT(-8)\oplus\OOT(-5)^2\to\OOT(-7)\oplus\OOT(-4)^2\oplus\OOT(-3)\to\mathcal{I}_{12}\to0,\\
G_{8}(12): 0& \to \OOT(-8) \to \OOT(-6) \oplus \OOT(-2) \to \mathcal{I}_{12} \to 0,\\
G_{7}(12): 0& \to \OOT(-7)^2 \oplus \OOT(-4) \to  \OOT(-6)^2 \oplus \OOT(-3)^2 \to \mathcal{I}_{12} \to 0,\\
G_{6}(12): 0& \to \OOT(-7)\oplus \OOT(-5)^3 \to \OOT(-6) \oplus \OOT(-4)^4 \to \mathcal{I}_{12} \to 0,\\
G_{5}(12): 0& \to \OOT(-7) \oplus \OOT(-5)\to \OOT(-5) \oplus \OOT(-4)\oplus \OOT(-3) \to \mathcal{I}_{12} \to 0,\\
G_{4}(12): 0& \to \OOT(-6)^3 \to  \OOT(-5)^3 \oplus \OOT(-3) \to \mathcal{I}_{12} \to 0,\\
G_{3}(12): 0& \to \OOT(-6)^2\oplus \OOT(-5)^2 \to \OOT(-5)^2 \oplus \OOT(-4)^3 \to \mathcal{I}_{12} \to 0,\\
G_{2}(12): 0& \to \OOT(-8)\oplus \OOT(-5)^2 \to \OOT(-7) \oplus \OOT(-4)^2\oplus \OOT(-3)  \to \mathcal{I}_{12} \to 0,\\
G_{1}(12): 0& \to \OOT(-6)^2\oplus \OOT(-5) \to \OOT(-5) \oplus \OOT(-4)^3 \to \mathcal{I}_{12} \to 0,\\
G(12): 0& \to \OOT(-6)^2\to \OOT(-4)^3 \to \mathcal{I}_{12} \to 0.
\end{align*}
\noindent 
A relevant map occurs in the diagram $G_1(12)$ and is the map whose entry from $\OOT(-5)$ to one copy of $\OOT(-4)$ is zero. We denote such a map by $g_1(12)'$. Another relevant map occurs in the diagram $G(12)$ is defined by making one of the entries is zero in the general matrix; we denote it by $g(12)'$

\medskip\noindent
There are destabilizing objects denoted by $F_{(2,-3,\frac{3}{2})}$ and $F_{(2,-3,1)}$ which are general stable bundles with their log Chern character as a sub index.

\medskip
\begin{center}
\begin{tabular}{c|c|c|c} 
Min free resolution & Destabilization set & SBL & Interpolating bundle\\  \hline
$G_9(12)$ & $\mathcal{I}_5(-1)$ & $L_7$ & $\mathrm{coker}(\OOT(1)^2\oplus\OOT^6\to\OOT(1)^{10})$\\[1.5mm] \hline 
$G_8(12)$ & $\OOT(-2)$ & $Q_{12}$ & $\mathrm{coker}(\OOT^9\to\OOT(1)^{11})$\\[1.5mm] \hline
$G_{7}(12)$ & $\mathcal{I}_1(-2)$ & $Q_{11}$ & $\mathrm{coker}(\OOT^6\to\OOT(1)^6\oplus\OOT(2)^2)$\\ \hdashline
$G_{6}(12)$ & $\mathcal{I}_6(-1)$ & $L_{6}$ & $\mathrm{coker}(\OOT^6\to\OOT(1)^6\oplus\OOT(2)^2)$\\[1.5mm] \hline
$G_{5}(12)$ & $\mathcal{I}_2(-2)$ & $Q_{10}$ & $\mathrm{coker}(\OOT^3\to\OOT(1)\oplus\OOT(2)^4)$\\[1.5mm] \hline
$G_{4}(12)$ & $\OOT(-3)$ & $C_{12}$ & $\mathrm{coker}(\OOT(1)^4\to\OOT(2)^6)$\\ \hdashline
$G_{3}(12)$ & $\mathcal{I}_3(-2)$ & $Q_{9}$ & $\mathrm{coker}(\OOT(1)^4\to\OOT(2)^6)$\\ \hdashline
$G_{2}(12)$ & $\mathcal{I}_5(-1)$ & $L_{5}$ & $\mathrm{coker}(\OOT(1)^4\to\OOT(2)^6)$\\[1.5mm] \hline
$G_{1}(12), g_1(12)'$ & $\mathcal{I}_1(-3)$ & $C_{11}$ & $\mathrm{coker}(\OOT(1)^2\to\OOT(2)^2\oplus\OOT(3)^3)$\\[1.5mm] \hline
$G_{1}(12)$ & $\mathcal{T}_{\mathbb{P}^2}(-5)$ & $D_{\mathcal{T}_{\mathbb{P}^2}(-5)}$ & $\mathrm{coker}(\OOT(1)^2\to\OOT(3)^9)$\\[1.5mm] \hline
$G(12), g(12)'$ & $\{\OOT(-4),\OOT(-4)^2,\OOT{O}(-4)^3$ &  & $\mathrm{coker}(\OOT(2)\to\OOT(3)^3)$\\[1.5mm]
 & $F_{(2,-3,\frac{3}{2})},\mathcal{I}_4(-2)\}$ &$Q_8 \subset \mathbb{P}^{2[12]}$ & \\[1.5mm]
\hdashline
$G(12)$ & $\{\OOT(-4),\OOT(-4)^2,\OOT(-4)^3,$ &  & $\mathrm{coker}(\OOT(2)\to\OOT(3)^3)$\\
 & $F_{(2,-3,\frac{3}{2})}\}$ & $\mathbb{P}^{2[12]}$ & $\mathrm{coker}(\OOT(2)\to\OOT(3)^3)$\\[1.5mm] \hline
\end{tabular}
\end{center}

\medskip\noindent
In each chamber, solid lines denote chambers while dotted lines do not, we see that the minimal free resolution contains the destabilization set; which itself determines the stable base locus decomposition, including when there are many components within a chamber. 
Similarly, a sheaf is stability $\zeta$-admissible if and only if it is destabilized by a sheaf with character $\zeta$ in all cases.

\section*{Appendix A: Computations in Macaulay2}

\noindent 
This appendix contains an implementation, using Macaulay2, for proving Lemma \ref{InterpolationTan} for the cases $s\leq5$. The cases $s\leq 4$ can be treated uniformly so we only address $s=4$. In the case $s=5$, an optimization is possible using the resolution (\ref{eq: esp res}) in Lemma \ref{InterpolationTan}. Note, this code can also be used to prove interpolation for a vector bundle with respect to a sheaf more generally.

\subsection*{Case $s=4$.}
\begin{verbatim}
PP2 = QQ[x, y, z];
--First we define the bundle M
B2 = flatten entries basis(2, PP2);
C4 = for i in 0..18 list (random(PP2^{2}, PP2^{0}))_0_0;
A = matrix for i in 0..18 list
	for j in 0..3 list(
		if j == 3 then C4_i
		else if j == floor(i/6) then B2_(i-6*j)
		else 0
	);
M = sheaf coker map(PP2^{7}^19, PP2^{5}^4, A);
--Next we define a general ideal in J
B = random(PP2^{-9,5:-8}, PP2^{4:-10,-9});
--This line is necessary to ensure the map is minimal
B = B - sub(B, {x=>0, y=>0, z=>0});

I = fittingIdeal(1, coker B);
--The result should be 0
HH^0(M ** sheaf module I)
\end{verbatim}

\medskip\noindent
    Running the code above in the Institute of Mathematics' computer cluster at UNAM took 62830 seconds. The cases $s<4$ are considerably faster.

\subsection*{Case $s=5$.}
In this case, instead of computing the cohomology groups of $M\otimes\mathcal{I}_Z$, we compute the cohomology of $M\otimes V$, where $V$ is the bundle (\ref{eq: esp res}) in Lemma \ref{InterpolationTan}.
\begin{verbatim}
    PP2 = QQ[x, y, z]

B2 = flatten entries basis(2, PP2);
C4 = for i in 0..23 list (random(PP2^{2}, PP2^{0}))_0_0;
A = matrix for i in 0..23 list
	for j in 0..4 list(
		if j == 4 then C4_i
		else if j == floor(i/6) then B2_(i-6*j)
		else 0
	);
M = sheaf coker map(PP2^{9}^24, PP2^{7}^5, A);

V = sheaf ker random(PP2^{-11}^2, PP2^{-12}^5);

time HH^1(M ** V)
\end{verbatim}

\bigskip


\begin{thebibliography}{MNOP06}

\bibitem[ABCH13]{ABCH} Arcara, D.; Bertram, A.; Coskun, I.; Huizenga, J.: The minimal model program for the Hilbert scheme of points on $\PP^2$ and Bridgeland stability. \textit{Adv. Math.} 235 (2013), 580--626.

\bibitem[BM14]{BM} Bayer, A. and Macr\`i, E.: Projectivity and birational geometry of Bridgeland moduli spaces.
\textit{J. Amer. Math. Soc.} 
vol. 27 (2014), no. 3, 707--752

\bibitem[BS92]{BS} Bohnhorst, G. and Spindler, H. The stability of certain vector bundles on ${\bf P}^n$. \textit{Complex algebraic varieties (Bayreuth, 1990)}, 39--50, Lecture Notes in Math., 1507, Springer, Berlin, 1992.


\bibitem[CH13]{HC13} Coskun, I and Huizenga, J.:
     Interpolation, {B}ridgeland stability and monomial schemes in the plane. \textit{Journal de Math\'{e}matiques Pures et Appliqu\'{e}es. Neuvi\`eme S\'{e}rie}.
Vol. 102, 2014. 930--971.

\bibitem[CHK]{CHK} Coskun, I. and Huizenga, J. and Kopper, J.: The Cohomology of General Tensor Products of Vector Bundles on the Projective Plane. arXiv preprint arXiv:2008.10695 (2020).

\bibitem[CHW17]{CHW} Coskun, I. and Huizenga, J. and Wolf, M.: The effective cone of the moduli space of sheaves on the plane, \textit{Journal of the European Mathematical Society} vol. 19 no. 5 (2017), 1421--1467.

\bibitem[CM03]{CM} Dionisi, C. and Maggesi, M.: Minimal resolution of general stable rank-2 vector bundles on $\mathbb{P}^2$.
\textit{Boll. Unione Mat. Ital. Sez. B Artic. Ric. Mat. (8)} vol. 6 (2003), no. 1, 151--160.

\bibitem[DL85]{DL} Dr\'ezet, J.-M. and Le Potier, J.: Fibr\`es stables et fibr\`es exceptionnels sur P2, Ann. Sci. \`Ecole Norm. Sup.(4) 18 (1985), no. 2, 193--243.

\bibitem[ELMNP06]{ELMNP} Ein, L., Lazarsfeld, R., Musta\c{t}\u{a}, M., Nakamaye, M., and Popa, M.: Asymptotic invariants of base loci.
\textit{Ann. Inst. Fourier (Grenoble)} 
vol. 56 (2006), no. 6, 1701--1734.

\bibitem[Ei05]{E} Eisenbud, D.: The Geometry of Syzygies. \textit{Graduate Texts in Mathematics}, volume 229. Springer, New York, 2005.

\bibitem[Hu14]{Hui14} Huizenga, J.:
Effective divisors on the {H}ilbert scheme of points in the plane and interpolation for stable bundles.
\textit{Journal of Algebraic Geometry}.
    Vol. 25, 2016. 19--75.
    
\bibitem[Hu13]{Hui13}
Huizenga, J.: Restrictions of Steiner bundles and divisors on the Hilbert scheme of  points in the plane. \textit{International Mathematics Research Notices}, vol. 21 (2013), 4829--4873.

\bibitem[L04]{Lazarsfeld} Lazarsfeld, R.: Positivity in algebraic geometry. I. \textit{Ergebnisse der Mathematik und ihrer
Grenzgebiete : a series of modern surveys in mathematics. Folge 3}, volume  48. Springer-Verlag, Berlin, 2004.


\bibitem[LZ]{LZ} Li, C. and X. Zhao: Birational models of moduli spaces of coherent sheaves on the projective plane.  To appear in \textit{Geometry and Topology}.

\bibitem[LoHR21]{RYANLOZANOHUERTA} Lozano Huerta, C. and Ryan, T.: On the position of nodes of plane curves,
  \textit{Bulletin of the Australian Mathematical Society}, Vol. 103, 2021. 62--68.

\bibitem[LP97]{LP} Le Potier, J., Lectures on vector bundles, translated by A. Maciocia, \textit{Cambridge Studies in Advanced Mathematics}, 54, Cambridge Univ. Press, Cambridge, 1997.


\bibitem[CO07]{CO} Campillo, A. and Olivares, J.: Special subschemes of the scheme of singularities of a plane foliation, \textit{Comptes Rendus Mathematique}, Serie I, 344 (2007). 581--585.


 

\end{thebibliography}
\end{document}